\DeclareSymbolFont{symbolsC}{U}{pxsyc}{m}{n}
\DeclareMathSymbol{\coloneqq}{\mathrel}{symbolsC}{"42}
\newtheorem{thm}{Theorem}
\newtheorem{prop}{Proposition}
\newtheorem{rem}[thm]{Remark}
\newtheorem{applem}{Lemma}
\newtheorem{apprem}{Remark}
\def\bbb[#1]{\boldsymbol{#1}}
\def\mmm[#1]{\mathcal{#1}}
\def\sss[#1]{\mathscr{#1}}
\def\R{\mathbb{R}} 
\def\Z{\mathbb{Z}}
\def\N{\mathbb{N}} 
\def\dd{ \,\rmd}
\def\p{\partial}
\def\lap{\Delta}
\def\e{\varepsilon}
\def\dD{\p_D \Omega}
\def\dN{\p_N \Omega}
\begin{document}

\title[{Binary recovery via phase field regularization}]{Binary recovery via phase field regularization for first traveltime tomography}
\author{Oliver R. A. Dunbar $^1$, Charles M. Elliott$^2$} 
\address{Mathematics Institute, University of Warwick, UK, CV4 7AL} 
\ead{$^1$ o.dunbar.1@warwick.ac.uk, $^2$ c.m.elliott@warwick.ac.uk}

\begin{abstract}
We propose a double obstacle phase field methodology for binary recovery of the slowness function of an Eikonal equation found in first traveltime tomography. We treat the inverse problem as an optimization problem with quadratic misfit functional added to a phase field relaxation of the perimeter penalization functional. Our approach yields solutions as we account for well posedness of the forward problem by choosing regular priors. We obtain a convergent finite difference and mixed finite element based discretization and a well defined descent scheme by accounting for the non-differentiability of the forward problem. We validate the phase field technique with a $\Gamma$ -- convergence result and numerically by conducting parameter studies for the scheme, and by applying it to a variety of test problems with different geometries, boundary conditions, and source -- receiver locations.

\end{abstract}

\vspace{2pc}


\section{Introduction}\label{sec:intro}

We consider recovery of subsurface structure formed by a disjoint composition of two materials with distinct impedances to ground waves. First traveltime tomography (FTT) (see \cite{RawSam03}) is a subfield of seismic tomography where the observational data for recovery is the first hitting time of a wave at known locations. Other information (e.g. wave profile, amplitude) is unknown, so a full seismic wave description is inefficient and so we use an Eikonal ray approximation (\cite[Appendix C]{book:She09} or \cite[Chapter 4]{book:AkiRic02}). 

The link between the Eikonal equation and  first traveltime problems, has been widely studied  \cite{Son86,Lio85,CapLio90,CraLio83}, where it is shown that if the wave {\em impedance} or {\em slowness function} is  continuous (or with specific forms of jump discontinuities \cite{DecEll04}) then the description admits unique Lipschitz continuous viscosity solutions. The discontinuities arise at boundaries of material regions of different wave impedances \cite{HarZhu15}.  Numerical methods have been investigated to obtain unique viscosity solutions, typically using monotone finite difference schemes for stability. We use the Fast Marching Method (FMM) \cite{Set96,Set99,SetVla00,AdaSet95} due to its robustness \cite{HysTur05}  though fast sweeping methods \cite{Zha05,LanZha13,LuoQiaBur14,JeoWhi08} are also popular.

The inverse problem of recovering the slowness function given observed traveltimes is an underdetermined problem. We work in an optimization framework, and overcome determination issues by restricting optimization over a {\em prior} or {\em model subspace} of functions and adding to the misfit functional a regularization functional.  For piecewise constant slowness function we seek the interfaces between values defined by finite parameters \cite{ChaCotGauHei16,LinOrt17}, or with a finite basis \cite{DecEllSty11} or infinite dimensional subspace \cite{FarHurZhe13,LeuLi13}. One may  use a probabilistic frame \cite{Stu10,IglLuStu16,GhaMarStaTan13}, where the prior is a space of probability distributions, but this is not within our scope.  For binary recovery the solution of the inverse problem is a binary function. Unfortunately, the forward problem is not well posed for general binary minimizers due to the discontinuity. A natural regularization in the optimization framework is to penalize the size of the interfaces between constant values \cite{FatOshRud92,FarHurZhe13,LeuLi13} or to use other so-called `blocky' or `edge preserving' regularizations \cite{JiaZha16,JiaZha18,HarZhu15}. This perimeter regularization is difficult for binary priors because of non-differentiabilty unless one tracks the interfaces.

In this paper we address these two issues and  present a method based on phase field techniques \cite{BloEll93} for binary recovery.  The slowness is regularized using a phase field function and the perimeter regularization is approximated by a phase field gradient energy. This method has several attractive features. Primarily it is mathematically grounded, by producing continuous priors for the slowness function so the forward problem is well posed - this leads to an existence theory for solutions. It it is fit for purpose, as the regularization serves as an approximation to the perimeter regularization functional and so is naturally suited to binary recovery. We are able to construct a convergent discretization scheme based on finite difference method (forward problem) and a mixed finite element method (inverse problem).  Furthermore, the solution of the Eikonal equation is not differentiable with respect to the slowness. In practice, derivatives improve efficiency of schemes, as one may use an adjoint equation or write optimality conditions \cite{LeuQia06,ChaCalNobTai09}. It is shown in \cite{DecEllSty11} that for a particular finite difference scheme, a derivative of the discrete forward problem exists, and is computed efficiently by a variant of the FMM on the discrete adjoint equation. We implement the scheme, using an adjoint equation for the efficient calculation of (a well defined) discrete derivative, and showcase it with a parameter study and apply it to several varied test configurations. 

The power of our method comes from a notion of convergence of smooth minimizers to binary functions \cite{Mod87,BelBraRie05,BloEll91}, a well developed  numerical analysis  \cite{BloEll92,BlaGarSarSty11} and suitability for implementation. We take an obstacle phase field approach \cite{BloEll91,BloEll93} with an  $H^2$ regular functional \cite{HilPelSch02}, (for other choices \cite{CheDelFonLeo11,FonMan00}). The techniques have been applied to piecewise constant recovery for other inverse problems \cite{pre:DunEllHoaStu18,BreDedEll14,DecEllSty16,pre:BerRatVer17}.

\subsubsection{Outline}
We set out the forward problem in \Sref{sec:fp} and inverse problem \Sref{sec:ip}. We present the phase field regularization in \Sref{sec:PFform}. We discretize the problems in  \Sref{sec:discfp} and \Sref{sec:discip}, with attention given to the discrete derivative in \Sref{sec:discderiv}. We present a numerical scheme and investigate choice of parameters, and the scheme's effectiveness through computations in \Sref{sec:res}.

\subsection{Binary recovery}\label{sec:binrec}
\begin{figure}[ht]
        \centering
        \includegraphics[width=0.8\textwidth]{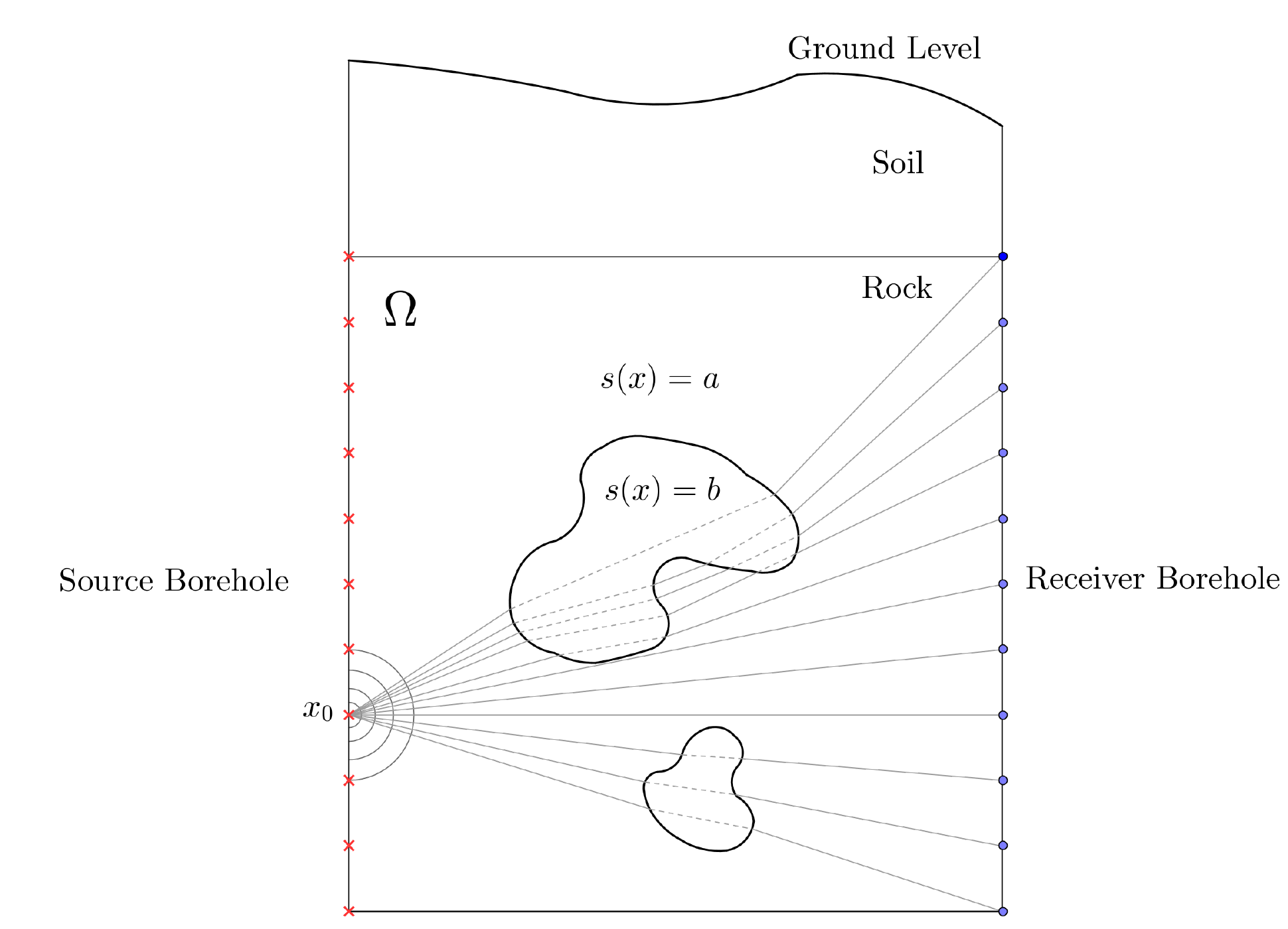}
       \caption{\label{fig:crosswell}Schematic showing crosswell tomography for binary recovery. The boundaries are drilled boreholes, the left populated with wave sources, the right with receivers. A source at $x_0$ is producing a wave that moves through a background medium with slowness $s(x)=a$, and through inclusions where $s(x)=b$. Drawing solution ray paths, one sees Snell's law at inclusion boundaries. The Soner boundary condition requires that rays leaving the domain $\Omega$ do not re-enter.}
       
\end{figure}
We motivate the choices we make in subsequent sections using the following model. Let $\Omega=[0,L_x]\times[0,L_z]\subset\R^2$. The domain is covered by two media, with known distinct slowness $0<a<b$. we then represent the slowness by a binary function, $s:\Omega \to \{a,b\}$. This representation arises in seismic tomography \cite{Gui11,HarZhu15}. The forward problem is the Eikonal equation \eref{eq:eik} -- \eref{eq:sonerbc} with appropriate boundary conditions. We know this yields a unique solution if jump discontinuities form  Lipschitz regular interfaces. 

We know location of sources and receivers, and one common configuration is {\em crosswell} or {\em cross hole} tomography, where two boreholes are drilled at $x=0$ (the source borehole), and $x=L_x$ (the receiver borehole). Sources are set of at regular intervals $[0,(i/M)L_z]$, and recorded at regular intervals $[L_x,(j/N)L_z]$ for $i=1,\dots,M$ and $j=1,\dots,N$. This scenario is depicted in \Fref{fig:crosswell}. The goal of the inverse problem is: given experimental data of hitting times of seismic waves at receivers, can one reconstruct the slowness $s:\Omega \to \{a,b\}$? 

\section{The abstract forward and inverse problems}\label{sec:ap}
\subsection{The forward problem}\label{sec:fp}
Let $\Omega\subset \R^d$ ($d=2$ or $3$), be an open bounded domain with Lipschitz boundary $\p \Omega$ and take $x_0\in\Omega$ fixed. We investigate the first arrival time at $x\in \bar\Omega$ of a ray originating from a source at $x_0$. Denote the space of possible ray paths by,
\[
\Xi_{x_0}(x) \coloneqq \{\,\xi \in W^{1,\infty}([0,1],\bar\Omega) \quad | \quad \xi(0)=x_0, \,\xi(1) = x\,\}. 
\]
We denote the impedance of the ray in the medium (representing subsurface heterogeneity) by defining a continuous and positive {\em slowness function} $s:\bar\Omega \to \R_+$. The {\em first traveltime} $T(x)$ over this set of paths is defined as
\begin{equation}\label{eq:optcont}
 T(x)\coloneqq\inf_{\xi \in \Xi_{x_0}(x)} \int_0^1 s(\xi(r))\, |\xi'(r)|\dd r.
\end{equation}
This extremal value is viewed as the shortest arrival time of a ray that obeys Fermat's principle and travels from $x_0$ to $x$, with speed $c(x)=s(x)^{-1}$.  It has been shown in  \cite{book:Lio82,Son86}, that $T(x)$ formally satisfies a stationary Hamilton-Jacobi equation, namely the following Eikonal equation: 
\begin{eqnarray}
 |\nabla T (x)| = s(x),& \qquad&\forall x \in \Omega \setminus \{x_0\},\label{eq:eik}\\
 T(x_0)=0,& &\label{eq:ptconstr}\\
 \nabla T(y) \cdot n(y) \geq 0,& & \forall y \in \p\Omega,\label{eq:sonerbc}
\end{eqnarray}
where $n$ is the outward pointing unit normal. We say $T$ is a viscosity solution of the Eikonal equation \eref{eq:eik} -- \eref{eq:sonerbc}. In the context of \eref{eq:optcont}, $\nabla T(x)$ is the direction of the optimal ray and $s(x)^{-1}$ is the speed of the ray at $x$. The point condition \eref{eq:ptconstr}  ensures the source at $x_0$ has zero travel time. The Soner boundary condition \eref{eq:sonerbc} ensures information propagates out of the domain, that is, all ray paths terminate at $\p\Omega$ \cite{Son86}. 
\subsection{Well posedness}\label{sec:eikwp}

The well posedness of \eref{eq:eik} -- \eref{eq:sonerbc} is of importance when considering the inverse problem associated to it. The goal of the theory is to clarify the requirements for this Eikonal equations to produce regular (Lipschitz) solutions.
The first proof of well posedness is from \cite{book:Lio82}, where the problem \eref{eq:optcont} was posed with Dirichlet boundary data (replacing \eref{eq:sonerbc}): 
\begin{equation}\label{eq:dirbc}
s = \varphi \textrm{ on }\p\Omega, \textrm{ where } |\varphi(x) -\varphi(y)| \leq \bar T(x,y),
\end{equation}
and $\bar T (x,y)$ is the traveltime between a point $x$ and $y$ in $\bar\Omega$.

The next development is found in \cite{Son86}, where it is shown that if one constrains paths to lie within $\bar\Omega$, a solution of \eref{eq:optcont} is a viscosity solution of \eref{eq:eik} -- \eref{eq:sonerbc}. By viscosity solution here, we mean that $T(x)$ is a subsolution of \eref{eq:eik} for $x \in\Omega\setminus\{x_0\}$ and a supersolution of \eref{eq:eik} for $x\in\bar \Omega\setminus\{x_0\}$, also used in \cite{DecEll04,DecEllSty11}. In \cite{Son86}, it was shown that if $s\in C^0(\Omega)$ is continuous, positive and bounded, there is a unique optimal value $T$, that is Lipschitz continuous on $\bar\Omega$ with constant bounded by $\|s\|_\infty$. The result holds under regularity conditions satisfied by bounded $C^1$ domains, or piecewise smooth boundaries containing isolated corners.

The results have been extended in \cite{DecEll04} to deal with discontinuous slowness function while still achieving Lipschitz continuous solutions. First note, the regularity requirement on $s$ in \cite{Son86} can be written: $\forall x,y \in \Omega$,
\begin{equation}\label{eq:contSon}
 |s(x)-s(y)| \leq w_u(\|x-y\|),
\end{equation}
 where $w_u$ is nondecreasing, continuous and $w_u(0)=0$, differences in $s$ are bounded by a continuous function. In \cite{DecEll04}, this is extended as follows:  $\forall x\in \Omega$, $\exists \e>0,\ n\in S^{n-1}$ such that $\forall y\in\Omega,\ r>0,\ d \in S^{n-1}$, with $|d-n|<\e$ and $y+rd \in \Omega$, such that 
\begin{equation}\label{eq:contDecEll}\fl 
\indent s(y+rd)-s(y) \leq w_s(\|x-y\| + \|y+rd - y\|) = w_s(\|x-y\|+r) = w_s^y (r),
\end{equation}
for the $(n-1)$ -- dimensional unit sphere $S^{n-1}$. The property \eref{eq:contDecEll} requires that at every $x\in\Omega$ one can choose a cone in direction $n$ such that within this cone, the slowness function is bounded by a nondecreasing continuous function $w_u(r)$ (as in \eref{eq:contSon}). This condition holds if $s$ is continuous, and allows for jump discontinuities along an interface where the left and right limits of the jump satisfy strict inequality along the length of the interface. In two dimensions, this can be extended to allow junctions of curves of discontinuity \cite{DecEll04}. 
\begin{rem}\label{rem:wp}
The extension is technical. In particular, functions in $W^{1,\alpha}(\Omega)$ or $BV(\Omega,\{a,b\})$ for $0<a<b$, will not automatically satisfy these cone conditions.
\end{rem}

\subsection{Nondifferentiability of the solution operator}

We note that the solution of \eref{eq:eik} is not differentiable in $s$. Differentiability of the forward problem is often an important feature to solve an inverse problem, as derivative informed methods are often far more efficient for formulating and searching for optimizers. We shall address this through the course of the derivation by making use of a discrete construction found in \cite{DecEllSty11}.

\subsection{The abstract inverse problem}\label{sec:ip}

Let $y$ be observations of the solution of a forward problem, from input slowness $s$. Assume observations are perturbed by (additive) noise $\eta$. The abstract inverse problem is stated as: Given the observed data $y$ of the problem $\mmm[G]$, 
find $s$ satisfying 
\begin{equation}\label{eq:absinv}
 y = \mmm[G](s) + \eta.
\end{equation}
We choose the forward problem to be \eref{eq:eik} -- \eref{eq:sonerbc}, acting on a slowness function $s$ and followed by an operator $K: T(s)\mapsto \mmm[G](s)\in\mmm[O]$ an observation space. The data $y=T_{obs}\in \mmm[O]$  is a set of observed first hitting times at fixed known receiver locations in $\bar \Omega$ or on densely in a region or curve in $\bar \Omega$. Altogether, $ \eta = y - \mmm[G](s) = T_{obs}-K(T(s))$.

We consider a {\em mismatch functional} $\mmm[I](v)$ to measure the {\em data misfit} $y-\mmm[G](s)$ (chosen in \Sref{sec:misfit}). Our goal is to find a minimizer of $\mmm[I]$ within a space $\mmm[A]$, known as the {\em prior} or {\em model space}. We write,
\begin{equation}\label{eq:minfunc}
\textrm{Find } s \coloneqq \min_{v\in\mmm[A]} \mmm[I](v).
\end{equation}
The choices of $\mmm[O]$ and $\mmm[A]$, affect the complexity of the inverse problem. In particular, \eref{eq:eik} -- \eref{eq:sonerbc} and $\mmm[I]$, must be well posed for any $s\in \mmm[A]$, as discussed in \Sref{sec:eikwp}.

\subsection{Misfit functional}\label{sec:misfit}
 For the misfit  we choose a quadratic functional and take the observations  $\mmm[O]$ to be  a subspace of a Hilbert space: 
\begin{equation}\label{eq:mismatch}
 \mathcal{I}(s) = \|y - \mmm[G](s)\|^2_{\mmm[O]} =\frac{1}{2}\|K(T(s)) - T_{obs}\|^2_\mmm[O]. 
\end{equation}
Often the measurements are assumed to be densely defined on union of curves $\Gamma \subset \bar\Omega$.  In this case, take $T_{obs}:\Gamma \to \R_{>0}$ for $T_{obs} \in \mmm[O] =L^2(\Gamma)$. The natural candidate for a misfit functional is
\begin{equation}\label{eq:mismatchbdry}
 \mathcal{I}(s) = \frac{1}{2}\int_{\Gamma}|T(s)(x) - T_{obs}(x)|^2 \dd s_x.
\end{equation}
Commonly $\Gamma\subset \p\Omega$, as observations are taken at surface seismic stations, or at detectors in a drilled well (see \cite{HarZhu15,FarHurLel12,LeuLiQia14} and \Fref{fig:crosswell}). 

We may also consider observations at a finite number of distinct points  $x_1,\dots,x_M \in \bar\Omega$ (see \cite{FarHurLel12}). In this case the natural functional is,
\begin{equation}\label{eq:mismatchpoints}
 \mathcal{I}(s) = \frac{1}{2}\sum_{i=1}^M|T(s)(x_i) - T_{obs}(x_i)|^2.
\end{equation}
In either case, the positions of the observations relative to a source will affect the recovery. We generalize to consider multiple experiments using travel times $T^1(s),\dots T^S(s)$ produced from sources $x_0^1,\dots x_0^S$ by summing over the functionals:
\[
 \mathcal{I}(s) = \frac{1}{2}\sum_{j=1}^S \|K(T^j(s)) - T^j_{obs}\|^2_\mmm[O],
\]
where $T_{obs}^j$ is a traveltime data set corresponding to the source $x_0^j$. This assumes independence of experiments $j=1,\dots S$. Hereafter, we assume a single source and observation space $\mmm[O] =L^2(\Gamma)$ for a boundary segment $\Gamma\subset \p\Omega$. 

\subsection{Noisy observations}
\label{sec:noise}
We often consider the observational data $y$ to be perturbed by a random variable, interpreted as observation imprecision. In the case of $M$ finite observations at discrete points, we introduce this randomness through a normal random variable $\eta\sim \mu = N(0,\Sigma)$ on $\R^M$, where $\Sigma \in \R^{M\times M}$ as a positive-definite covariance. 

The misfit \eref{eq:mismatchpoints} is naturally weighted with the confidence, for example choose $\Sigma=\nu^2 \mathbb{I}$, where $\mathbb{I}$ is the identity on $\R^{M\times M}$ and a constant $\nu$:
\[\fl
\|\Sigma^{-\frac{1}{2}}(y - \mmm[G](s))\|^2_{\mmm[O]} = \frac{1}{2}\sum_{i=1}^M|\Sigma^{-\frac{1}{2}}(T(s)(x_i) - T_{obs}(x_i))|^2 =\frac{1}{2\nu^2}\sum_{i=1}^M|T(s)(x_i) - T_{obs}(x_i)|^2.
\]

\section{Regularization}\label{sec:pfreg}

\subsection{Introduction}\label{eq:misfwp}

To motivate the phase field method, consider {\em Tikhonov} regularizations.
\begin{equation}\label{eq:tik}
 \mathcal{I}^{\textrm{\em Tik},\gamma}(s)= \frac{1}{2}\|y-\mmm[G](s)\|^2_\mathcal{O} + \delta \int_\Omega  |\nabla s |^\alpha \dd x,
\end{equation}
where $\delta>0$ and $\alpha = 1,2$. The simplest choice $\alpha=2$ is problematic because  $s\in \mmm[A]\subset H^1(\Omega)$ does not give a well posed forward problem. A natural regularization for a binary recovery problem is to penalize the interfaces between constant values \cite{FatOshRud92}, (also \cite{FarHurZhe13,HarZhu15}). If $s\in BV(\Omega,\{a,b\})$, we can interpret \eref{eq:tik} with $\alpha=1$ as the total variation of $s$, that is, the perimeter of the set $\{s = a\}$. This is difficult to approximate numerically. Commonly, the problem is relaxed by taking $s\in BV(\Omega)$ and losing the binary nature of the prior. None of these guarantee continuous solutions necessary for the well-posedness of the forward problem (see Remark \ref{rem:wp}). 

\subsection{Phase field formulation}
\label{sec:PFform}
{\em Phase field regularization} provides regular approximation (via gamma convergence \cite{Mod87,BloEll91}) to \eref{eq:tik} with $\alpha=1$ and binary priors.
We separate the notation for the slowness function and the {\em phase field variable} in the regularization. Denote the phase field variable by $u:\Omega \to [-1,1]$, and we solve \eref{eq:eik} -- \eref{eq:sonerbc} with
the  slowness function $s(u(x))$, linear in $u$,  and  $s:[-1,1] \to [s_{\min},s_{\max}]$
\begin{equation}
s(u)=u(s_{\max} - s_{\min})/2 +(s_{\max} + s_{\min})/2
\end{equation}
bounded by $0<s_{\min}<s_{\max}$.
As $u$ approaches $-1$ (or $1$), $s$ approaches $s_{\min}$ (or $s_{\max}$). 
 We take $u$ to be our new variable, and we use a Ginzburg-Landau type functional  requiring $u\in H^2(\Omega)\subset C^0(\Omega)$ so that the forward problem is well defined. Define the phase field functional: for $\sigma,\e,\gamma >0$,
\begin{equation}\label{eq:PFmismatchabs}\fl 
\indent
  \mathcal{I}^\gamma_\e(u) \coloneqq\frac{1}{2} \|y-\mmm[G](s(u))\|^2_\mmm[O]  + \underbrace{\sigma\int_\Omega \Big(
 \gamma\frac{\e^3}{2}(\Delta u)^2 +\frac{\e}{2}|\nabla u|^2 + \frac{1}{\e}\Psi(u) \Big)\dd x}_{\coloneqq \sigma \mathcal{J}_\e^\gamma(u)},
\end{equation}
where $\Psi(u)$ is given by the double obstacle potential:
\begin{equation}\label{eq:doubobs}
\Psi(u)\coloneqq \cases{\case{1}{2} (1-u^2), & if  $u\in [-1,1],$\\
\infty, & otherwise. \\}
\end{equation}
The behaviour of a minimizer of $\mmm[J]_\e^\gamma$ (defined in \eref{eq:PFmismatchabs}), for small $\epsilon$, is a function which favours taking the exact values $-1$ or $1$ in large regions of the domain due to the double obstacle potential \cite{BloEll93}. 
The minimizer changes between values continuously due to the gradient and Laplacian penalizations, across a thin interface characterized by the width parameter $\e>0$. 
The $(\Delta u)^2$ term  of $\mmm[J]_\e^\gamma$ in \eref{eq:PFmismatchabs}, weighted by $\gamma>0$ ensures $s(u) \in H^2(\Omega)\subset C^0(\Omega)$ see \Sref{sec:eikwp}). The use of such penalization is found in \cite{pre:DunEllHoaStu18,BurEspZep15}. 
Possible boundary conditions that may be imposed are as follows
\begin{enumerate}[label=(\roman*)]
 \item $u=1\textrm{ or } -1 \textrm{ on }\p\Omega,\ \frac{\p u}{\p n} =0 \textrm{ on }\p \Omega,$
 \item $ u=1\textrm{ or } -1  \textrm{ or }\dD,\ \frac{\p u}{\p n} =0 \textrm{ on } \dN,  \textrm{ for }\p\Omega=\dD \cup \dN,$
 \item no conditions imposed. 
\end{enumerate}
We interpret a Dirichlet condition on the phase field variable, as having knowledge of the value of the slowness there. A zero Neumann condition on the boundary imposes (in the absence of other information) that the interface must touch the boundary orthogonally there.
We choose to take (ii) with $\dD \cap \dN=\emptyset$. Our prior space is written:
\begin{equation}\label{eq:contAdef}
  \mmm[A] \coloneqq \Big\{ u\in H^2(\Omega)\ \Big|\ u=1\textrm{ or } -1   \textrm{ on } \dD, ~\frac{\p u}{\p n}=0  \textrm{ on } \dN           \Big\}. \\
\end{equation}
\begin{rem}\label{rem:findimprior}
Another choice is a finite dimensional prior as in \cite{DecEllSty11}. The problem is reduced to minimizing over a set of  coefficients of some given functions: Let $\{ \psi_k\} $ satisfy $0\leq \psi_k \in W^{1,\infty}(\Omega)$ $\forall i=1,\dots,K$ and $\sum_{i=k}^K \psi_k = 1$ and their support covers $\bar\Omega$. Define
 \begin{equation}\label{eq:Adef}\fl 
\indent
 \mmm[A] \coloneqq \{\ s\colon \bar \Omega \to \R \ | \ s(x) = \sum_{k=1}^K s_k \psi_k (x), \ s_k\geq 0 \textrm{ bounded, and } \phi_k\geq 0 \textrm{ continuous}\ \},
\end{equation}
The authors showed that when discretized appropriately, the discrete forward problem (based on \eref{eq:eik} -- \eref{eq:sonerbc}) can attain a derivative with respect to the state variables $s_i$. We exploit this setting later to obtain a discrete derivative.
\end{rem} 
Define $\mmm[I]^\gamma_\e$ by \eref{eq:PFmismatchabs}, then our problem is:
\begin{equation}\label{eq:invprob}
 \textrm{Find } u \coloneqq \stackrel[v \in \mathcal{A} ]{}{\arg \min}\ \mmm[I]^\gamma_\e(v),
\end{equation}

\begin{thm}\label{thm:minexist}
 Let $\Omega\subset \R^d$ be bounded with $\p\Omega$ Lipschitz and consider $\mmm[I]^\gamma_\e$ defined in \eref{eq:PFmismatchabs} and $\mmm[A]$ defined in \eref{eq:contAdef}. Then there exists a solution to the minimization problem \eref{eq:invprob}.
 \end{thm}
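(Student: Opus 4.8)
The plan is to use the direct method of the calculus of variations. I would establish three ingredients: coercivity of the functional $\mmm[I]^\gamma_\e$ over $\mmm[A]$ (to extract a bounded minimizing sequence), weak compactness of $\mmm[A]$ in $H^2(\Omega)$, and weak lower semicontinuity of $\mmm[I]^\gamma_\e$. Since the functional is bounded below by zero, a minimizing sequence $(u_n)\subset\mmm[A]$ exists with $\mmm[I]^\gamma_\e(u_n)\to m\coloneqq\inf_{v\in\mmm[A]}\mmm[I]^\gamma_\e(v)$.

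First I would show the minimizing sequence is bounded in $H^2(\Omega)$. The regularization term $\sigma\mmm[J]_\e^\gamma$ controls $\|\Delta u_n\|_{L^2}^2$ and $\|\nabla u_n\|_{L^2}^2$ directly (the misfit term is nonnegative so $\sigma\mmm[J]_\e^\gamma(u_n)\le\mmm[I]^\gamma_\e(u_n)\le m+1$ for large $n$). The obstacle constraint forces $u_n(x)\in[-1,1]$ pointwise, so $\|u_n\|_{L^\infty}\le 1$ and hence $\|u_n\|_{L^2}$ is bounded; combined with the gradient and Laplacian bounds and elliptic regularity (using the boundary conditions encoded in $\mmm[A]$), this yields a uniform $H^2$ bound. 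By reflexivity of $H^2(\Omega)$, a subsequence satisfies $u_n\weakto u$ in $H^2(\Omega)$ for some limit $u$.

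Next I would verify $u\in\mmm[A]$. The constraint $u\in[-1,1]$ is preserved because the pointwise bound passes to the (strong $L^2$, via compact embedding $H^2\hookrightarrow\hookrightarrow L^2$) limit; the convex set $\{v:|v|\le 1 \text{ a.e.}\}$ is weakly closed. The boundary conditions $u=\pm1$ on $\dD$ and $\p u/\p n=0$ on $\dN$ are preserved because the trace operators $H^2(\Omega)\to L^2(\dD)$ and $H^2(\Omega)\to L^2(\dN)$ (for the normal derivative) are weakly continuous, so these linear constraints survive weak limits. Thus $\mmm[A]$ is weakly closed and $u\in\mmm[A]$.

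Finally I would establish weak lower semicontinuity. The quadratic Dirichlet and Laplacian terms are convex and continuous, hence weakly lower semicontinuous; the potential term $\int_\Omega\Psi(u)\dd x=\frac12\int_\Omega(1-u^2)\dd x$ is continuous under the \emph{strong} $L^2$ convergence granted by the compact embedding $H^2(\Omega)\hookrightarrow\hookrightarrow L^2(\Omega)$. \textbf{The main obstacle} is the misfit term: one must show $s(u_n)\to s(u)$ in a topology under which $\mmm[G]=K\circ T$ is continuous, i.e.\ that $T(s(u_n))\to T(s(u))$ in $\mmm[O]$. Here I would invoke the well-posedness theory of \Sref{sec:eikwp}: since $u\in H^2(\Omega)\subset C^0(\Omega)$ (and $s$ is linear in $u$), each $s(u_n)$ is continuous and yields a unique Lipschitz viscosity solution $T(s(u_n))$ with Lipschitz constant controlled by $\|s(u_n)\|_\infty$. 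The compact embedding $H^2(\Omega)\hookrightarrow\hookrightarrow C^0(\Omega)$ (valid for $d=2,3$) gives $u_n\to u$ uniformly, hence $s(u_n)\to s(u)$ uniformly, and a stability estimate for viscosity solutions of the Eikonal equation under uniform perturbation of the slowness yields $T(s(u_n))\to T(s(u))$, whence continuity of $\mmm[G]$ and lower semicontinuity (in fact continuity) of the misfit. Combining, $\mmm[I]^\gamma_\e(u)\le\liminf_n\mmm[I]^\gamma_\e(u_n)=m$, so $u$ is a minimizer.
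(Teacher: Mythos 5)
Your proposal is correct and follows the same overall skeleton as the paper's proof: the direct method (the paper invokes it via Ciarlet's Theorem 9.3.1), coercivity obtained from the gradient and Laplacian terms of $\mathcal{J}^\gamma_\e$ together with the pointwise bound $|u|\le 1$ forced by the double obstacle, weak sequential closedness of $\mathcal{A}$, and weak lower semicontinuity of the quadratic regularization terms after passing to strong $L^2$ convergence of $u$ and $\nabla u$. Where you genuinely diverge is in the treatment of the misfit term, which is the delicate point. The paper argues that $u_k\to u$ strongly in $H^1(\Omega)$, applies the trace theorem to get strong $L^2(\p\Omega)$ convergence of the traces of $u_k$, and then appeals to ``continuity of $\mathcal{G}$'' to conclude; note that what is actually needed is convergence of $T(s(u_k))$ on $\Gamma$, not of $u_k$ on $\p\Omega$, so the trace step there is somewhat oblique and the continuity of the solution operator is left implicit. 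You instead exploit the compact embedding $H^2(\Omega)\hookrightarrow\hookrightarrow C^0(\bar\Omega)$ (valid for $d=2,3$) to upgrade weak $H^2$ convergence to uniform convergence of $s(u_n)$, and then invoke a sup-norm stability estimate for viscosity solutions of the Eikonal equation to get $T(s(u_n))\to T(s(u))$, from which continuity of the misfit follows. This is arguably the cleaner and more honest route, since it makes explicit exactly which continuity property of the forward map is being used and why it holds on the prior space; the one thing you share with the paper is that the stability of the state-constrained viscosity solution under uniform perturbation of a continuous slowness is cited rather than proved, which is acceptable given the well-posedness discussion in Section 2.2 but should be referenced precisely (e.g.\ to the comparison results of Soner) in a final write-up.
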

 
 \begin{proof}
  We wish to use  \cite[Theorem 9.3.1]{book:Cia13} to prove minimizers exist. We require $\mmm[A]$ to be weakly sequentially closed in $H^2(\Omega)$, and that the functional \eref{eq:PFmismatchabs}:
  \[
 \mmm[I]^\gamma_\e (u) = \frac{1}{2}\|y - \mmm[G](s(u))\|^2_\mmm[O] +\sigma \mmm[J]^\gamma_\e(u),
\]
is coercive and lower semicontinuous over $\mmm[A]$. Firstly, $\mmm[A]$ is an unbounded, weakly closed subset of $H^2$. Due to \eref{eq:doubobs}, a minimizer will not satisfy  $u(\Omega)\not\subset[-1,1]$  so we work over $\tilde\mmm[A]=\mmm[A]\cap H^2(\Omega\, ;[-1,1])$. Over $\tilde\mmm[A]$, we see 
\[\fl 
\indent
 \sigma \mmm[J]^\gamma_\e(u) = \sigma\int_\Omega \Big(
 \gamma\frac{\e^3}{2}(\Delta u)^2 +\frac{\e}{2}|\nabla u|^2 + \frac{1}{2\e}(1-u^2) \dd x \\
 \geq \sigma \gamma \frac{\e^3}{2}||\lap u||_{L^2(\Omega)}^2 +\frac{\e}{2}||\nabla u||_ {L^2(\Omega)}^2
\]
and $\| u\|^2_{L^2(\Omega)} \leq C_u\leq |\Omega|$. From the Poincar\'{e} inequality we obtain  $\mmm[J]^\gamma_\e$ is coercive on $\tilde\mmm[A]$. Weak convergence in $H^2(\Omega)$ implies strong convergence of $u,\ \nabla u$ in $L^2(\Omega)$ and weak convergence of $\lap u$ in $L^2(\Omega)$. Quadratic functionals are weak lower semicontinuous, and so $\mmm[J]^\gamma_\e$ is too. 

The data misfit is nonnegative and so immediately $\mmm[I]^\gamma_\e$ is coercive. For lower semicontinuity of the misfit we consider a weakly convergent sequence $(u_k)$ in $H^2(\Omega)$ to $u \in \mmm[A]$. As $u_k\mapsto \mmm[G](s(u_k))$ is well defined and Lipschitz continuous, then the trace onto $\p\Omega$ is (at least) in $\mmm[O]=L^2(\p\Omega)$. Furthermore, $u_k$ converges strongly in $H^1(\Omega)$, and so by the trace theorem for Lipschitz domains \cite[Theorem A8.6]{book:Alt16}, $u_k$ converges strongly on $\p\Omega$ in $L^2(\Omega)$. The continuity of $\mmm[G](s(u_k))$ preserves the limit, thus $\mmm[G](s(u_k)$ converges to $\mmm[G](s(u))$ in $\mmm[O]$ and the misfit (and so $\mmm[I]^\gamma_\e$) is weakly sequentially lower semicontinuous. 
We then apply \cite[Theorem 9.3.1]{book:Cia13} to complete the proof.
\end{proof}

\subsection{Gamma convergence}\label{sec:gamma}

The strength of the phase field technique lies with the convergence of the phase field functional  to the perimeter functional in the sense of $\Gamma$ -- convergence. 
as the interfacial parameter $\e \to 0$. The first result  \cite{Mod87} was for the Ginzburg-Landau functional (with $H^1$ minimizers) with a quartic double well potential $\Psi(u) = \frac{1}{2}(1-u^2)^2$. More recently this has been extended to a functional with $H^2$ minimizers \cite{HilPelSch02} (similar results in \cite{CheDelFonLeo11,FonMan00}). The analysis of \cite{Mod87} was extended to the double obstacle potential \eref{eq:doubobs} in \cite{BloEll91} by investigation of profile which appear across interfaces of minimizers. With this technique, we extend the analysis of \cite{HilPelSch02} to $\mathcal{J}^\gamma_\e$ in \eref{eq:PFmismatchabs} with a double obstacle \eref{eq:doubobs}. We denote $\int_\Omega |\nabla u|\dd x$ for $u\in BV(\Omega)$ to be the total variation of $u$.

\begin{thm}\label{thm:gammaconv}
 Let $\Omega \subset \R^d$ with Lipschitz boundary. Define the following functionals:
\begin{equation}\label{eq:gconvint}
 \mathcal{J}^\gamma_\e (u) \coloneqq\int_\Omega \gamma\frac{\e^3}{2}(\Delta u)^2 +\frac{\e}{2}|\nabla u|^2 + \frac{1}{\e}\Psi(u)\dd x,
\end{equation}
and
\begin{equation}\label{eq:gconvint1d}
 j^\gamma (z) \coloneqq \int_{\R}\frac{1}{2}\gamma (z'')^2 + \frac{1}{2}(z')^2+\Psi(z) \dd x,
\end{equation}
where $\Psi$ is the double obstacle potential \eref{eq:doubobs}. The $\Gamma$ -- limit:  $\mathcal{J}^\gamma_0 \coloneqq \stackrel[\e\to 0]{}{\Gamma - \lim } \mathcal{J}^\gamma_\e$,
is proportional to the perimeter functional:
\[
 \mathcal{J}^\gamma_0(u) =   \cases{
   \frac{1}{2}P^\gamma \int_{\Omega} |\nabla u | \dd x & if  $u \in BV(\Omega,\{-1,1\}),$\\
   \infty &  if  $u \in L^1(\Omega) \setminus BV(\Omega,\{-1,1\})$.
   }
\]
for 
\[
BV(\Omega,\{a,b\})\coloneqq \{\ w \in BV(\Omega)\; \colon \; w(\Omega) \subset \{a,b\}\ \}.
\]
The constant $P^\gamma \coloneqq \stackrel[v \in {\mmm[V]}]{}{\inf} j^\gamma(v)$ is the double obstacle transition energy, where
\begin{eqnarray*}\fl 
\indent
 \mmm[V] = \Big\{ v\in C^2(\R ; [-1,1] ) \ |& \  \exists \delta>0, \forall x \in \R  \ v(x)= -v(-x), \\
 &\ v'(x)\geq 0, \textrm{ and }  v(x>\delta)=1,\ v(x<-\delta)=-1\Big\}.
\end{eqnarray*}

\begin{proof}
 We consign the proof of this theorem to \ref{sec:app}
\end{proof}

\end{thm}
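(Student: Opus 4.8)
The plan is to follow the standard two-part structure of a $\Gamma$-convergence proof, adapting the double-obstacle analysis of \cite{BloEll91} to the higher-order functional of \cite{HilPelSch02}. I must establish (a) the $\liminf$ (lower bound) inequality: for every sequence $u_\e \to u$ in $L^1(\Omega)$, $\liminf_{\e\to 0}\mathcal{J}^\gamma_\e(u_\e) \geq \mathcal{J}^\gamma_0(u)$; and (b) the $\limsup$ (recovery sequence) inequality: for every $u$ there exists $u_\e \to u$ in $L^1(\Omega)$ with $\limsup_{\e\to 0}\mathcal{J}^\gamma_\e(u_\e) \leq \mathcal{J}^\gamma_0(u)$. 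Throughout, the finite-energy case forces $u$ to take values in $\{-1,1\}$ a.e. (since the obstacle potential $\Psi$ and the $L^1$ limit pin the values to $[-1,1]$, and the gradient penalty combined with $\Psi$ degenerating only at $\pm 1$ rules out intermediate plateaus in the limit), so the limit is a binary $BV$ function whose energy is the perimeter weighted by $P^\gamma$.

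For the $\liminf$ inequality I would first reduce to a one-dimensional problem transverse to the interface. The key tool is the optimal-profile construction: define the one-dimensional energy $j^\gamma$ as in \eqref{eq:gconvint1d}, and identify $P^\gamma = \inf_{v\in\mmm[V]} j^\gamma(v)$ as the cost of a single transition. I would slice the domain along lines normal to the (approximate) level sets of $u_\e$, apply a blow-up/rescaling $x \mapsto x/\e$ that turns $\mathcal{J}^\gamma_\e$ into a sum of rescaled one-dimensional functionals converging to copies of $j^\gamma$, and then invoke a lower-semicontinuity/slicing argument (coarea formula together with Fatou) to bound the total energy below by $P^\gamma$ times the perimeter. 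The compactness needed to pass to the limit comes from the coercivity already established in the proof of Theorem 2.1, which controls $\|\nabla u_\e\|_{L^2}$ and $\e^{3/2}\|\Delta u_\e\|_{L^2}$; an equipartition-of-energy estimate (showing the gradient and potential terms asymptotically balance, with the Laplacian term contributing the $\gamma$-dependent correction) is what pins the constant to exactly $P^\gamma$.

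For the $\limsup$ inequality I would build an explicit recovery sequence. Given $u \in BV(\Omega,\{-1,1\})$ with interface $\Gamma = \partial\{u=1\}$, I would define $u_\e(x) \coloneqq v^\gamma\!\big(\mathrm{dist}_{\mathrm{signed}}(x,\Gamma)/\e\big)$ using the optimal (or near-optimal) one-dimensional profile $v^\gamma \in \mmm[V]$ achieving $P^\gamma$, smoothed and cut off so that $u_\e \in H^2$ and respects the compact-support structure of $\mmm[V]$. Computing $\mathcal{J}^\gamma_\e(u_\e)$ via the coarea formula and the tubular-neighbourhood change of variables, the one-dimensional integral collapses to $j^\gamma(v^\gamma) = P^\gamma$ per unit area of $\Gamma$, giving the matching upper bound. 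A density argument handles general $u$ by first approximating $\Gamma$ with smooth interfaces; the $H^2$ regularity and the obstacle constraint require care so that the profile stays within $[-1,1]$ and attains $\pm 1$ exactly outside the interfacial layer.

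I expect the main obstacle to be the lower bound, specifically handling the second-order term $\frac{\gamma\e^3}{2}(\Delta u)^2$ in the slicing argument. Unlike the first-order Modica--Mortola case, the Laplacian does not slice cleanly into transverse second derivatives — it couples tangential and normal curvatures of the level sets — so I must show that the tangential curvature contributions are asymptotically negligible (of lower order in $\e$) and that only the normal second derivative survives to reconstruct $j^\gamma$. Controlling these curvature cross-terms, and ensuring the blow-up limit genuinely lands in the admissible class $\mmm[V]$ (monotone, odd, with the correct boundary behaviour at $\pm\delta$), is the delicate technical core; this is precisely why the detailed argument is deferred to the appendix.
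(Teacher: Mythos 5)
Your overall architecture matches the paper's: a $\liminf$ inequality, a $\limsup$ inequality via the recovery sequence $u_\e(x)=U(d(x)/\e)$ built from a one-dimensional profile composed with (an extension of) the signed distance function, and the coarea formula to collapse the energy onto $P^\gamma\,\mathcal{H}^{d-1}(\p D\cap\Omega)$. Two differences are worth flagging. First, the centrepiece of the paper's appendix is a lemma you omit entirely: the existence \emph{and explicit construction} of the optimal profile. The paper writes the Euler--Lagrange condition for $j^\gamma$ over $\mmm[V]$ as a variational inequality, reduces it on the coincidence-free set to the linear obstacle Fisher--Kolmogorov equation $\gamma Z''''-Z''-Z=0$, solves it explicitly as $\tilde C_1\sinh(\lambda_1 x)+\tilde C_2\sin(\lambda_2 x)$, and determines the transition half-width $\delta^\gamma$ from the transcendental $C^2$-matching condition $\lambda_2\tan(\lambda_2\delta^\gamma)=-\lambda_1\tanh(\lambda_1\delta^\gamma)$. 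This is not mere decoration: the resulting profile is odd and identically $\pm1$ outside $[-\delta^\gamma,\delta^\gamma]$, and both facts are used downstream (oddness is exactly the hypothesis under which the paper imports the $\liminf$ inequality from Hilhorst--Peletier--Sch\"atzle; the compact transition is what kills the integral over $\Omega\setminus N_h$ in the $\limsup$). You could in principle run the $\limsup$ with near-minimizers in $\mmm[V]$ and a diagonal argument, since membership of $\mmm[V]$ already forces the compact transition, but you should say so; as written, "the optimal (or near-optimal) profile achieving $P^\gamma$" papers over the one genuinely new computation in the appendix. Second, for the $\liminf$ you propose to redo the slicing/blow-up argument from scratch, correctly identifying that the Laplacian does not slice cleanly into normal second derivatives. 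The paper does not do this work: it observes that the proof of \cite[Proposition 3.2]{HilPelSch02} goes through verbatim because it never uses the specific form of the potential, only the oddness of the profile. Your route is more self-contained but substantially harder, and your stated compactness input is the wrong one --- the coercivity of Theorem \ref{thm:minexist} controls $\e\|\nabla u_\e\|_{L^2}^2$ and $\gamma\e^3\|\Delta u_\e\|_{L^2}^2$, which degenerate as $\e\to0$; in any case the $\liminf$ inequality takes $u_\e\to u$ in $L^1$ as given and needs no compactness. Neither issue is fatal to the strategy, but the profile lemma is the piece you would actually have to supply to make the proof complete.
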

\begin{rem}\label{rem:gamma1}
 The relationship between \eref{eq:gconvint} and \eref{eq:gconvint1d} is through an ansatz on the phase field variable $u$:
 \[
 u(x) = z^\gamma\Big(\frac{d(x)}{\e}\Big),\qquad z(0)=0,
 \]
where $d(x)$ is the signed distance function to the limiting interface at $\{u(x)=0\}$, and $z^\gamma:\R \to [-1,1]$ is a smooth function found in \ref{sec:app}. Inserting the ansatz into \eref{eq:gconvint} results in \eref{eq:gconvint1d}.
\end{rem}

\begin{rem}
For $\mmm[I]^\gamma_\e$ in \eref{eq:PFmismatchabs}, an equivalent result to Theorem \ref{thm:gammaconv} is unknown, due to technicalities in assuring that misfit functional is well defined in the $\e\to 0$ limit.
\end{rem}

\subsection{Mixed formulation}\label{sec:mix}
In practice, approximating the high order derivatives as found in the problem \eref{eq:invprob} in the discrete setting is complex when using  conforming finite elements. We therefore create a mixed formulation. We begin by introducing a new variable $w$ such that $w=-\lap u$ weakly on $\Omega$, and setting: 
 \begin{equation}\label{eq:mixPFmismatchabs}
 \mmm[I]^\gamma_{\e}(u,w) =\frac{1}{2} \|y-\mmm[G](s(u))\|_\mmm[O]^2 + \underbrace{ \sigma\int_\Omega  \gamma\frac{\e^3}{2}w^2 +\frac{\e}{2}|\nabla u|^2 + \frac{1}{\e}\Psi(u)  \dd x}_{ \sigma\mmm[J]^\gamma_\e(u,w)}.
\end{equation}
 The natural optimization problem is then 
\begin{equation}\label{eq:optcontmix}
 \textrm{Find } (u,w)\coloneqq \stackrel[(\tilde u,\tilde w) \in {\mmm[A]_{\Delta}}]{} {\arg\min} \ \mmm[I]^\gamma_{\e}(\tilde u,\tilde w). 
\end{equation}
We find a suitable set $\mmm[A]_\Delta$ with appropriate boundary conditions and treat the cases $\dD \neq \emptyset$ and $\dD =\emptyset$ separately. 
We define a bilinear form $B:H^1(\Omega)\times H^1(\Omega) \to \R$, 
\[
 B(z,v)= \int_\Omega \nabla z \cdot \nabla v\dd x,
\]
and denote the $L^2$ inner product by $\langle\cdot,\cdot\rangle$. Define the spaces
\begin{eqnarray}
H^1_D &= \{ u \in H^1(\Omega) \ | \ u=g \textrm{ on } \dD\},\label{eq:h1d}\\
H^1_{0,D} &= \{ u \in H^1(\Omega) \ | \ u=0 \textrm{ on } \dD\}.\label{eq:h10d}
\end{eqnarray}
then we may define 
\begin{equation}\label{eq:mixAdef}
 \mmm[A]_\Delta \coloneqq \cases{ 
                              \mmm[A]_{\Delta,D}, & if $\dD \neq \emptyset,$ \\
                              \mmm[A]_{\Delta,N}, &if  $\dD = \emptyset,$
                          }
\end{equation}
where, 
\begin{eqnarray}\label{eq:mixADdef}\fl 
\indent
 \mmm[A]_{\Delta,D} \coloneqq \Big\{ (u,w) \in H^1_D(\Omega) \times L^2(\Omega)\ \Big|\  B(u,\zeta) = \langle w,\zeta \rangle,\  \forall \zeta \in H^1_{0,D}(\Omega) \Big\}, \\\fl 
\indent
 \mmm[A]_{\Delta,N} \coloneqq \Big\{  (u,w)\in H^1(\Omega) \times L^2(\Omega)\ \Big|\  \int_\Omega w \dd x= 0,\nonumber \\
 \phantom{\coloneqq \Big\{  (u,w)\in H^1(\Omega) \times L^2(\Omega)\ \Big|\ \ \ } B(u,\zeta) = \langle w,\zeta \rangle,\  \forall \zeta \in H^1(\Omega) \Big\}.\label{eq:mixANdef} 
\end{eqnarray}
\begin{rem}
The  compatibility condition $\int_\Omega w \dd x =0$ in  \eref{eq:mixANdef} is necessary to make sense of the relation between $u$ and $w$ in the case of the  pure Neumann boundary condition, $\p u/\p n=0$ on $\p\Omega$. 
\end{rem}
We now show existence of solutions for the mixed formulation.
\begin{thm}\label{thm:mixminexist}
 Let $\Omega\subset \R^d$ be bounded with  $\p\Omega$  either being    $C^2$ or  $\p\Omega$  Lipschitz and $\Omega$ convex. Consider $\mmm[I]^\gamma_\e$ defined in \eref{eq:mixPFmismatchabs} and $\mmm[A]_\Delta$ as in \eref{eq:mixAdef}. Then there exists a solution to the minimization problem \eref{eq:optcontmix}
\end{thm}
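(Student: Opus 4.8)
The plan is to mirror the direct-method argument of Theorem~\ref{thm:minexist}, again invoking \cite[Theorem 9.3.1]{book:Cia13}, but now working in the product space $H^1(\Omega)\times L^2(\Omega)$ with the linear relation $B(u,\zeta)=\langle w,\zeta\rangle$ built into $\mmm[A]_\Delta$. As before, the obstacle potential \eref{eq:doubobs} forces any candidate minimiser to satisfy $u(\Omega)\subset[-1,1]$, so I would restrict attention to $\tilde\mmm[A]_\Delta=\mmm[A]_\Delta\cap\big(H^2(\Omega;[-1,1])\times L^2(\Omega)\big)$ and verify three things over this set: coercivity, weak sequential closedness, and weak lower semicontinuity of $\mmm[I]^\gamma_\e$ from \eref{eq:mixPFmismatchabs}.

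For coercivity, the two quadratic bulk terms in \eref{eq:mixPFmismatchabs} directly control $\|w\|_{L^2(\Omega)}$ (through $\gamma\frac{\e^3}{2}w^2$) and $\|\nabla u\|_{L^2(\Omega)}$ (through $\frac{\e}{2}|\nabla u|^2$), while $\|u\|_{L^2(\Omega)}\le|\Omega|^{1/2}$ is automatic on $[-1,1]$. The Poincar\'e inequality—using the Dirichlet data on $\dD$ when $\dD\neq\emptyset$, or the zero-mean condition $\int_\Omega w\dd x=0$ together with a Poincar\'e--Wirtinger estimate in the pure Neumann case—then yields coercivity in $H^1(\Omega)\times L^2(\Omega)$. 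Extracting a minimising sequence $(u_k,w_k)$, I obtain $(u_k,w_k)\weakto(u,w)$ in $H^1(\Omega)\times L^2(\Omega)$ and, by Rellich, $u_k\to u$ strongly in $L^2(\Omega)$, hence a.e.\ along a further subsequence, so that the pointwise bound $u\in[-1,1]$ is preserved in the limit.

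The crucial step, and the reason the boundary hypotheses are strengthened relative to Theorem~\ref{thm:minexist}, is elliptic regularity. The constraint $B(u,\zeta)=\langle w,\zeta\rangle$ is precisely the weak form of $-\lap u=w$ with the mixed, respectively Neumann, boundary conditions encoded by the test space in \eref{eq:mixADdef}--\eref{eq:mixANdef}. Under either a $C^2$ boundary or a convex Lipschitz domain the relevant Laplacian enjoys $H^2$ regularity, so $u\in H^2(\Omega)$ with $\|u\|_{H^2(\Omega)}\le C\big(\|w\|_{L^2(\Omega)}+\textrm{boundary data}\big)$. This single estimate does double duty: it guarantees $s(u)\in H^2(\Omega)\subset C^0(\Omega)$, so the forward problem is well posed on $\tilde\mmm[A]_\Delta$, and it bounds $(u_k)$ in $H^2(\Omega)$ so that, after passing to a further subsequence, $u_k\to u$ strongly in $H^1(\Omega)$ by compact embedding. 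Weak closedness of $\tilde\mmm[A]_\Delta$ then follows since the constraint is linear: $B(\cdot,\zeta)$ and $\langle\cdot,\zeta\rangle$ are weakly continuous, and the Dirichlet trace and the zero-mean condition pass to the weak limit.

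Finally, for lower semicontinuity I would split $\mmm[I]^\gamma_\e$ as in \eref{eq:mixPFmismatchabs}. The terms $\int_\Omega w^2\dd x$ and $\int_\Omega|\nabla u|^2\dd x$ are convex, hence weakly lower semicontinuous, while $\int_\Omega\frac{1}{2\e}(1-u^2)\dd x$ converges along the sequence by the strong $L^2$ convergence of $u_k$. The misfit term is handled exactly as in Theorem~\ref{thm:minexist}: strong $H^1$ convergence gives strong trace convergence on $\p\Omega$ by \cite[Theorem A8.6]{book:Alt16}, and continuity of $u\mapsto\mmm[G](s(u))$ yields convergence in $\mmm[O]$. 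Applying \cite[Theorem 9.3.1]{book:Cia13} completes the argument. I expect the principal obstacle to be the elliptic-regularity step: securing $H^2$ regularity for the mixed Dirichlet--Neumann problem on a $C^2$ domain (where the junction along $\dD\cap\dN$ must be controlled, which is why the separation $\dD\cap\dN=\emptyset$ is imposed) and for the Neumann problem on a convex Lipschitz domain, together with confirming that the regularity bound transfers uniformly along the minimising sequence.
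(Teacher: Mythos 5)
Your proposal is correct and follows the same overall skeleton as the paper's proof: the direct method via \cite[Theorem 9.3.1]{book:Cia13}, restriction to the set where $u(\Omega)\subset[-1,1]$ forced by the obstacle potential, coercivity read off from the two quadratic bulk terms together with the trivial $L^2$ bound on $u$, elliptic regularity to guarantee $u\in H^2(\Omega)\subset C^0(\Omega)$ so the forward map and misfit are well defined, and the trace theorem plus continuity of $\mmm[G]$ for lower semicontinuity of the misfit. The one step where you genuinely diverge is the upgrade from weak to strong $H^1(\Omega)$ convergence of $u_k$, which is the crux needed before invoking the trace theorem. The paper gets this without any uniform $H^2$ bound: it tests the constraint $B(u_k,\zeta)=\langle w_k,\zeta\rangle$ with $\zeta=u_k-g$ and passes to the limit in $\int_\Omega w_k v_k\dd x$ (weak-times-strong pairing) to conclude $\|\nabla u_k\|_{L^2}\to\|\nabla u\|_{L^2}$, which together with weak convergence gives strong convergence. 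You instead use the a priori estimate $\|u\|_{H^2}\le C\|w\|_{L^2}$ and the compact embedding $H^2(\Omega)\hookrightarrow H^1(\Omega)$. Both are valid under the stated hypotheses; the paper's route is slightly more economical in that it only uses elliptic regularity qualitatively (to make sense of the forward problem), whereas yours leans on the quantitative estimate, and is therefore more sensitive to the issue you correctly flag about $H^2$ regularity for the mixed Dirichlet--Neumann problem near the junction of $\overline{\dD}$ and $\overline{\dN}$ --- a point the paper itself does not fully address, since its own claim that every $(u,w)\in\mmm[A]_{\Delta,D}$ has $u\in H^2(\Omega)$ faces the same difficulty. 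Two minor slips: the restricted set should be $\mmm[A]_\Delta\cap\big(H^1(\Omega;[-1,1])\times L^2(\Omega)\big)$ (the $H^2$ regularity is a consequence, not part of the definition), and in the pure Neumann case no Poincar\'e--Wirtinger argument is needed for coercivity since $\|u\|_{L^2}$ is already controlled by the obstacle constraint; the zero-mean condition on $w$ is a compatibility condition for the constraint, not a coercivity device.
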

\begin{proof}
The assumptions on $\Omega$ imply that if $(u,w)\in  \mmm[A]_\Delta$ then standard results for $-\Delta u=w, w\in L^2(\Omega),u\in H^1(\Omega)$ in elliptic regularity, (see \cite[Chapter 2]{book:Gri11} for convex Lipschitz domain),  imply that $u\in H^2(\Omega)$ and is continuous so that the forward problem has a solution and the 
misfit functional is well defined. Consider $\dD \neq \emptyset$. The set $\mmm[A]_{\Delta,D}$ is an unbounded, weakly closed subset of $H^1(\Omega) \times L^2(\Omega)$. We seek to apply the theorem of \cite[Theorem 9.3.1]{book:Cia13}, and so must show the functional \eref{eq:mixPFmismatchabs} is coercive and lower semicontinuous over $\mmm[A]_{\Delta,D}$. For coercivity, by definition of \eref{eq:doubobs}, we are done if $u(\Omega)\not\subset[-1,1]$, and so work over $\tilde \mmm[A]_{\Delta,D}=\mmm[A]_{\Delta,D} \cap \big( H^1(\Omega,[-1,1]) \times L^2(\Omega)\big)$ with norm $\|(u,w)\|_{\Delta} = (\|u\|^2_{H^1(\Omega)}+\|w\|^2_{L^2(\Omega)})^{\frac{1}{2}}$.

We see that over $\tilde\mmm[A]_{\Delta,D}$ using \eref{eq:doubobs}, we have 
\begin{eqnarray*}
\sigma \mmm[J]^\gamma_\e(u,w) &= \sigma\int_\Omega  \gamma\frac{\e^3}{2}w^2 +\frac{\e}{2}|\nabla u|^2 + \frac{1}{2\e}(1-u^2) \dd x \\
&\geq \sigma \int_\Omega\gamma\frac{\e^3}{2}w^2 +\frac{\e}{2}\Big(|\nabla u|^2+u^2\Big)\dd x - (\frac{1}{2\e}+\frac{\e}{2}) \int_\Omega u^2 \dd x\\
&\geq \min(\sigma\frac{\e^3}{2},\frac{\e}{2})\|(u,w)\|^2_{\Delta} -(\frac{1}{2\e}+\frac{\e}{2})C_u,
\end{eqnarray*}
where $\|u\|^2_{L^2(\Omega)}\leq C_u \leq |\Omega|$. Therefore  $\mmm[J]^\gamma_\e$ is coercive on $\tilde \mmm[A]_{\Delta,D}$. Moreover, it is sequentially lower semicontinuous on $\mmm[A]_{\Delta,D}$ as weak convergence in $H^1(\Omega) \times L^2(\Omega)$ implies $\nabla u$ and $w$ weakly, and $u$ strongly converge, and quadratic functionals are weakly lower semicontinuous.

The data misfit is nonnegative and so immediately $\mmm[I]^\gamma_\e$ is coercive. 
For lower semicontinuity of the misfit we consider a weakly convergent sequence $(u_k,w_k)$ in $\mmm[A]_{\Delta,D}$. Thus $u_k\mapsto \mmm[G](s(u_k))$ is well defined and Lipschitz continuous, then the trace onto $\p\Omega$ is (at least) in $\mmm[O]=L^2(\p\Omega)$.

The relationship of \eref{eq:mixADdef}  holds for any $\zeta \in H^1_{0,D}(\Omega)$. One can choose $g\in H^1(\Omega)$ s.t $u_k-g=v_k\in H^1_{0,D}(\Omega)$ and choose $\zeta = v_{k}$. Then:
\begin{eqnarray*}
 \int_{\Omega} |\nabla u_k|^2\dd x =  \int_{\Omega} \nabla u_k \cdot \nabla v_k + \nabla u_k \cdot \nabla g \dd x  = \int_{\Omega} w_{k}v_k + \nabla u_k\cdot \nabla  g\dd x \\
 \to \int_{\Omega} w v +\nabla u\cdot \nabla g \dd x = \int_{\Omega} |\nabla u|^2 \dd x ,\qquad \textrm{ as } k\to \infty
\end{eqnarray*}
and so $u_k$ converges strongly in $H^1$. By the trace theorem for Lipschitz domains \cite[Theorem A8.6]{book:Alt16} we have that $u_k$ converges strongly on $\p\Omega$ in $L^2$. The continuity of $\mmm[G](s(u_k))$ preserves the limit, thus $\|\mmm[G](s(u_k)\|_\mmm[O]\to\|\mmm[G](s(u))\|_\mmm[O]$ and the misfit (and so $\mmm[I]^\gamma_\e$) is weakly sequentially lower semicontinuous. 

The proof for the case $\dD =\emptyset$ is similar.
\end{proof}

We now assert that the mixed formulation yields solutions for the original problem:
\begin{thm}\label{thm:mixequiv}
 Let $\Omega\in\R^d$, bounded with $\p\Omega$ either being $C^2$ or with $\p\Omega$ Lipschitz and $\Omega$ convex. Let $(u,w)\in \mmm[A]_\Delta$ defined by \eref{eq:mixAdef}, be solutions of the minimization problem \eref{eq:optcontmix} with $\mmm[I]_\e^\gamma(u,w)$ defined by  \eref{eq:mixPFmismatchabs} . Then, $u$ is also a solution of the minimization problem \eref{eq:optcont} with $\mmm[I]^\gamma_\e(u)$ defined by \eref{eq:PFmismatchabs} and $\mmm[A]$ defined by \eref{eq:contAdef}.
\end{thm}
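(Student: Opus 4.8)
The plan is to show that the substitution $w=-\lap u$ sets up a correspondence between the admissible set $\mmm[A]$ of \eref{eq:contAdef} and the mixed admissible set $\mmm[A]_\Delta$ of \eref{eq:mixAdef} under which the functionals \eref{eq:PFmismatchabs} and \eref{eq:mixPFmismatchabs} take equal values, and then to transport the minimizer across this correspondence. Since the data misfit term depends only on $u$ and is identical in both functionals, the only point is to reconcile the regularization terms $\sigma\mmm[J]^\gamma_\e(u)$ and $\sigma\mmm[J]^\gamma_\e(u,w)$, which differ precisely in that the former uses $(\lap u)^2$ and the latter uses $w^2$; these agree as soon as $w=-\lap u$.

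First I would treat the forward inclusion $\mmm[A]_\Delta \to \mmm[A]$. Given $(u,w)\in\mmm[A]_\Delta$, the domain hypotheses ($\p\Omega$ being $C^2$, or Lipschitz with $\Omega$ convex) together with elliptic regularity for $-\lap u = w$ with $w\in L^2(\Omega)$ --- exactly as invoked in the proof of Theorem~\ref{thm:mixminexist} --- yield $u\in H^2(\Omega)$. Testing the weak relation $B(u,\zeta)=\langle w,\zeta\rangle$ against $\zeta\in C_c^\infty(\Omega)$ and integrating by parts identifies $w=-\lap u$ almost everywhere. Returning with this to general $\zeta\in H^1_{0,D}(\Omega)$ and integrating by parts once more, the interior terms cancel and leave the boundary integral $\int_{\dN} \frac{\p u}{\p n}\,\zeta\dd s_x = 0$; since the traces of $\zeta\in H^1_{0,D}(\Omega)$ on $\dN$ range over a dense subset of $L^2(\dN)$, this forces $\frac{\p u}{\p n}=0$ on $\dN$ in the normal-trace sense. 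The Dirichlet constraint $u=\pm1$ on $\dD$ is inherited directly from $u\in H^1_D(\Omega)$, so $u\in\mmm[A]$.

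The reverse inclusion $\mmm[A]\to\mmm[A]_\Delta$ is the easy direction: for $u\in\mmm[A]\subset H^2(\Omega)$ set $w:=-\lap u\in L^2(\Omega)$; integration by parts together with $\frac{\p u}{\p n}=0$ on $\dN$ gives $B(u,\zeta)=\langle w,\zeta\rangle$ for all $\zeta\in H^1_{0,D}(\Omega)$, so $(u,w)\in\mmm[A]_\Delta$. With the correspondence in hand and $w=-\lap u$ we have $\mmm[I]^\gamma_\e(u,-\lap u)=\mmm[I]^\gamma_\e(u)$. Thus if $(u,w)$ minimizes \eref{eq:optcontmix}, then for any competitor $v\in\mmm[A]$ the pair $(v,-\lap v)\in\mmm[A]_\Delta$ is admissible for the mixed problem and
\[
 \mmm[I]^\gamma_\e(u)=\mmm[I]^\gamma_\e(u,w)\leq \mmm[I]^\gamma_\e(v,-\lap v)=\mmm[I]^\gamma_\e(v),
\]
so $u$ solves \eref{eq:invprob}. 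The case $\dD=\emptyset$ is handled analogously, the only difference being that the compatibility constraint $\int_\Omega w\dd x=0$ in \eref{eq:mixANdef} is precisely the solvability condition needed to recover $u$ (up to the additive constant, which is immaterial here) from $w$ under the pure Neumann relation.

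The main obstacle is the boundary bookkeeping in the forward direction: one must know $u\in H^2(\Omega)$ before the normal derivative $\frac{\p u}{\p n}$ is even defined as a trace in $L^2(\dN)$, which is exactly why the theorem restricts to $C^2$ or convex Lipschitz domains, and one must verify that the boundary integral genuinely vanishes by exploiting the freedom of the $\dN$-traces of $H^1_{0,D}(\Omega)$ test functions rather than merely of interior test functions.
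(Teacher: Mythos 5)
Your proposal is correct and follows essentially the same route as the paper: elliptic regularity on the stated domain classes to obtain $u\in H^2(\Omega)$, identification of $w=-\lap u$ and recovery of the Neumann condition on $\dN$ by integration by parts against the admissible test functions, and then transport of the minimizer via the equality of the two functionals under $w=-\lap u$. If anything you are slightly more explicit than the paper on two points it leaves implicit --- the reverse inclusion $u\in\mmm[A]\Rightarrow(u,-\lap u)\in\mmm[A]_\Delta$ (needed to justify the competitor step) and the density of the $\dN$-traces used to conclude $\p u/\p n=0$ --- but the substance is the same.
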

\begin{proof}
 Let $(u,w)\in \mmm[A]_\Delta$. Standard regularity theorems for elliptic problems, gives that $u$ is not only in $H^1_D(\Omega)$ (resp. $H^1(\Omega)$), but is actually $u\in H^2_{loc}(\Omega)$. Furthermore, the boundary regularity is sufficient to extend this result to the boundary (see \cite[Chapter 2]{book:Gri11} for convex Lipschitz domain). In this case $w=-\lap u$ is well defined in $L^2(\Omega)$, so we show the boundary conditions are preserved. If $\dD \neq\emptyset$, we obtain through the relation that for any $\xi \in H^1_{0,D}(\Omega)$:
\[
  \int_\Omega w \xi \dd x =\int_\Omega \nabla u \cdot \nabla \xi \dd x = -\int_\Omega \lap u  \xi \dd x + \int_{\p\Omega_N} \xi\nabla u \cdot n \dd y,
\]
 and so as $w=-\lap u$ in $\Omega$, $\xi$ is arbitrary on $\p\Omega_N$ we obtain that $\p u/\p n=0$ on $\p\Omega_N$.If $\dD=\emptyset$, and $\int_\Omega w \dd x=0$, we know from the elliptic relation that $\forall \xi \in H^1(\Omega)$,
 \[
  \int_\Omega w \xi \dd x = \int_\Omega \nabla u \cdot \nabla \xi \dd x = -\int_\Omega \lap u \xi \dd x + \int_\Omega \xi \nabla u \cdot n \dd y, 
 \]
and setting $\xi=1$, and $w=-\lap u$ in $\Omega$ we obtain $\p u /\p n = 0$ on $\p\Omega$, So $u \in\mmm[A]$.  We have shown that for both sets of boundary conditions, we have $u\in \mmm[A]$ and so,
 \[
  \inf_{(\bar u,\bar w)\in\mmm[A]_\Delta} \mmm[I]^\gamma_\e(\bar u,\bar w)\leq \inf_{\bar u \in \mmm[A]} \mmm[I]^\gamma_\e(\bar u,-\lap \bar u) =\inf_{\bar u \in \mmm[A]} \mmm[I]^\gamma_\e(\bar u).
 \]
Thus a solution of the mixed problem is also a solution of the original problem. 
\end{proof}

\begin{rem}\label{rem:nondiff}
In the continuous setting we are unable to calculate derivatives (which would lead to critical points, optimality conditions of $\mmm[I]^\gamma_\e$ etc.), due to the lack of differentiability of the misfit functional with respect to $s(u)$ (briefly mentioned in Remark \ref{rem:findimprior}). Hence discussion of derivatives is delayed until \Sref{sec:discderiv}.
\end{rem}

\section{Discretization}
\subsection{Discrete forward problem} \label{sec:discfp}

For the discretization of \eref{eq:eik} -- \eref{eq:sonerbc} we follow \cite{DecEllSty11}. Here we recall the framework and some key results sufficient for our purpose. 
Assume that $\Omega$ is a convex polygonal domain. We take a regular quadrilateral grid of uniform grid size $h$, and so $\Omega_h = \Omega \cap \Z^2_h$ where $\Z^2_h = \{(h\alpha_1,h\alpha_2)\ |\ (\alpha_1,\alpha_2) \in \Z^2 \}$. We assume the point constraint \eref{eq:ptconstr} in the forward problem lies on a grid point, $x_0 \in \Omega_h$. We index with $\alpha\in\Z^2$, therefore we may say $x_0 = x_{\alpha_0}$ for some $\alpha_0\in \Z^2$. 

Define the discrete boundary $\p\Omega_h\coloneqq \big\{y \not\in \Omega_h \ | \ y=x+(-1)^ie_j, \ x\in\Omega_h,\ i,j \in \{1,2\} \big\}$ for standard basis vectors $e_j$ and set  $\bar \Omega_h = \Omega_h \cup \p\Omega_h$. Finally, we take the set of neighbours $\mmm[N]_\alpha$ about an interior point $x_\alpha$ to be the set of neighbours in $\bar\Omega_h$, but for boundary $x_\alpha$ we take the set of neighbours in $\Omega_h$ (i.e interior neighbours only).  

In this way the \eref{eq:eik} --  \eref{eq:sonerbc} can be discretized using the monotone finite difference scheme,
\begin{eqnarray}
 \sum_{x_\beta \in \mmm[N]_\alpha} \Big[\Big(\frac{\mmm[T]_\alpha - \mmm[T]_\beta}{|x_\alpha-x_\beta|}\Big)^+\Big]^2  = s(u_h(x_\alpha))^2,\qquad & \textrm{ if } x_\alpha \in \bar\Omega_h \setminus \{x_{\alpha_0}\}, \label{eq:FDeik} \\
 \mmm[T]_{\alpha_0} = 0\label{eq:FDptconstr},&
\end{eqnarray}
where $\mmm[T]_\delta = \mmm[T](x_\delta)$, $u_h:\Omega_h \to \R$ continuous interpolation of $u$ onto $\Omega_h$ and $y^+ = \max(y,0)$, and 
\[\fl \indent
 s:[-1,1] \to [s_{\min},s_{\max} ],\qquad s(u_h (x_\alpha))=\frac{s_{\max} - s_{\min}}{2}u_h(x_\alpha) +\frac{s_{\max} + s_{\min}}{2}.
\]

It is known \cite{DecEllSty11}, for $0<u\in C^0(\bar\Omega)$, equations \eref{eq:FDeik} -- \eref{eq:FDptconstr} gives a unique nonnegative (uniform) Lipschitz continuous solution $\mmm[T]:\bar\Omega_h \to \R_{\geq 0}$ which converges as the mesh size decreases, 
 \begin{equation}\label{eq:approxeik}
 \max_{x_\alpha \in \Omega_h} |\mmm[T]_\alpha - T(x_\alpha)|\leq C\sqrt{h}.
\end{equation}
This equation is solved using the Fast Marching Method \cite{Set96,Set99,SetVla00,DecEllSty11}, an efficient and robust technique terminating in a finite number of steps.  We label the discrete FMM solution map taking $u_h$ and obtaining the discrete solution $\mmm[T]$, as $\mmm[G]_h(s(u_h))$.

\subsection{The misfit functional and prior space}

We now consider a discrete  analogue of problem \eref{eq:invprob}. 
We discretize the misfit functional boundary integral \eref{eq:mismatchbdry} to give $\mmm[I]_h$. More precisely, we take $\mathcal{O}_h = L^2_h(\Gamma_h)$, $\Gamma_h\subset\p\Omega_h$, a finite difference approximation of $L^2(\Gamma_h)$. Let $\mmm[T]:\bar\Omega_h \to \R_{\geq 0}$, be the solution of \eref{eq:FDeik} -- \eref{eq:FDptconstr} with slowness function $s(u_h)$. Then,
\begin{equation}\label{eq:mismatchbdrydisc}
 \mmm[I]_h(u_h)\coloneqq \frac{1}{2}\sum_{i=1}^N h_{\alpha_i}|\mmm[T](x_{\alpha_i}) - T_{obs}(x_{\alpha_i})|^2,
\end{equation}
where $x_{\alpha_i}$ are ordered boundary grid points in $\Gamma_h$ and $h_{\alpha_i}=\frac{1}{2}(|x_{\alpha_i}-x_{\alpha_{i-1}}|+|x_{\alpha_i}-x_{\alpha_{i+1}}|)$. The observations are assumed to be at grid points, and henceforth denoted by $y_h=\{T_{obs}(x_{\alpha_i})\}$.

\subsection{Discrete inverse problem} \label{sec:discip}

We work with a finite element formulation for approximating $u$ and the regularization functional. To keep the distinction between the discretization of forward and inverse problems, we use $\hbar$ as the numerical approximation parameter, such that the scheme converges as $\hbar \to 0$. 
For $\hbar\geq h>0$, we decompose the polygonal domain $\Omega$ into a union of triangles, $\mathscr{T}_\hbar$, whose  diameters are  bounded below by $\hbar$. 
Working directly by discretizing \eref{eq:invprob},  would require an $H^2$ conforming finite element space to approximate the function $u$, (such as Lagrangian $\mathbb{P}^3$ for $C^0$ elements, or Bell triangular for $C^1$ elements), unfortunately these are computationally costly, see \cite{book:BreFor12}. Therefore, we present a formulation based on $H^1$ conforming elements and work with a discretization based on the mixed formulation \eref{eq:optcontmix}. We choose to use $P^1$ Lagrangian finite elements, given by piecewise linear nodal basis functions $\{\phi_l\}$ for every $x_l \in \Omega$ and satisfying $\phi_l(x_k) = \delta_{kl}$. Define this finite element space,  
\[
 S_\hbar(\Omega) \coloneqq \big\{ v \in C^0(\Omega) \ | \  v_h |_{\mmm[T]} \in \mathbb{P}^1, \mmm[T] \in \mmm[T]_\hbar\big\},
\]
where the space of polynomials can be written as 
\[
 \mathbb{P}^1 = \big\{ v \ | \ v(x) = \sum^L_{l=1} v_l \phi_l(x) \textrm{ where } v_l = v(x_l), \ \  \forall x_l \in \Omega \big\}  .
\]

\begin{rem}
 We use a finer numerical mesh parameter for the forward problem than the inverse problem i.e $\hbar \le h$.  This is because our problem is to approximate $u$ and the solution of the forward problem is required for resolution. 
In proofs we often identify the nodes of both finite difference and finite element grids, so that we do not have to include the forward and backward interpolation operators.
\end{rem}

\label{sec:discmix}
We define the discrete functional, 
\begin{equation}\label{eq:mixdiscIPFabs}\fl \indent
 \mmm[I]^\gamma_{\e,h,\hbar}(u_\hbar,w_\hbar) \coloneqq \frac{1}{2}\|y_h- \mmm[G]_h(s(u_\hbar))\|_{\mmm[O]_h}^2 +  \sigma \int_{\Omega}  \gamma\frac{\e^3}{2}w_\hbar^2 +\frac{\e}{2}|\nabla u_\hbar|^2 + \frac{1}{\e}\Psi(u_\hbar)  \dd x,
\end{equation}
and the corresponding optimization problem 
\begin{equation}\label{eq:mixdiscoptcont}
 \textrm{Find } (u_\hbar,w_\hbar)\coloneqq \stackrel[(\tilde u_\hbar,\tilde w_\hbar) \in {\mmm[A]_{\Delta,\hbar}}]{} {\arg\min} \ \mmm[I]^\gamma_{\e,h,\hbar}(\tilde u_\hbar,\tilde w_\hbar). 
\end{equation}
We treat $\dD=\emptyset$  or $\dD \neq \emptyset$ separately. Define a bilinear form on $S_\hbar (\Omega)\times S_\hbar(\Omega)$ and discrete $L^2$ inner product $\langle \cdot, \cdot \rangle_\hbar$:
\[
 B_\hbar(z_\hbar,v_\hbar)\coloneqq \int_{\Omega} \nabla z_\hbar \cdot \nabla v_\hbar\dd x,  \qquad \langle z_\hbar, v_\hbar \rangle_\hbar = \int_\Omega z_\hbar v_\hbar\dd x 
\]
and denote the space 
\begin{eqnarray}
S_{D\hbar} &\coloneqq \{ u_\hbar \in S_\hbar(\Omega) \ | \ u_\hbar=0 \textrm{ on } \dD\}.\label{eq:h1dh}
\end{eqnarray}
Then we may define 
\begin{equation}\label{eq:mixdiscAdefabs}
 \mmm[A]_{\Delta, \hbar} \coloneqq \cases{ 
                              \mmm[A]_{\Delta,D\hbar}, & if  $\dD \neq \emptyset,$ \\
                              \mmm[A]_{\Delta,N\hbar}, & if  $\dD = \emptyset,$
                          }
\end{equation}
given by
\begin{eqnarray}\label{eq:mixdiscADdefabs}\fl
 \mmm[A]_{\Delta,D\hbar} \coloneqq \Big\{ (u_\hbar,w_\hbar) \in S_{D\hbar}(\Omega) \times S_\hbar(\Omega)\ \Big| \ 
 B_\hbar(u_\hbar,\zeta_\hbar) = \langle w_\hbar,\zeta_\hbar \rangle_\hbar,\  \forall \zeta_\hbar \in S_{D\hbar}(\Omega) \Big\} ,\\ \fl
 \mmm[A]_{\Delta,N\hbar} \coloneqq \Big\{ (u_\hbar,w_\hbar) \in S_{\hbar}(\Omega) \times S_\hbar(\Omega)\ \Big| \ \int_{\Omega} w_\hbar \dd x= 0,  \nonumber\\ \fl \indent \qquad
 \phantom{\coloneqq \Big\{(u_\hbar,w\hbar) \in S_{\hbar}(\Omega) \times L^2_\hbar(\Omega)\ \Big| \  \ } B_\hbar(u_\hbar,\zeta_\hbar) = \langle w_\hbar,\zeta_\hbar \rangle_\hbar,\  \forall \zeta_\hbar \in S_{\hbar}(\Omega) \Big\}.\label{eq:mixdiscANdefabs} 
\end{eqnarray}

\begin{thm}\label{thm:fulldiscmixminexist}
 Let $\Omega \subset \R^d$ be a convex polygonal domain. Define $\mmm[A]_{\Delta,\hbar}$ as in \eref{eq:mixdiscAdefabs}, and for any $\gamma, \e>0$ define $\mmm[I]^\gamma_{\e,h,\hbar}$ as in \eref{eq:mixdiscIPFabs}, for a convergent discretization $\mmm[G]_h$ of the forward problem $\mmm[G]$ \eref{eq:eik} -- \eref{eq:sonerbc} and data $y\in \mmm[O]$. Then there exists $(u_\hbar,w_\hbar)\in\mmm[A]_{\Delta,\hbar}$ such that $\mmm[I]^\gamma_{\e,h,\hbar}(u_\hbar,w_\hbar) = \min_{(\tilde u_\hbar,\tilde w_\hbar) \in \mmm[A]_{\Delta,\hbar}}\mmm[I]^\gamma_{\e,h,\hbar}(\tilde u_\hbar, \tilde w_\hbar)$.
 Moreover, every sequence $(u_{\hbar_k},w_{\hbar_k})_k$ with $\hbar_k\searrow 0$ has a subsequence  such that $u_{\hbar_k} 
$ converges  strongly in $H^1(\Omega)$ and $w_{\hbar_k}$ converges weakly in $L^2(\Omega)$ to a minimizer of $I^\gamma_{\e}(\cdot,\cdot)$, given by \eref{eq:mixPFmismatchabs}.
\end{thm}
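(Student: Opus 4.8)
The plan is to treat the two assertions separately: first the existence of a discrete minimizer for each fixed $\hbar$, and then the convergence of the minimizers as $\hbar_k\searrow 0$ by a compactness plus $\Gamma$-convergence argument, reusing the coercivity and lower-semicontinuity estimates already established for Theorem~\ref{thm:mixminexist}.

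For fixed $\hbar$, since $S_{\hbar}(\Omega)$ is finite dimensional the admissible set is essentially finite dimensional: the relation $B_\hbar(u_\hbar,\zeta_\hbar)=\langle w_\hbar,\zeta_\hbar \rangle_\hbar$ determines $w_\hbar$ from $u_\hbar$ (a discrete Poisson solve, compatible in the Neumann case since $\int_\Omega w_\hbar\dd x=0$ follows from testing with $\zeta_\hbar=1$), and the obstacle potential \eref{eq:doubobs} forces $u_\hbar(\Omega)\subset[-1,1]$, i.e. finitely many affine constraints on the nodal values. I would therefore restrict to the subset $\tilde\mmm[A]_{\Delta,\hbar}$ where $u_\hbar$ has nodal values in $[-1,1]$; this is a closed and bounded, hence compact, subset of the coefficient space. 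On it $\mmm[I]^\gamma_{\e,h,\hbar}$ is continuous, as the regularization is a polynomial in the coefficients and the misfit is continuous because the discrete Fast Marching solution $\mmm[G]_h$ depends Lipschitz-continuously on the nodal slowness values \cite{DecEllSty11}, so a minimizer exists by the Weierstrass theorem.

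For the refinement limit I would first derive uniform bounds. Evaluating the minimum against a fixed admissible family (e.g. the recovery sequence built below) bounds $\min\mmm[I]^\gamma_{\e,h_k,\hbar_k}$ uniformly in $k$, and the coercivity estimate of Theorem~\ref{thm:mixminexist} then yields $\|u_{\hbar_k}\|_{H^1(\Omega)}+\|w_{\hbar_k}\|_{L^2(\Omega)}\le C$. Passing to a subsequence, $u_{\hbar_k}\weakto u$ in $H^1(\Omega)$, $w_{\hbar_k}\weakto w$ in $L^2(\Omega)$, and by Rellich $u_{\hbar_k}\to u$ strongly in $L^2(\Omega)$. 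To see $(u,w)\in\mmm[A]_\Delta$, the pointwise bound and the boundary conditions pass to the limit by strong $L^2$ convergence and trace continuity, while the constraint passes to the limit by testing with $\zeta_{\hbar_k}\to\zeta$ (density of the finite element spaces), pairing weak convergence of $\nabla u_{\hbar_k}$ against strong convergence of $\nabla\zeta_{\hbar_k}$ and $w_{\hbar_k}\weakto w$ against $\zeta_{\hbar_k}\to\zeta$ in $L^2$; in the Neumann case $\int_\Omega w\dd x=0$ survives since constants lie in $L^2$. I would then upgrade to strong $H^1$ convergence exactly as in Theorem~\ref{thm:mixminexist}, now taking the discrete test function $\zeta_{\hbar_k}=u_{\hbar_k}-g_{\hbar_k}\in S_{D\hbar_k}$ with $g_{\hbar_k}\to g$ in $H^1$: the identity $\int_\Omega|\nabla u_{\hbar_k}|^2 = \langle w_{\hbar_k},\zeta_{\hbar_k}\rangle_\hbar + \int_\Omega\nabla u_{\hbar_k}\cdot\nabla g_{\hbar_k}$ passes to the limit $\int_\Omega|\nabla u|^2$ via the limiting constraint, giving norm convergence and hence $u_{\hbar_k}\to u$ in $H^1(\Omega)$. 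With strong $H^1$ and $L^2$ convergence, the gradient and $\Psi$ terms converge, while the $w^2$ term is weakly lower semicontinuous, so the regularization satisfies the $\liminf$ inequality.

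The genuinely new difficulty, and the main obstacle, is the misfit term, where the forward and inverse discretization parameters $h\le\hbar$ must be reconciled. Writing $\mmm[G]_{h_k}(s(u_{\hbar_k}))-\mmm[G](s(u)) = [\mmm[G]_{h_k}(s(u_{\hbar_k}))-\mmm[G](s(u_{\hbar_k}))] + [\mmm[G](s(u_{\hbar_k}))-\mmm[G](s(u))]$, the second bracket is handled as in Theorem~\ref{thm:mixminexist} by the strong $H^1$ convergence of $u_{\hbar_k}$ together with continuity of the continuous solution operator, the slownesses converging strongly enough because the limit lies in $H^2\hookrightarrow C^0$ and the trace of $s(u_{\hbar_k})-s(u)$ along the bounded family of near-optimal rays is controlled. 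The first bracket is the new term: it vanishes by the forward consistency \eref{eq:approxeik}, applied uniformly over the sequence, which is legitimate since every $s(u_{\hbar_k})$ is valued in $[s_{\min},s_{\max}]$ with a uniformly bounded solution-operator Lipschitz constant; the subtle point is that $\sqrt{h_k}$ convergence of the forward scheme must dominate along the coupled limit $h_k\le\hbar_k\to0$, and controlling this uniformly rather than pointwise in $u$ is where the argument is most delicate. Finally I would construct a recovery sequence for a continuous minimizer $(u^\ast,w^\ast)$ of $\mmm[I]^\gamma_\e$, taking $\bar u_{\hbar_k}$ the nodal interpolant of $u^\ast$ (which preserves the range $[-1,1]$) and $\bar w_{\hbar_k}$ the associated discrete Laplacian, and show $\mmm[I]^\gamma_{\e,h_k,\hbar_k}(\bar u_{\hbar_k},\bar w_{\hbar_k})\to\mmm[I]^\gamma_\e(u^\ast,w^\ast)$ using finite element approximation and consistency of the discrete Laplacian (strong $L^2$ convergence $\bar w_{\hbar_k}\to w^\ast$). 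Chaining $\mmm[I]^\gamma_\e(u,w)\le\liminf_k\mmm[I]^\gamma_{\e,h_k,\hbar_k}(u_{\hbar_k},w_{\hbar_k})\le\limsup_k\mmm[I]^\gamma_{\e,h_k,\hbar_k}(\bar u_{\hbar_k},\bar w_{\hbar_k})=\mmm[I]^\gamma_\e(u^\ast,w^\ast)=\min\mmm[I]^\gamma_\e$ and using $(u,w)\in\mmm[A]_\Delta$ forces equality, so $(u,w)$ is a minimizer.
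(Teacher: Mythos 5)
Your overall architecture matches the paper's: finite-dimensional compactness for existence at fixed $\hbar$, uniform bounds and weak compactness for the sequence of minimizers, the upgrade to strong $H^1(\Omega)$ convergence via the identity $\int_\Omega|\nabla u_{\hbar_k}|^2\dd x=\langle w_{\hbar_k},u_{\hbar_k}\rangle_{\hbar_k}\to\int_\Omega w^*u^*\dd x=\int_\Omega|\nabla u^*|^2\dd x$, and the closing liminf/limsup chain against a recovery sequence. But there is a genuine gap at exactly the point you yourself flag as ``most delicate'': the convergence of the misfit term. Strong $H^1(\Omega)$ convergence of $u_{\hbar_k}$ does not control $\mmm[G](s(u_{\hbar_k}))-\mmm[G](s(u^*))$. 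The traveltime is an infimum of $\int_0^1 s(\xi(r))|\xi'(r)|\dd r$ over paths, i.e.\ of line integrals of $s$ along one-dimensional curves, and these are not bounded by $\|s(u_{\hbar_k})-s(u^*)\|_{L^2(\Omega)}$ or even $\|\cdot\|_{H^1(\Omega)}$ in $d=2,3$; the natural modulus of continuity of the solution operator is with respect to $\|s_1-s_2\|_{L^\infty(\Omega)}$. Your proposed repair --- that ``the trace of $s(u_{\hbar_k})-s(u)$ along the bounded family of near-optimal rays is controlled'' --- is circular (the rays depend on the slowness) and in any case traces of $H^1$ functions onto curves are not controlled uniformly over all Lipschitz curves. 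Note also that the limit $u^*\in H^2(\Omega)\hookrightarrow C^0(\bar\Omega)$ does not help directly, since the approximants $u_{\hbar_k}$ are only piecewise linear and a priori only $H^1$-convergent.

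The missing idea, which is the one genuinely new ingredient in the paper's proof relative to Theorem~\ref{thm:mixminexist}, is to obtain \emph{uniform} convergence $u_{\hbar_k}\to u^*$ in $L^\infty(\Omega)$ by exploiting the elliptic structure of the constraint rather than the energy. Writing $u_{\hbar}=\mathcal K_{\hbar}w_{\hbar}$ and $u^*=\mathcal K w^*$ for the discrete and continuous solution operators of $-\Delta(\cdot)=w$ with the given boundary conditions, one decomposes $u_\hbar-u^*=(\mathcal K_\hbar w_\hbar-\mathcal K w_\hbar)+(\mathcal K w_\hbar-\mathcal K w^*)$. The first term tends to zero uniformly by the finite element error bound $\|\mathcal K\eta-\mathcal K_\hbar\eta\|_{L^\infty(\Omega)}\le C\hbar^{2-d/2}\|\eta\|_{L^2(\Omega)}$ together with the uniform $L^2$ bound on $w_\hbar$; the second tends to zero uniformly because $\mathcal K w_\hbar\weakto\mathcal K w^*$ weakly in $H^2(\Omega)$ by elliptic regularity and $H^2(\Omega)$ embeds compactly into $C^0(\bar\Omega)$ for $d\le 3$. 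With uniform convergence of $s(u_{\hbar_k})$ in hand, both of your brackets close at once: stability of monotone approximations of viscosity solutions gives $\mmm[G]_{h_k}(s(u_{\hbar_k}))\to\mmm[G](s(u^*))$ uniformly, and the misfit converges. The remainder of your argument then goes through as written.
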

\begin{proof}

As we work over a finite dimensional space, from the proof of Theorem \ref{thm:mixminexist} it is straightforward to deduce the coercivity and lower semi continuity of $\mmm[I]^\gamma_{\e,h,\hbar}$. Thus there exists a minimum of $\mmm[I]^\gamma_{\e,h,\hbar}$.  

The cases for $\p_D\Omega \neq \emptyset$ and $\p_D\Omega =\emptyset$ follow identically, and so we make no reference to the boundary conditions for $u$. We also assume for brevity that the nodes for the finite difference and finite element grids are identified (one may rewrite the proof with interpolation operators between them). 

Consider a sequence of minimizers $(u_{\hbar_{k}},w_{\hbar_{k}}) \in \mmm[A]_{\Delta,\hbar_k}$ of $\mmm[I]^\gamma_{\e,h_k,\hbar_k}$, for $h_k= \hbar_k$ with $\hbar_k \to 0$ as $k\to \infty$. Then by construction,  $(u_{\hbar_k},w_{\hbar_k})$ is bounded in $H^1(\Omega)\times L^2(\Omega)$.  There exists a subsequence relabeled as $(u_{\hbar_{k}},w_{\hbar_k})$ and some $(u^*,w^*)\in \mmm[A]_\Delta$ (as defined in \eref{eq:mixADdef}) such that, as $k\to \infty$:
\begin{eqnarray*}
 u_{\hbar_k} &\to u^* \textrm{ weakly in } H^1(\Omega)\label{eq:mixstrh1h} ,\\
 u_{\hbar_k} &\to u^* \textrm{ strongly in } L^2(\Omega) \label{eq:mixstrl2h}\\
 w_{\hbar_k} &\to w^* \textrm{ weakly in } L^2(\Omega)\label{eq:mixstrl2hw}
\end{eqnarray*}
For convenience we now drop the subscript $k$ in `$\hbar_k$' but note that in the following we mean  $(u_\hbar,w_\hbar)$ to denote a subsequence. The elliptic relation of $B_\hbar(u_\hbar,\zeta_\hbar) = \langle w_\hbar,\zeta_\hbar \rangle_\hbar$,  holds for any $\zeta_{\hbar} \in S_{\hbar}(\Omega)$ (resp $S_{D\hbar}$) so choose $\zeta_{\hbar} = u_{\hbar}$. Then as $\hbar \to 0$:
\begin{equation}\fl \indent
 \int_{\Omega} |\nabla u_{\hbar}|^2= \int_{\Omega} w_{\hbar}u_{\hbar}\dd x  \to \int_{\Omega} w^* u^* \dd x = \int_{\Omega} |\nabla u^*|^2\dd x,
\end{equation}
and therefore $u_{\hbar}\to u^*$ strongly in $H^1(\Omega)$. Furthermore elliptic regularity implies $u^*\in H^2(\Omega)$.    Denote by $\mathcal K$ and $\mathcal K_\hbar$ the solution operators for
 the elliptic relation $-\Delta \mathcal K \eta =\eta$  and its finite element approximation with either the Dirichlet or Neuman conditions  with appropriate data  so that
$$u^*=\mathcal K w^*~\mbox{and}~ u_\hbar=\mathcal K_\hbar w_\hbar.$$
By elliptic regularity and  standard finite element theory we have 
$$||\mathcal K \eta||_{H^2(\Omega)}\le C||\eta||_{L^2(\Omega)}~\mbox{and}~||\mathcal K\eta - \mathcal K_\hbar \eta||_{L^\infty(\Omega)}\le C\hbar^{2-\frac{d}{2}}||\eta||_{L^2(\Omega)}.$$
Decomposing
$$u_\hbar-u^*= (\mathcal K_\hbar w_\hbar-\mathcal Kw_\hbar) +(\mathcal Kw_\hbar-\mathcal Kw^*),$$
we see that the first term on the right converges to zero uniformly because of the uniform $L^2(\Omega)$ bound on $w_\hbar$ and the finite element  $L^\infty(\Omega)$ error  bound. Turning to the second term on the right, we observe that $\mathcal K w_h$ converges weakly in $H^2(\Omega)$  because of elliptic regularity and  the weak limit is  $\mathcal K w^*$. Thus by compact embedding we see that $\mathcal Kw_h-\mathcal K w^*$ converges to zero uniformly. Thus $u_\hbar$ converges uniformly to $u^*$. Stability results for approximations of viscosity solutions imply that
$\mmm[G]_{h}(s(u_{h}))$ converges uniformly to $\mmm[G](s(u^*))$. It follows that the misfit functional converges
$$\mathcal I_h(u_h)\rightarrow \mathcal I(u^*).$$ 

We may now prove the claim that that $(u^*,w^*)$ is a minimum of $\mmm[I]^\gamma_{\e}$, we see this by taking $(v,z) \in \mmm[A]_\Delta$ and a sequence $(v_k,z_k) \to (v,z)$ strongly in $H^1(\Omega)\times L^2(\Omega)$. This may be chosen using suitable interpolations. By definition 
 $\mmm[I]^\gamma_{\e,h_k,{\hbar_k}}(u_{\hbar_k},w_{\hbar_k})\leq \mmm[I]^\gamma_{\e,h_k,{\hbar_k}}(v_{\hbar_k},z_{\hbar_k})$ for all $k$. Convergence of the misfit functional  and the weak lower semi continuity of the functional $\mmm[I]^\gamma_{\e}$ (established in Theorem \ref{thm:mixminexist}) implies
\begin{eqnarray}\fl \indent
 \mmm[I]^\gamma_{\e}(u^*,w^*) \leq \stackrel[k\to\infty]{}{\lim\inf}\ \mmm[I]^\gamma_{\e,h_k,{\hbar_k}}(u_{\hbar_k},w_{\hbar_k})& \leq \stackrel[k\to\infty]{}{\lim\sup}\ \mmm[I]^\gamma_{\e,h_k,{\hbar_k}}(u_{\hbar_k},w_{\hbar_k})\nonumber\\
 &\leq \lim_{k\to \infty} \mmm[I]^\gamma_{\e,h_k,{\hbar_k}}(v_k,z_k) =\mmm[I]^\gamma_{\e}(v,z).\label{eq:mixlscIh}
\end{eqnarray}
Thus $\mmm[I]^\gamma_{\e}(u^*,w^*) = \min_{(v,z) \in \mmm[A]_\Delta}\mmm[I]^\gamma_{\e}(v,z)$.

\end{proof}

\subsection{The discrete derivative}\label{sec:discderiv}
The forward problem \eref{eq:eik} -- \eref{eq:sonerbc} is not differentiable with respect to the state variable. However using an argument in \cite{DecEllSty11} we  recover  differentiability for the discrete misfit functional associated to the discretization \eref{eq:eik} -- \eref{eq:sonerbc}. We identify finite difference and finite element nodes (as one can rewrite with interpolation operators) then for $(u_\hbar,w_\hbar)\in \mmm[A]_{\Delta,\hbar}$, we obtain $u_\hbar=\sum_{i=1}^L u_i\phi_i\in \mmm[A]_{\Delta,\hbar}$ for $\{\phi_i\}$ basis functions of $S_\hbar$ and get the form discussed in  \cite{DecEllSty11} . 

We define an adjoint problem associated with the discrete solution $\mmm[T]$ of \eref{eq:FDeik} -- \eref{eq:FDptconstr}, with discrete slowness function $s(u_\hbar)$. Find $P \colon \bar\Omega_h \setminus \{ x_{\alpha_0}\}$ so that
\begin{eqnarray}\label{eq:discadj1}\fl 
  \sum_{x_\beta \in \mmm[N]_\alpha} \Big(\frac{\mmm[T]_\alpha - \mmm[T]_\beta}{h_{\alpha,\beta}}\Big)^+\frac{P_\alpha}{h_{\alpha,\beta}} - \Big(\frac{\mmm[T]_\beta - \mmm[T]_\alpha}{h_{\alpha,\beta}}\Big)^+\frac{P_\beta}{h_{\alpha,\beta}}=0,  &\hspace{15pt} x_\alpha \in \Omega_h\setminus\{x_{\alpha_0}\}, \\ \fl 
    \sum_{x_\beta \in \mmm[N]_\alpha} \Big(\frac{\mmm[T]_\alpha - \mmm[T]_\beta}{h_{\alpha,\beta}}\Big)^+\frac{P_\alpha}{h_{\alpha,\beta}} - \Big(\frac{\mmm[T]_\beta - \mmm[T]_\alpha}{h_{\alpha,\beta}}\Big)^+\frac{P_\beta}{h_{\alpha,\beta}}= \frac{h_\alpha}{h^2}(T_{obs}-\mmm[T]_\alpha), &\hspace{15pt}x_\alpha \in \p\Omega_h,\label{eq:discadj2}
\end{eqnarray}
where $h_{\alpha,\beta}= |x_\alpha - x_\beta|$ and the discrete misfit functional is given by \eref{eq:mismatchbdrydisc}.
\begin{prop}\label{thm:dermismatch}
Let $u_\hbar=\sum_{i=1}^L u_i\phi_i \in \mmm[A]_\hbar$, and $m\in{1,\dots,L}$. Let $\mmm[I]_h$ be defined by \eref{eq:mismatchbdrydisc}, then,
\begin{equation}\label{eq:discderI}
 \frac{\p \mmm[I]_h}{\p u_m}(u_\hbar) = -h^2 \frac{s_{\max} - s_{\min}}{2}\sum_{x_\alpha \in \bar \Omega_h\setminus \{x_{\alpha_0}\}} P_\alpha s(u_\hbar)(x_\alpha)\phi_m(x_\alpha),
\end{equation}
where $P$ is the solution of \eref{eq:discadj1} -- \eref{eq:discadj2}.
\end{prop}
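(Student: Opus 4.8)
The plan is to regard the discrete misfit \eref{eq:mismatchbdrydisc} as a function of the nodal traveltimes $\mmm[T]_\delta$, which are themselves implicitly defined functions of the coefficients $u_1,\dots,u_L$ through the discrete Eikonal system \eref{eq:FDeik}, and then to eliminate the expensive sensitivities $\partial\mmm[T]_\delta/\partial u_m$ by an adjoint argument. First I would record the differentiability that legitimises the calculation: although $y\mapsto y^+$ is not differentiable at the origin, the map $y\mapsto[y^+]^2$ is $C^1$ with derivative $2y^+$, so the residual
\[
F_\alpha(\mmm[T],u)\coloneqq\sum_{x_\beta\in\mmm[N]_\alpha}\Big[\Big(\frac{\mmm[T]_\alpha-\mmm[T]_\beta}{|x_\alpha-x_\beta|}\Big)^+\Big]^2-s(u_h(x_\alpha))^2
\]
defining \eref{eq:FDeik} is continuously differentiable jointly in $(\mmm[T],u)$. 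Invoking the result of \cite{DecEllSty11} that the discrete solution operator $u\mapsto\mmm[T]$ is differentiable (off the measure-zero set of configurations with upwind ties), the chain rule produces $\partial\mmm[T]_\delta/\partial u_m$ and hence
\[
\frac{\partial\mmm[I]_h}{\partial u_m}=\sum_\delta\frac{\partial\mmm[I]_h}{\partial\mmm[T]_\delta}\,\frac{\partial\mmm[T]_\delta}{\partial u_m},
\]
where, by \eref{eq:mismatchbdrydisc}, the factor $\partial\mmm[I]_h/\partial\mmm[T]_\delta$ vanishes except at the receiver nodes $x_{\alpha_i}$, at which it equals $h_{\alpha_i}(\mmm[T]_{\alpha_i}-T_{obs})$.

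Next I would differentiate the constraint $F_\alpha=0$ to obtain the sensitivity relation $\sum_\delta(\partial F_\alpha/\partial\mmm[T]_\delta)(\partial\mmm[T]_\delta/\partial u_m)=-\partial F_\alpha/\partial u_m$, multiply it by an undetermined multiplier $P_\alpha$, sum over $x_\alpha\in\bar\Omega_h\setminus\{x_{\alpha_0}\}$, and interchange the order of summation. Computing $\partial F_\alpha/\partial\mmm[T]_\delta$ explicitly — the diagonal contribution $\frac{2}{h_{\alpha,\beta}}\big(\frac{\mmm[T]_\alpha-\mmm[T]_\beta}{h_{\alpha,\beta}}\big)^+$ and the off-diagonal contribution with the opposite sign, combined with the grid's symmetric neighbour relation — shows that $\sum_\alpha P_\alpha(\partial F_\alpha/\partial\mmm[T]_\delta)$ reproduces, up to the overall factor $2$ coming from the square, the left-hand side of \eref{eq:discadj1}--\eref{eq:discadj2}. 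Requiring $P$ to be chosen so that these terms collapse the sensitivities onto $\partial\mmm[I]_h/\partial u_m$ forces $P$ to solve exactly the adjoint system \eref{eq:discadj1}--\eref{eq:discadj2}; the factor $2$ and the weights $h_\alpha/h^2$ built into the right-hand side of \eref{eq:discadj2} amount to rescaling the textbook adjoint by $-2/h^2$. Carrying the constants through yields $\partial\mmm[I]_h/\partial u_m=\frac{h^2}{2}\sum_\alpha P_\alpha\,(\partial F_\alpha/\partial u_m)$.

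Finally I would compute the parameter sensitivity of the residual. Because $s(u)=\frac{s_{\max}-s_{\min}}{2}u+\frac{s_{\max}+s_{\min}}{2}$ is affine and $u_h(x_\alpha)=\sum_i u_i\phi_i(x_\alpha)$, we have $\partial s(u_h(x_\alpha))/\partial u_m=\frac{s_{\max}-s_{\min}}{2}\phi_m(x_\alpha)$, so that $\partial F_\alpha/\partial u_m=-(s_{\max}-s_{\min})\,s(u_h(x_\alpha))\,\phi_m(x_\alpha)$. Substituting this into the identity of the previous paragraph gives \eref{eq:discderI}. I expect the main obstacle to be not the algebra but the justification of the differentiability of the FMM map underlying the chain rule: the $(\cdot)^+$ operations make $\mmm[T]$ only piecewise smooth in $s$, differentiable off a lower-dimensional set of ``tie'' configurations where the upwind stencil is ambiguous, and I would lean on \cite{DecEllSty11} for this, emphasising that the squaring in \eref{eq:FDeik} is precisely what restores $C^1$ regularity of each residual and makes the adjoint system well defined. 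A secondary point requiring care is the bookkeeping of the factors of $2$ and the quadrature weights $h_\alpha$ and $h^2$ that pin down the exact constant $-h^2\frac{s_{\max}-s_{\min}}{2}$ in \eref{eq:discderI}.
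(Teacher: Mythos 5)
Your computation is correct and the constants come out right, but your route is genuinely different from the paper's. The paper's proof is essentially a citation: it invokes \cite[Theorem 3.6]{DecEllSty11}, which already supplies the adjoint--state formula $\p \tilde{\mmm[I]}_h/\p s_m = -h^2\sum_{x_\alpha} P_\alpha s_h(x_\alpha)\psi_m(x_\alpha)$ for slowness functions of the form $s_h=\sum_k s_k\psi_k$, and then merely observes that an affine reparametrization $s(u_\hbar)=\frac{s_{\max}-s_{\min}}{2}\sum_i u_i\phi_i+\frac{s_{\max}+s_{\min}}{2}$ passes through the chain rule and contributes the extra factor $(s_{\max}-s_{\min})/2$. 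You instead rederive the content of that cited theorem: differentiating the residual of \eref{eq:FDeik}, introducing multipliers, and identifying the multiplier system with \eref{eq:discadj1}--\eref{eq:discadj2}. Your bookkeeping is consistent --- the stationarity condition matches the paper's adjoint system once one sets $\lambda=\frac{h^2}{2}P$ (absorbing the factor $2$ from the square and the $h_\alpha/h^2$ scaling of the data term), and $\p F_\alpha/\p u_m=-(s_{\max}-s_{\min})\,s(u_\hbar)(x_\alpha)\phi_m(x_\alpha)$ then delivers exactly \eref{eq:discderI}. What your version buys is self-containedness and an explicit statement of why the residual is $C^1$ (the map $y\mapsto[y^+]^2$); what it costs is that the genuinely delicate point --- differentiability of the discrete solution map itself, where a bare implicit-function argument does not apply because $\p F/\p\mmm[T]$ degenerates at nodes where all upwind differences vanish, and which in \cite{DecEllSty11} is handled by exploiting the causal ordering of the fast marching method --- is still delegated to the same reference in both proofs, so you have not removed the external dependence, only re-expanded the algebra around it. Two minor cautions: the neighbour sets $\mmm[N]_\alpha$ are not symmetric at the boundary (boundary nodes see only interior neighbours), so the summation-by-parts step in your interchange of sums needs the same boundary care as the cited theorem; and the sum in \eref{eq:discderI} runs over $\bar\Omega_h\setminus\{x_{\alpha_0}\}$, consistent with the exclusion of the constrained source node from the adjoint system.
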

\begin{proof}

From \cite[Theorem 3.6]{DecEllSty11} we trivially extend to linear functions: let $s_h=c_1\sum_{k=1}^K s_k \psi_k+c_2$, for  $\{\psi_k\}$ basis functions satisfying the properties of Remark  \ref{rem:findimprior} and $c_1,c_2$ constants. If we solve \eref{eq:FDeik} -- \eref{eq:FDptconstr} with right hand side $s_h^2$, define 
\[
  \tilde{\mmm[I]}_h(s_h)\coloneqq \frac{1}{2}\sum_{i=1}^N h_{\alpha_i}|\mmm[T](x_{\alpha_i}) - T_{obs}(x_{\alpha_i})|^2,
\]
and
\[\fl \indent
 \frac{\p \tilde{\mmm[I]}_h}{\p s_m}(s_h) = -h^2 \sum_{x_\alpha \in \bar \Omega_h\setminus \{x_{\alpha_0}\}} P_\alpha s_h(x_\alpha)\frac{\p s_h}{\p s_m}(x_\alpha) = -c_1h^2 \sum_{x_\alpha \in \bar \Omega_h\setminus \{x_{\alpha_0}\}} P_\alpha s_h(x_\alpha)\psi_k(x_\alpha).
\]
Take $\{\psi_i\}$ to be the the finite element basis functions $\{\phi_i\}$, and as we have assumed the finite difference and finite element nodes are identified, we define
\[
s_h \coloneqq s(u_\hbar) = \frac{s_{\max} - s_{\min}}{2}\sum_{i=1}^L  u_i \phi_i +\frac{s_{\max} + s_{\min}}{2},
 \]
and the result follows: $\p \mmm[I]_h/\p u_m(u_\hbar)=\p \tilde{\mmm[I]}_h/\p u_m(s_h)= \eref{eq:discderI}$.
\end{proof}

We use the elliptic relationships of \eref{eq:mixdiscADdefabs} --  \eref{eq:mixdiscANdefabs} to describe derivatives of $w_\hbar$. In particular $\p w_\hbar/\p u_m$ satisfies
\[
  \int_{\Omega} \frac{\p w_\hbar}{\p u_m} \zeta_\hbar \dd x = \int_{\Omega}\nabla \phi_m \cdot \nabla \zeta_\hbar \dd x \qquad \forall \zeta_\hbar \in S_{D\hbar}(\Omega) \ \ (\textrm{resp }S_{\hbar} (\Omega) ).
\]
As $w_\hbar=\sum_{i=1}^K w_l \phi_l$, where $\{\phi_l\}$ are a basis of $S_\hbar(\Omega)$, set $\zeta_\hbar = \phi_l$. The chain rule gives
\begin{equation}\label{eq:discw2dif}\fl 
  \int_{\Omega} \frac{\p(w_\hbar^2)}{\p u_m}=\int_{\Omega} \frac{\p(w_\hbar^2)}{\p w_\hbar} \frac{\p w_\hbar}{\p u_m} = 2\sum_{l=1}^Lw_l\int_{\Omega}\phi_l  \frac{\p w_\hbar}{\p u_m} \dd x = 2\sum_{l=1}^L w_l\int_{\Omega}\nabla \phi_m \cdot \nabla \phi_l \dd x.
\end{equation}

\begin{prop} Let $(u_\hbar=\sum_{l=1}^L u_i\phi_i,w_\hbar=\sum_{l=1}^L w_i\phi_i)\in\mmm[A]_{\Delta,\hbar}$ defined by \eref{eq:mixdiscAdefabs} and $m\in \{1,\dots,L\}$. Let $\mmm[I]_{\e,h,\hbar}$ be defined by \eref{eq:mixdiscIPFabs}. For $P:\Omega_h\setminus\{x_{\alpha_0}\} \to \R$, the solution for the adjoint equations \eref{eq:discadj1} -- \eref{eq:discadj2} for the discrete Eikonal equations \eref{eq:FDeik} -- \eref{eq:FDptconstr} with slowness $s(u_\hbar)$. Then 
\begin{eqnarray}\label{eq:mixdiscderivI} \fl 
 \frac{\p \mmm[I]^\gamma_{\e,h,\hbar}}{\p u_m}(u_\hbar,w_\hbar) =&\ -h^2 \frac{s_{\max} - s_{\min}}{2}\sum_{x_\alpha \in \Omega_h\setminus\{x_{\alpha_0}\}}P_\alpha s(u_\hbar)(x_\alpha)\phi_m(x_\alpha) \nonumber \\
 &\ +\sigma \sum^L_{l=1}  \int_{\Omega}  \gamma \e^3 w_l \nabla \phi_m\cdot\nabla \phi_l + \e u_l\nabla \phi_m \cdot \nabla \phi_l - \frac{1}{\e} u_l\phi_m \phi_l \dd x,
  \end{eqnarray} 
 where, 
 \begin{equation}\label{eq:mixdiscw}
 \sum_{k,l=1}^L w_l \int_\Omega \phi_k \phi_l \dd x =  \sum_{k,l=1}^Lu_l\int_{\Omega}\nabla \phi_k \cdot \nabla \phi_l \dd x.
 \end{equation}
 \end{prop}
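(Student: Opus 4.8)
The plan is to exploit the additive structure of $\mmm[I]^\gamma_{\e,h,\hbar}$ in \eref{eq:mixdiscIPFabs} as the discrete misfit plus the regularization $\sigma\mmm[J]^\gamma_\e$, and to differentiate each summand with respect to the nodal coefficient $u_m$. The key conceptual point is that on the constraint set $\mmm[A]_{\Delta,\hbar}$ the variable $w_\hbar$ is not independent of $u_\hbar$: the discrete elliptic relation $B_\hbar(u_\hbar,\zeta_\hbar)=\langle w_\hbar,\zeta_\hbar\rangle_\hbar$ forces $w_\hbar=w_\hbar(u_\hbar)$. Hence $\partial/\partial u_m$ must be read as a total derivative along the constraint manifold, tracking both the explicit dependence of the integrand on $u_m$ and the implicit dependence carried through $w_\hbar$.

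First I would dispatch the misfit term. Since the discrete slowness $s(u_\hbar)$ is the affine function of the coefficients displayed in the statement and the basis $\{\phi_i\}$ satisfies the hypotheses of Remark \ref{rem:findimprior}, Proposition \ref{thm:dermismatch} applies directly and supplies the first line of \eref{eq:mixdiscderivI} via \eref{eq:discderI}, with $P$ the adjoint solution of \eref{eq:discadj1}--\eref{eq:discadj2}. Next I would differentiate the two integral terms of $\sigma\mmm[J]^\gamma_\e$ that depend explicitly on $u_\hbar$. Writing $u_\hbar=\sum_l u_l\phi_l$, the gradient energy $\frac{\e}{2}\int_\Omega|\nabla u_\hbar|^2$ is a quadratic form in the coefficients whose $u_m$-derivative is $\e\sum_l u_l\int_\Omega\nabla\phi_m\cdot\nabla\phi_l$, while the double obstacle contribution $\frac{1}{\e}\int_\Omega\Psi(u_\hbar)=\frac{1}{2\e}\int_\Omega(1-u_\hbar^2)$ differentiates to $-\frac{1}{\e}\sum_l u_l\int_\Omega\phi_m\phi_l$. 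These produce the last two terms inside the integral of \eref{eq:mixdiscderivI}, and both computations are routine because $u_\hbar$ remains in $[-1,1]$, where $\Psi$ is smooth.

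The main obstacle is the term $\gamma\frac{\e^3}{2}\int_\Omega w_\hbar^2$, since $w_\hbar$ depends on $u_m$ only implicitly through the constraint. Here I would invoke the chain-rule identity \eref{eq:discw2dif}: differentiating the elliptic relation with test function $\phi_l$ gives $\int_\Omega\phi_l\,\partial w_\hbar/\partial u_m=\int_\Omega\nabla\phi_m\cdot\nabla\phi_l$, whence $\int_\Omega\partial(w_\hbar^2)/\partial u_m=2\sum_l w_l\int_\Omega\nabla\phi_m\cdot\nabla\phi_l$. Multiplying by $\sigma\gamma\e^3/2$ yields the remaining $\gamma\e^3$ term of \eref{eq:mixdiscderivI}. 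Finally, the relation \eref{eq:mixdiscw} is simply the membership constraint $B_\hbar(u_\hbar,\phi_k)=\langle w_\hbar,\phi_k\rangle_\hbar$ written out in mass- and stiffness-matrix form, which must accompany the formula so that, once $(u_l)$ is prescribed, the coefficients $(w_l)$ entering \eref{eq:mixdiscderivI} are determined. Collecting the four contributions gives the claimed identity.
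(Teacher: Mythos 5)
Your proposal is correct and follows essentially the same route as the paper: the paper's (one-line) proof likewise inserts the nodal ansatz into the regularization integral, invokes Proposition \ref{thm:dermismatch} for the misfit term and the chain-rule identity \eref{eq:discw2dif} for the $w_\hbar^2$ term, and reads off \eref{eq:mixdiscw} from the membership constraint. Your version simply spells out the routine differentiations of the gradient and obstacle terms that the paper leaves implicit.
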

 \begin{proof}
  Insert the ansatz for $(u_\hbar,w_\hbar)$ into the regularization integral of \eref{eq:mixdiscIPFabs} and differentiate with respect to $u_m$. In view of Proposition \ref{thm:dermismatch} and \eref{eq:discw2dif} we obtain \eref{eq:mixdiscderivI} -- \eref{eq:mixdiscw}.
 \end{proof}

 \begin{rem}
Let $\bbb[u]= (u_l)_l$ and $\bbb[w]= (w_l)_l$, we can write the above in a matrix formulation
\begin{eqnarray}\label{eq:mixdiscderivImat}
 \frac{\p \mmm[I]^\gamma_{\e,h,\hbar}}{\p u_m}(u_\hbar,w_\hbar) =&\ - h^2 \frac{s_{\max} - s_{\min}}{2}\sum_{x_\alpha\in \Omega_h\setminus\{x_0\}}P_\alpha s(u_\hbar)(x_\alpha)\phi_m(x_\alpha)\nonumber \\
 &\ +\sigma \Big(\mathbb{S}( \gamma \e^3\bbb[w] +\e\bbb[u]) -\frac{1}{\e} \mathbb{M}\bbb[u]\Big),
\end{eqnarray} 
and,  
\begin{equation}\label{eq:mixdiscwmat}
\mathbb{M}\bbb[w] = \mathbb{S}\bbb[u]. \qquad \Big( \textrm{for }\mathbb{M}_{ij} = \int_{\Omega} \phi_i \phi_j \dd x, \quad \mathbb{S}_{ij} = \int_{\Omega} \nabla \phi_i \cdot \nabla \phi_j \dd x \Big).
\end{equation}
We call $\mathbb{M}$ the mass matrix  and $\mathbb{S}$ the stiffness matrix for the discretization.
\end{rem}

\subsection{An explicit descent scheme}\label{sec:desc}

We can write down a simple iterative scheme for updating the phase field variables from the derivative \eref{eq:mixdiscderivImat} -- \eref{eq:mixdiscwmat}. Denote coefficient vectors as $\bbb[a]= (a_l)_l$.  Set a numerical tolerance Tol $>0$, and set $\eta \in (0,1)$, and $\alpha^{{init}} \in \R_+$. Define initial coefficient vector $\bbb[u]^{(0)}$ so that $u^{(0)}_\hbar \in S_\hbar$ and set $\mathbb{M}\bbb[w]^{(0)} = \mathbb{S}\bbb[u]^{(0)}$, ensuring that $(\bbb[u]^{(0)}\cdot\bbb[\phi],\bbb[w]^{(0)}\cdot\bbb[\phi])\in \mmm[A]_{\Delta,\hbar}$.

\vspace{10pt}

\noindent For each $k=0,1,2,\dots$, do the following: 
\begin{enumerate}
\item Calculate the discrete functional derivative
\begin{eqnarray}\label{eq:mixdiscderivImatsch}\fl \indent
 \frac{\p \mmm[I]^\gamma_{\e,h}}{\p u_m}(u^{(k)}_\hbar,w^{(k)}_\hbar) =&\ - h^2\frac{s_{\max} - s_{\min}}{2} \sum_{x_\alpha\in \Omega\setminus\{x_0\}}P_\alpha s(u^{(k)}_\hbar)(x_\alpha)\phi_m(x_\alpha)\nonumber \\
 &\ +\sigma \Big(\mathbb{S}( \gamma \e^3\bbb[w]^{(k)} +\e\bbb[u]^{(k)}) -\frac{1}{\e} \mathbb{M}\bbb[u]^{(k)}\Big).
\end{eqnarray} 
 \item Find the largest step $\alpha \in \{\frac{\alpha^{{init}}}{2^{j-1}}\ | \ j\in\N\}$ so that, if we define 
\[ \cases{u_m^{(k+1)} &$= \Pi\Big(u_m^{(k)}-\alpha\frac{\p \mmm[I]^\gamma_{\e,h}}{\p u_m}(u^{(k)}_\hbar,w^{(k)}_\hbar)\Big), \qquad \forall m=1,\dots, L,$\\
                                \mathbb{M}\bbb[w]^{(k+1)}_m &$= \mathbb{S}\bbb[u]^{(k+1)}.$
                               }
\]
(where $\Pi:\R \to [-1,1]$ is a projection) then, the following inequality is satisfied:
\[
 \mmm[I]^\gamma_{\e,h}(u^{(k+1)}_\hbar,w^{(k+1)}_\hbar) - \mmm[I]^\gamma_{\e,h}(u^{(k)}_\hbar,w^{(k)}_\hbar) < -\frac{\eta}{\alpha^2}\|u^{n+1}_\hbar-u^{n}_\hbar\|^2.
\]
\item If $\alpha^{-2}\|u^{n+1}_\hbar-u^{n}_\hbar\|^2 < \textrm{Tol}$ we are done.
\item Otherwise go back to step 1. with $k \to k+1$
\end{enumerate}

\begin{rem}
 Our scheme uses tolerances based on differences of $\alpha^{-1}(u^{k+1}_\hbar - u^k_\hbar)$ which involve the projection $\Pi$ implicitly, and lead to better numerical properties than directly using gradients.
\end{rem}

\section{Numerical results}\label{sec:res}

We validate our model choice and scheme by presenting numerical simulations in two dimensions. We begin by investigating the choice of important model parameters. We then illustrate some different geometries of the true slowness function with different source -- receiver configurations designed to show the behaviour of recovery, as well as intuition into the reliability and limitations of solutions.

The solver for the forward problem was constructed in C++, and compiled into a MATLAB mex function. The inverse solver was then computed using MATLAB 2017b.

\subsection{Parameter Study}

\subsubsection{Model Parameters}
We demonstrate binary recovery of a simple test case to give intuition into sensible model parameter choices.  For this we shall use data and source receiver locations as described in \Fref{fig:circ}. We refer to the true field as `circular disk', defined on $\Omega=[0,1]\times[0,1]$, with $s_{\min}=2,\ s_{\max}=4$, and is given by 
\[
 s(x)=\cases{
	s_{\max}, &$ \  (x-\frac{1}{2})^2+(y-\frac{1}{2})^2 \leq \big(\frac{1}{4}\big)^2, $     \\
	s_{\min}, &  otherwise.
      }
\]
The source $x_0=(1/2,1/2)$ and data observed on all of $\p\Omega$. We choose the misfit functional to be a boundary integral of $\p\Omega$ as in \eref{eq:mismatchbdry}, and unless specified, in the study we do not have noisy observations. We take our prior space $\mmm[A]_{\Delta,\hbar}$ as in \eref{eq:mixdiscAdefabs}, where $\dD=\p\Omega$.  For the regularization we have $\e,\gamma,\sigma>0$. The recovery of \Fref{fig:circ} (right), had parameters $\gamma =10^{-2}$, $\e$ to produce an interface width of $1/20$, and $\sigma=10^{-3}$ and with $\nu=10^{-2}$ ($1\%$ noise on observations).

We discretize the inverse problem choosing $\hbar=1/160$, and take $\Gamma_h = \p\Omega_h$, for the discrete mismatch functional \eref{eq:mismatchbdrydisc}. We take $h=\hbar$, and avoid commiting an `inverse crime' by generating data from a forward problem solve on a fine mesh with $h_{\textrm{dat}} = h/8$. We solve the problem using the scheme of \Sref{sec:desc}, with tolerance Tol$=10^{-12}$ and $\eta=10^{-5}$, and $\alpha^{{init}}=10^4$.  We take the initial value of $u_\hbar^{0}\equiv -1$ everywhere for all studies.

\begin{figure}[ht]
        \centering
        \includegraphics[width=0.32\textwidth]{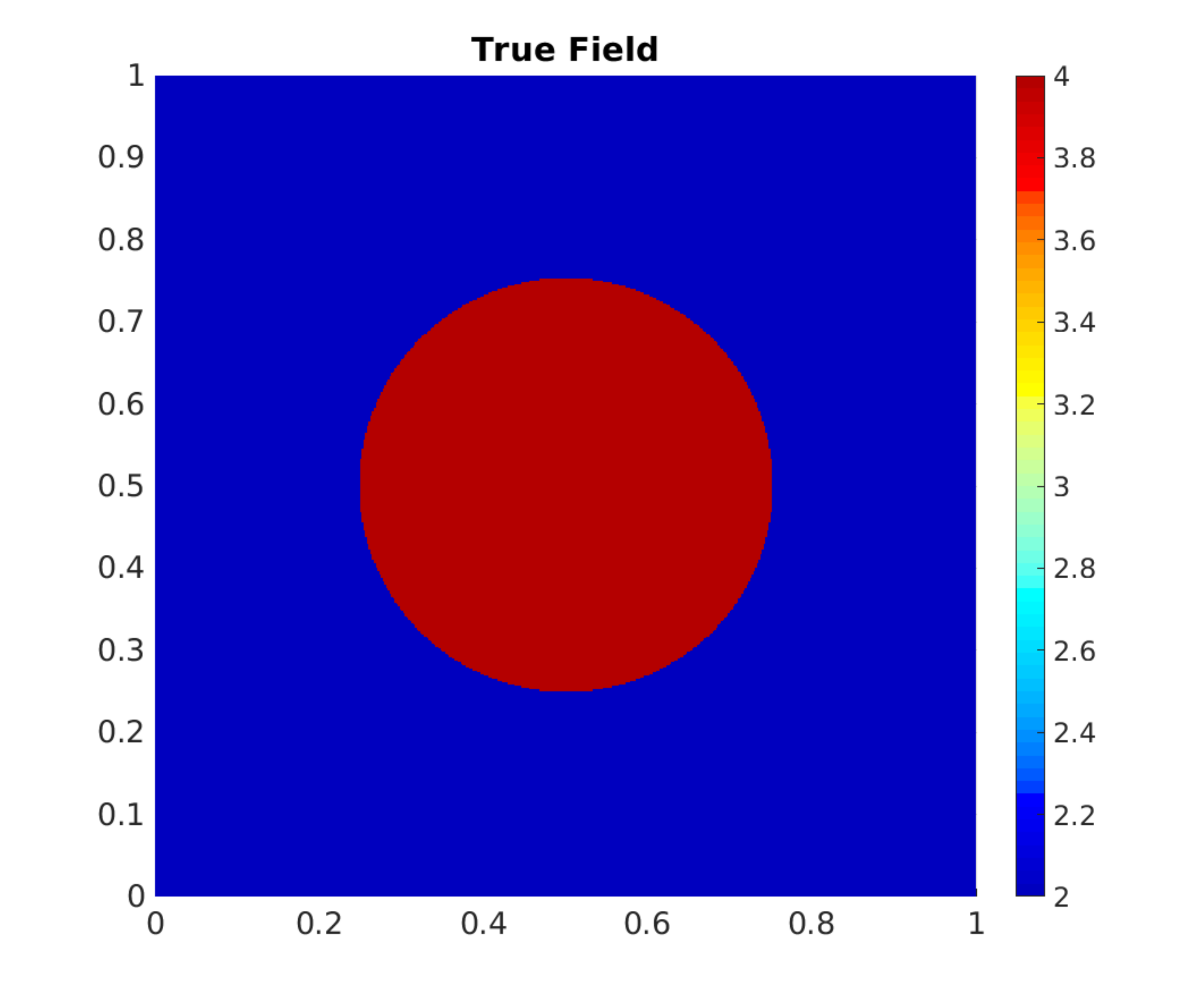}
        \includegraphics[width=0.32\textwidth]{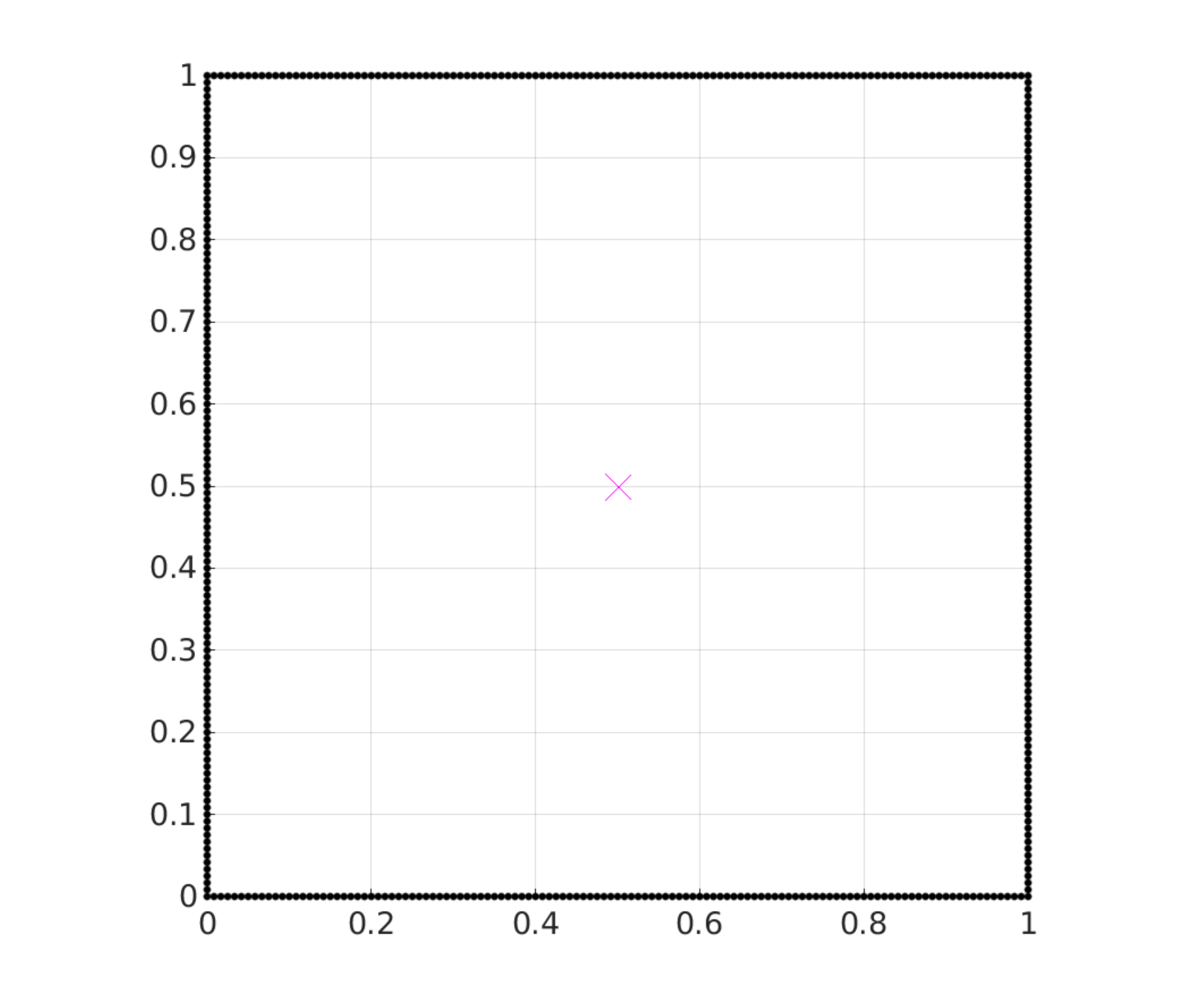}
        \includegraphics[width=0.32\textwidth]{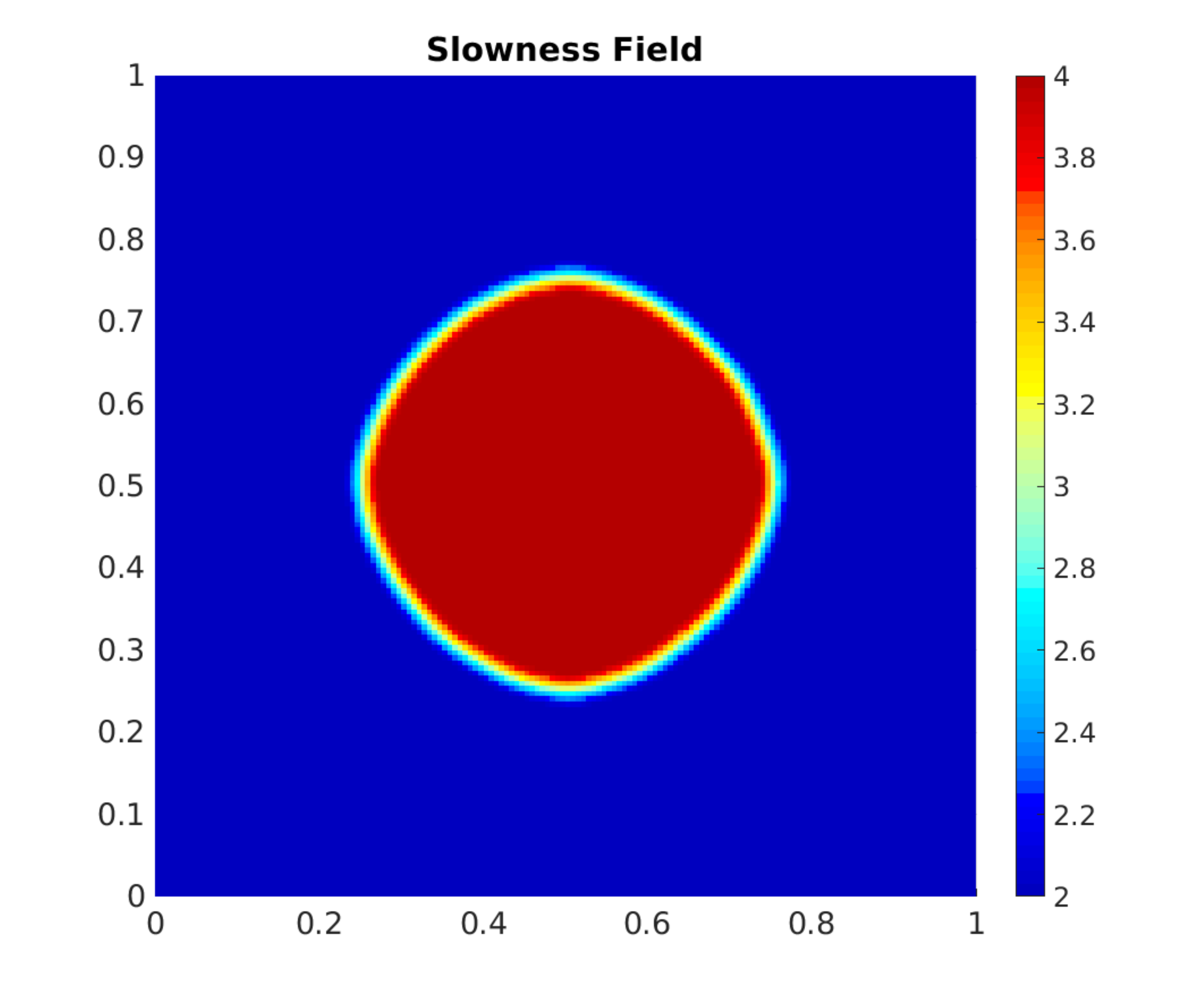}
          
          \caption{\label{fig:circ}(a) The true slowness function. (b) Position of source (cross) and observation points (dots). (c) Example recovery from data perturbed with $1\%$ noise at the observations, $\hbar=\frac{1}{160}$.}
\end{figure}

We first investigate the interfacial resolution. We do so by fixing $\gamma=10^{-4}$, and construct $\delta^\gamma$, by numerically solving \eref{eq:delta}. In view of the ansatz in Remark \ref{rem:gamma1}, we set $\e=K\hbar/(2\delta^\gamma)$, which produces an interface width of size $K\hbar$. The simulation run is displayed in \Tref{tab:eps}. We observe the $\mmm[I]^\gamma_\e$ decreases and appears to converge as interfacial width increases. We choose interfacial width at least $8\hbar$ for good accuracy. Results remained largely constant at different values of $\gamma$ - so the higher order contributions need little extra resolution.

\begin{table}[ht]
 
 \caption{\label{tab:eps}Table listing the interface widths from different $\e$ values, given a fixed value of $\gamma = 10^{-4}$. We display the size of both misfit and regularization terms, and the full functional $\mmm[I]_{\e_\hbar,h}^\gamma$.}
\begin{indented}
\item[]\begin{tabular}{@{}llll} 
\br
interface width & Misfit & $\sigma\mmm[J]^\gamma_\e(u_h,w_h)$ & $\mmm[I]_{\e_\hbar,h}^\gamma(u_\hbar,w_\hbar)$\cr
\mr
$4\hbar$ 	&	2.4442E-08&	1.6022E-04&	1.6024E-04\cr
$6\hbar$	&	2.3324E-08&	1.5754E-04&	1.5756E-04\cr
$8\hbar$	&	2.2235E-08&	1.5656E-04&	1.5658E-04\cr
$10\hbar$	&	2.2193E-08&	1.5609E-04&	1.5611E-04\cr
$12\hbar$	&	2.1949E-08&	1.5584E-04&	1.5586E-04\cr
$14\hbar$	&	2.1876E-08&	1.5568E-04&	1.5570E-04\cr
\br
\end{tabular}
\end{indented}
\end{table}

We now assess $\gamma$. For each $\gamma$, we choose $\e$ to ensure constant interface width of $8\hbar$. The results are listed in \Tref{tab:gamma}. We see that one obtains consistent values of the functional below a value $10^{-2}$, and there is significant difference for greater values. This indicates for consistency we should choose $\gamma\leq 10^{-2}$.

\begin{table}[ht]

 \caption{\label{tab:gamma}Table listing varying values of $\gamma$, we choose $\e$ appropriately in each simulation to yield interface width $8\hbar$. We display the size of both misfit and regularization terms, and the full functional $\mmm[I]_{\e_\hbar,h}^\gamma$.}
\begin{indented}
\item[]\begin{tabular}{@{}llll} 
\br
$\gamma$ & Misfit & $\sigma\mmm[J]^\gamma_\e(u_h,w_h)$ & $\mmm[I]_{\e_\hbar,h}^\gamma(u_h,w_h)$\cr
\mr
1.0000E+00&	2.3322E-08&	1.5839E-04&	1.5841E-04\cr
1.0000E-01&	2.2446E-08&	1.5726E-04&	1.5728E-04\cr
1.0000E-02&	2.1970E-08&	1.5673E-04&	1.5675E-04\cr
1.0000E-03&	2.2100E-08&	1.5659E-04&	1.5661E-04\cr
1.000E-04&      2.2235E-08&	1.5656E-04&	1.5658E-04\cr
\br
\end{tabular}
\end{indented}
\end{table}

We next investigate $\sigma$. We take $\gamma= 10^{-2}$, and interface width $8\hbar$. We validate the choice of $\sigma$ by using the approximation property of the regularization to the perimeter length of the interface. Theorem \ref{thm:gammaconv}, shows $\mmm[J]^\gamma_\e \stackrel[]{\Gamma}{\to } \mmm[J]^\gamma_0$ and this is proportional to the interfacial length by a factor $P^\gamma$ (see \eref{eq:J0}). The truth in \Fref{fig:circ}, has interfacial length $\pi/2$.  \Tref{tab:signonoise} lists the results, along with a difference of the approximate and true interface length $|1/P^\gamma \mmm[J]^\gamma_\e -\pi/2|$. We see the best choice of $\sigma$ is around $10^{-3}$. Moreover, accuracy is still good if $\sigma$ is taken too small, but quickly poor if $\sigma$ is taken too large.  

\begin{table}[ht]
 \centering
 \caption{\label{tab:signonoise}Table listing varying values of $\sigma$ - the regularization parameter, we fix $\gamma=10^{-2}$, and choose $\e$ to yield interface width $8\hbar$. We display the size of both misfit and regularization terms, and the difference of predicted and true interfacial length: $\pi/2$.}
\begin{indented}
\item[]\begin{tabular}{@{}llll} 
\br
$\sigma$ & Misfit & $\sigma\mmm[J]^\gamma_\e(u_h,w_h)$ & $|\frac{1}{P^\gamma}\mmm[J]^\gamma_\e-\frac{\pi}{2}|$\cr
\br
1.0000E-04&	2.2112E-08&	1.5748E-04&	4.0037E-03\cr
2.0000E-04&	6.5955E-08&	3.1489E-04&	3.6537E-03\cr
4.0000E-04&	2.3649E-07&	6.2956E-04&	3.1037E-03\cr
8.0000E-04&	9.0711E-07&	1.2582E-03&	1.9537E-03\cr
1.6000E-03&	3.8804E-06&	2.4975E-03&	9.8588E-03\cr
\br
\end{tabular}
\end{indented}
\end{table}
Finally we  verify the scaling of $\sigma$ when observations are subject to noise. This uncertainty is incorporated into the discrete misfit functional as in \Sref{sec:noise}. The results are displayed in \Tref{tab:signoise} for three levels of uncertainty with standard deviation $\nu=1/200,\ 1/100$, and $1/50$ corresponding to $0.5\%,\ 1\%,\ 2\%$ noise. We observe that for larger noise levels the accuracy decreases, as expected. We also observe that choosing the scaling of $\sigma$ with $1/\nu^2$ is sensible. If one takes $\bar\sigma$ too large ($4\times 10^{-3}$) we once again encounter a large loss of accuracy. 
\begin{table}[ht]

 \caption{\label{tab:signoise}Table listing results produced with detector noise defined by the standard deviation $\nu$ of a centred normal random variable, and regularization parameter scaled by $\frac{1}{\nu^2}$. We fix $\gamma=10^{-2}$, and choose $\e$ to yield interface width $8\hbar$. We display the size of both misfit and regularization terms, and the difference of predicted and true interfacial length: $\pi/2$.}
\begin{indented}
\item[]\begin{tabular}{@{}lllll} 
\br
 $\nu$, if $\Sigma\sim N(0, \nu^2)$ & $\bar \sigma$, ($\sigma \coloneqq \frac{\bar \sigma}{\nu^2})$  & Misfit & $\frac{\sigma}{\nu^2}\mmm[J]^\gamma_\e(u_h,w_h)$  & $|\frac{1}{P^\gamma}\mmm[J]^\gamma_\e-\frac{\pi}{2}|$\cr
 \mr
5.0000E-03&	1.0000E-03&	4.5291E-01&	6.2873E+01&	1.0387E-03\cr
1.0000E-02&	1.0000E-03&	3.7864E-01&	1.5734E+01&	2.5737E-03\cr
2.0000E-02&	1.0000E-03&	3.3873E-01&	3.9373E+00&	4.1237E-03\cr
\mr
5.0000E-03&	2.0000E-03&	5.8286E-01&	1.2561E+02&	6.8133E-04\cr
1.0000E-02&	2.0000E-03&	3.9781E-01&	3.1382E+01&	1.7163E-03\cr
2.0000E-02&	2.0000E-03&	3.4077E-01&	7.8455E+00&	1.6963E-03\cr
\mr
5.0000E-03&	4.0000E-03&	1.2109E+00&	2.5020E+02&	7.0463E-03\cr
1.0000E-02&	4.0000E-03&	5.8052E-01&	6.2531E+01&	7.5288E-03\cr
2.0000E-02&	4.0000E-03&	4.1042E-01&	1.5649E+01&	5.8963E-03\cr
\br
\end{tabular}
\end{indented}
\end{table}

We note that one must also balance the regularization and data misfit at different data contrasts, one can do this by rescaling $\sigma$ by $2/(s_{\max}-s_{\min})$.

\subsubsection{Discretization parameters}
We omit the tests of discretization parameters, but observed convergence in the case of fixed $\e$, and sending $\hbar=C h\to 0$ for $1\leq C \in \N$. The test scenario identical to the problem we created for the model parameters.

\subsection{Different geometries}

We have several different geometries that we wish to recover, which we shall use as ``truths'' for our inverse problem. We use the discretization parameters as in the previous section, and generate data on a mesh with $h_{\textrm{dat}}=h/8$. The choices of model parameter configuration are influenced by what we discovered in the parameter study. We take $\nu=10^{-2}$ for the noise, we also choose (unless otherwise stated) $s_{\min}=1$, $s_{\max}=1.1$ $\sigma=10^{-4}$, $\gamma=10^{-2}$, and $\e$ so that we have the interface width $8\hbar$. We use two source -- receiver configurations as demonstrated in \Fref{fig:srcdata}. 
\begin{figure}[ht]
   \centering
        \includegraphics[width=0.49\textwidth]{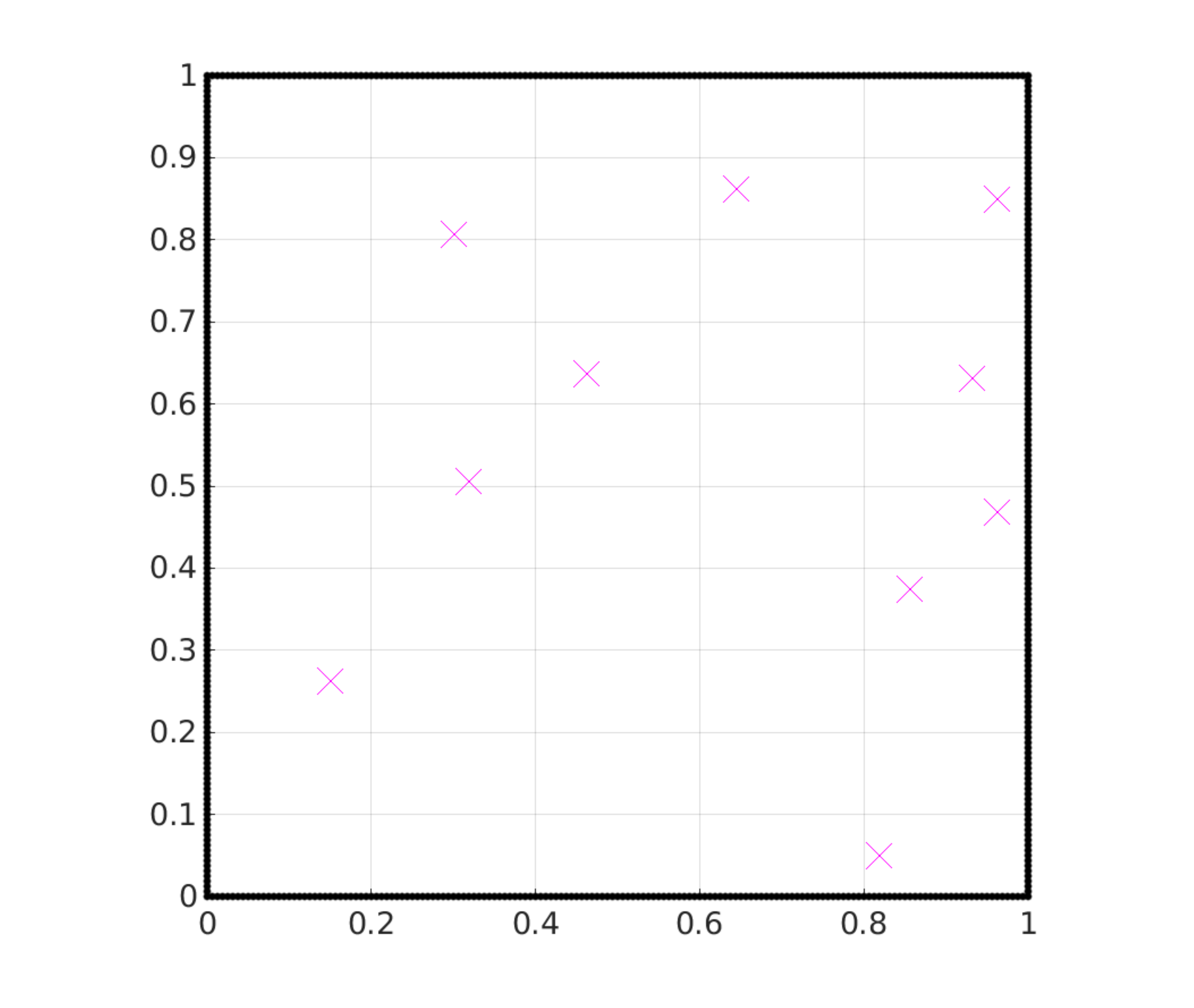}
        \includegraphics[width=0.49\textwidth]{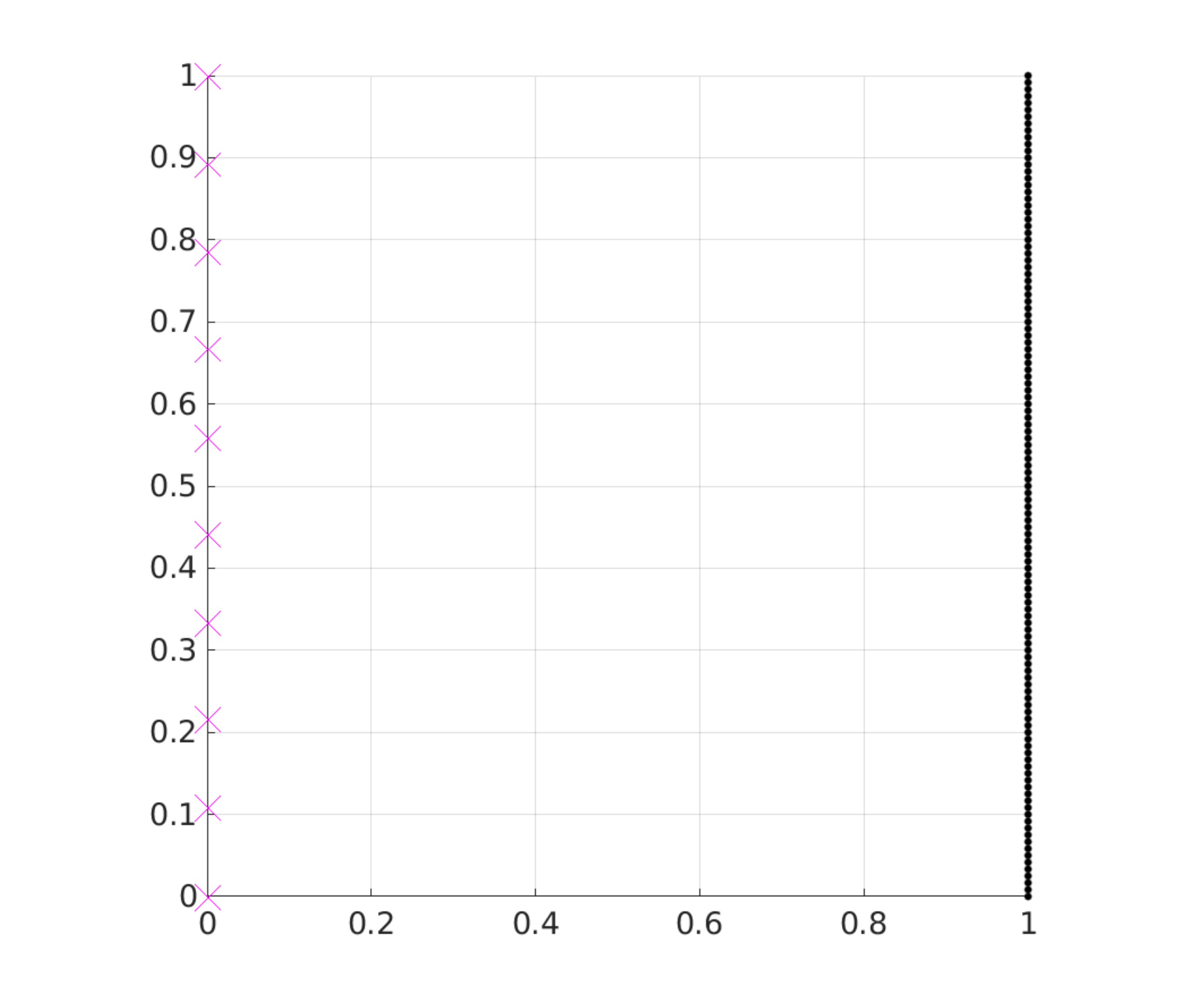}	
       
        \caption{ \label{fig:srcdata}(a) \underline{Random}:  dense boundary observations on $\p\Omega$; 10 source locations in $\Omega$ produced from MATLAB R2017b random number generator (seed=12131415, generator=twister). (b) \underline{Wells}: dense boundary observations on right hand wall of $\p\Omega$; 10 sources equidistributed on left hand wall of $\p\Omega$. }
\end{figure}

We define four more slowness function ``truths'': 
\begin{enumerate}
 \item Banded layers. We take this shape from \cite{LeuLi13}. The banded regions are sections of two annuli. That is, $\forall (x,y)\in [0,1]$
 \[\fl \indent
  \textrm{ if } \cases{
               \frac{1}{2}\Big(3.7-\sqrt{2.6^2-(2x-1)^2}\Big) \leq y \leq \frac{1}{2}\Big(4.1-\sqrt{2.6^2-(2x-1)^2}\Big),  \textrm{ or } \\
               \frac{1}{2}\Big(2.8-\sqrt{2.6^2-(2x-1)^2}\Big) \leq y \leq \frac{1}{2}\Big(3.2-\sqrt{2.6^2-(2x-1)^2}\Big),
              }
 \]
then $s(x,y)=s_{\max}$, otherwise $s(x,y)=s_{\min}$. The results are found in \Fref{fig:band}.
 
 \item Right angle. This example shape is similar to \cite{DecEllSty16}. $\forall (x,y)\in [0,1]$ 
 \[\fl \indent
 \textrm{ if } \cases{
              y\geq\frac{2}{3}x+0.4, \textrm{ or }\\
              y\geq -\frac{3}{2}x + 0.9, 
             }
 \]
then $s(x,y)=s_{\max}$, otherwise $s(x,y)=s_{\min}$. The results are found in \Fref{fig:jagged}.
 \item Arbitrary shape. $\forall (x,y)\in [0,1]$ 
 \[\fl \indent
   \textrm{ if } \cases{
                (x-\frac{2}{3})^2 + (y-\frac{1}{2})^2 \leq \frac{1}{5^2},&  \textrm{ or }\\
                (x-\frac{7}{15})^2 + (y-\frac{7}{10})^2 \leq \frac{1}{6^2},&  \textrm{ or }\\
                (x-\frac{7}{15})^2 + (y-\frac{3}{10})^2 \leq \frac{1}{8^2},&  
               }
 \]
then $s(x,y)=s_{\max}$, otherwise $s(x,y)=s_{\min}$. The results are found in \Fref{fig:splodge}.
 \item Shielded disk. $\forall (x,y)\in [0,1]$ 
 \[\fl \indent
  \textrm{ if }  (x-\frac{2}{3})^2 + (y-\frac{1}{2})^2 \leq \frac{1}{6^2},
  \textrm{ or if both }
  \cases{
  (x-\frac{2}{3})^2 + (y-\frac{1}{2})^2 \leq \Big(\frac{3}{8}\Big)^2,&  \\
               (x-\frac{4}{9})^2 + (y-\frac{1}{2})^2 \geq \frac{1}{4^2},&
  }
\]
then we set $s(x,y)=s_{\max}$, otherwise $s(x,y)=s_{\min}$. Here we choose values $s_{\min} = 1, \ s_{\max}$ to aid recovery in the wells configuration. The results are found in \Fref{fig:circban}. 
\end{enumerate}

\begin{figure}[ht]
        \centering   
        \includegraphics[width=0.32\textwidth]{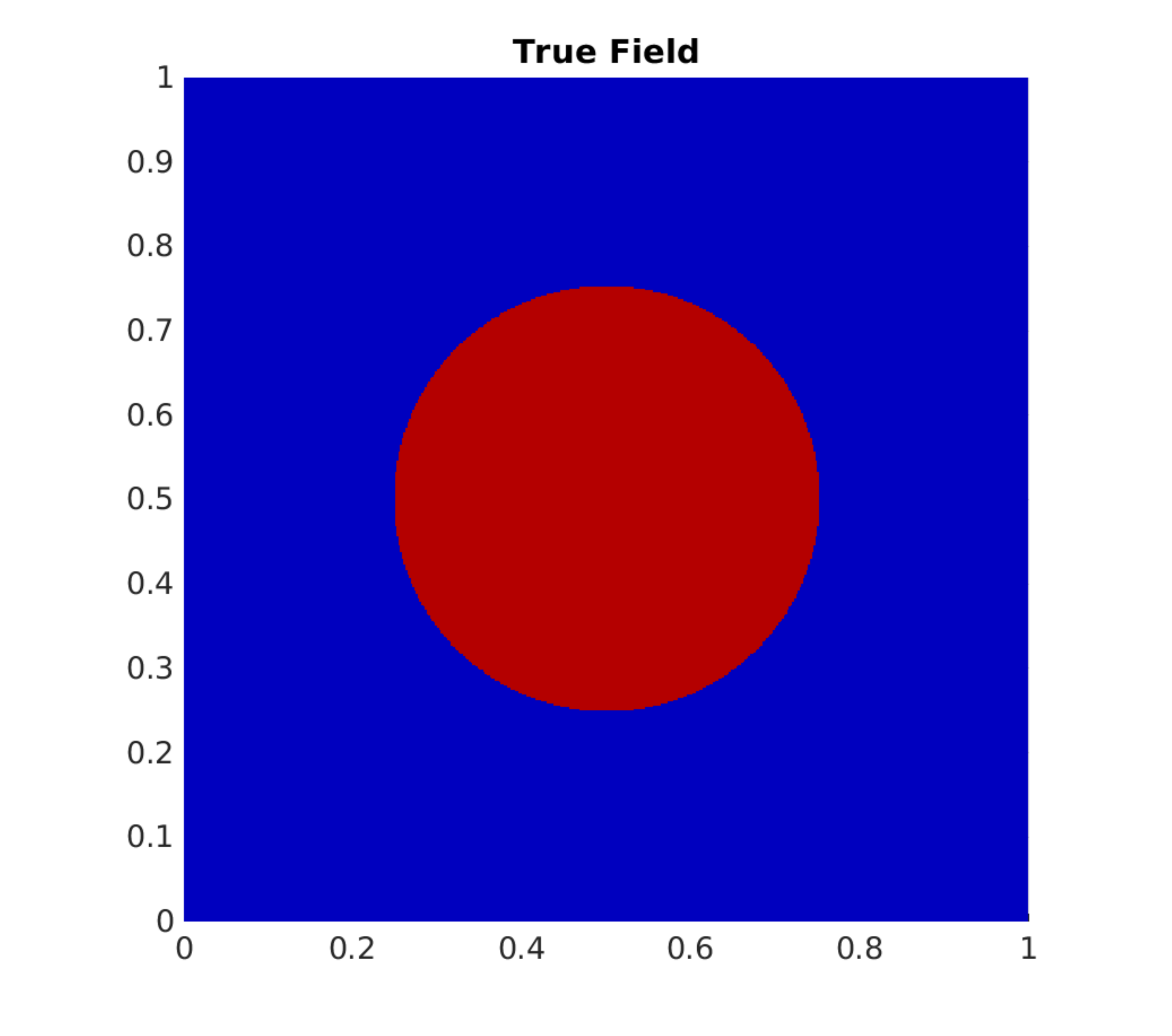}
        \includegraphics[width=0.32\textwidth]{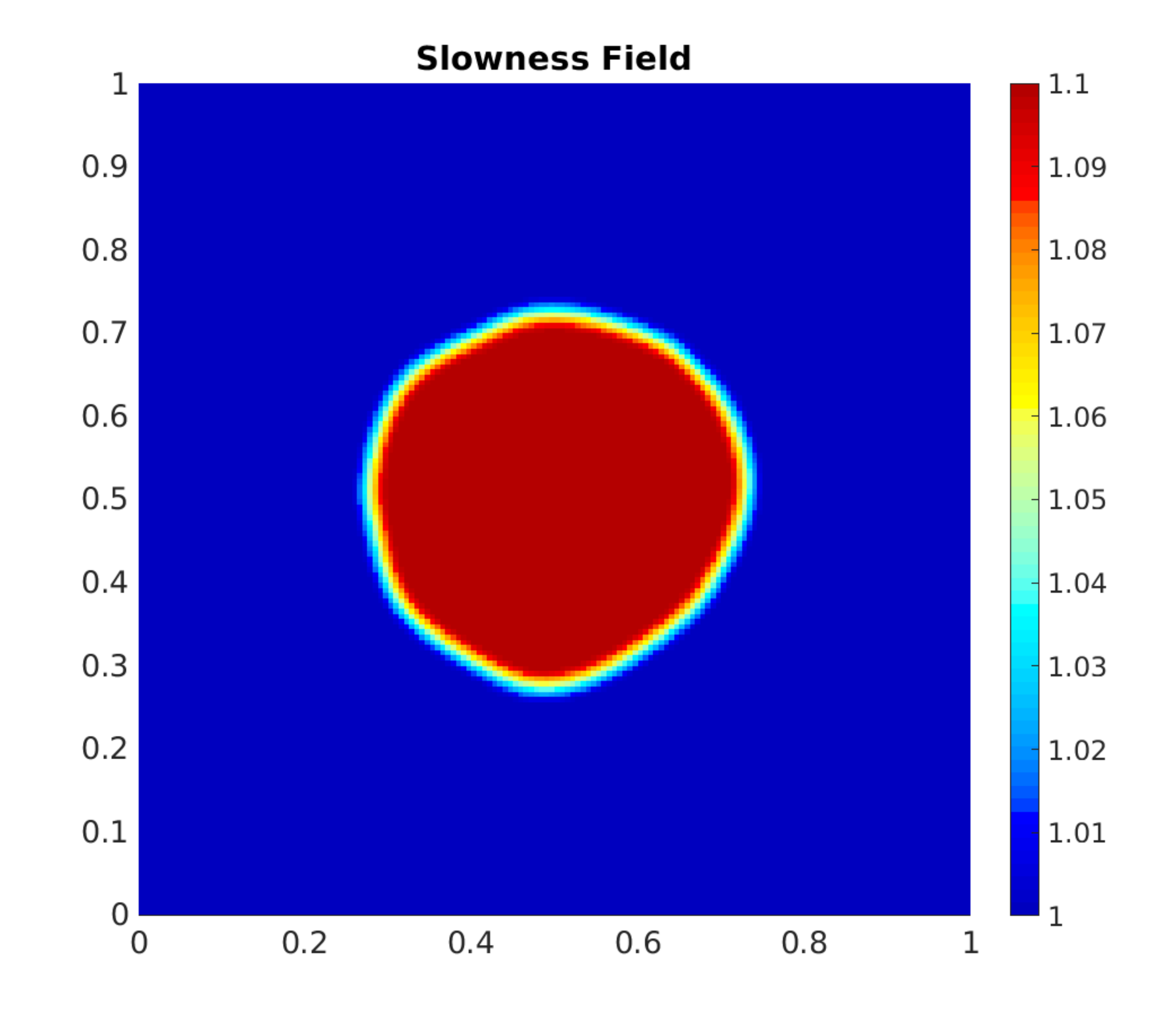}
         \includegraphics[width=0.32\textwidth]{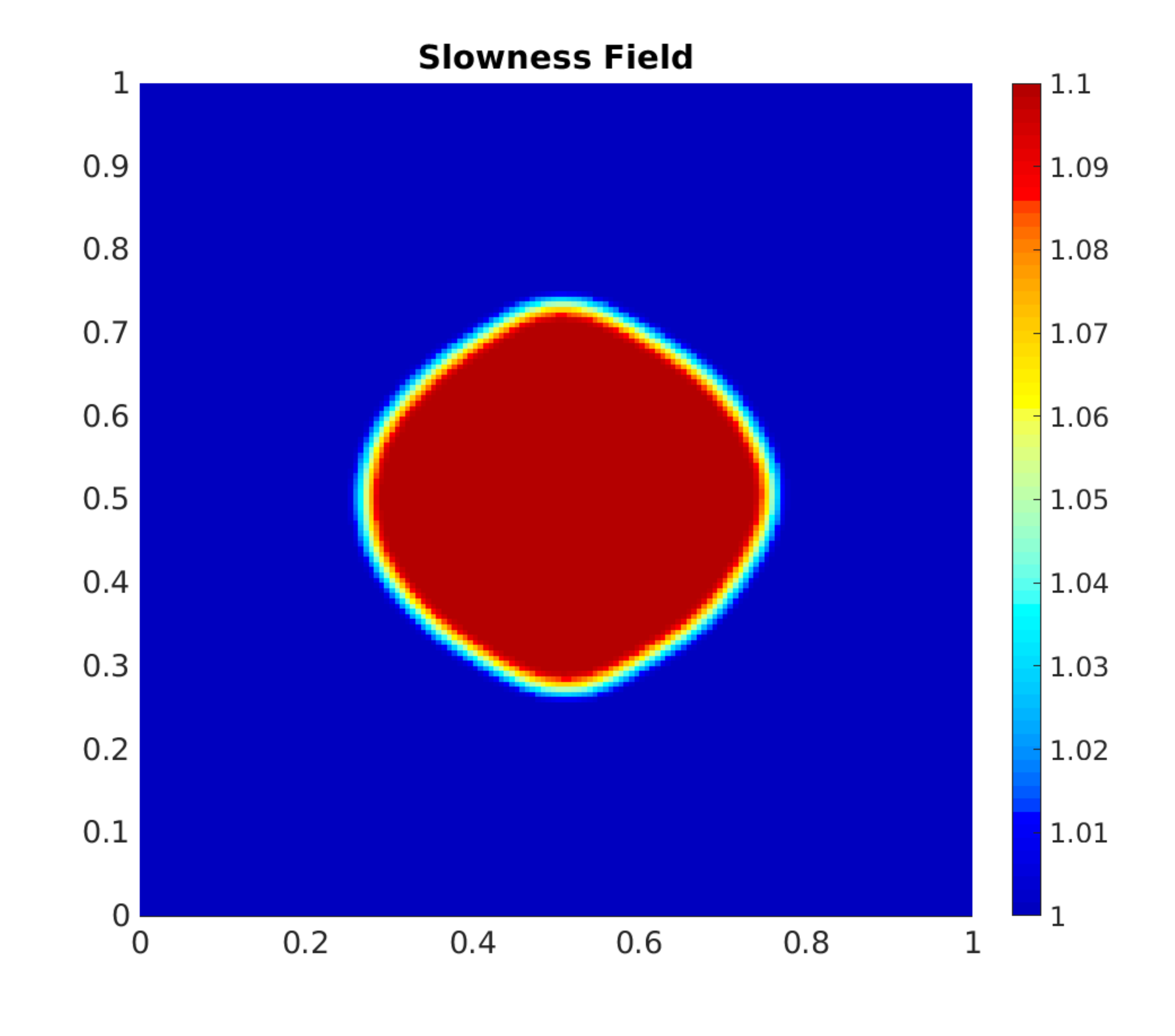}
        
        \caption{\label{fig:circ2}Circular disk recovery. (a) true slowness field, $s_{\min}=1,\ s_{\max}=1.1$. (b) slowness field from random configuration. (c) slowness field from wells configuration}
\end{figure}

\begin{figure}[ht]
        \centering   
        \includegraphics[width=0.32\textwidth]{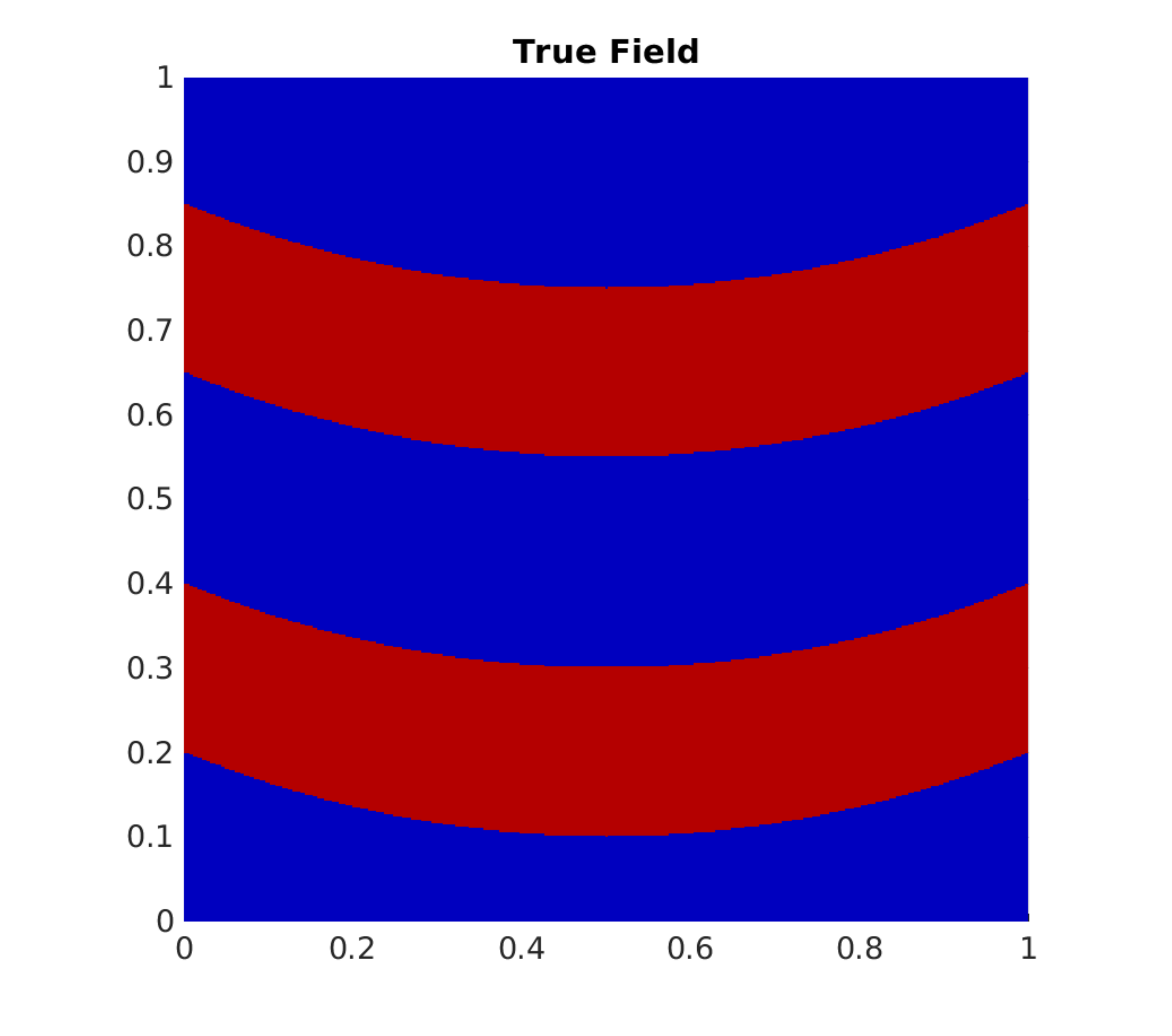}
        \includegraphics[width=0.32\textwidth]{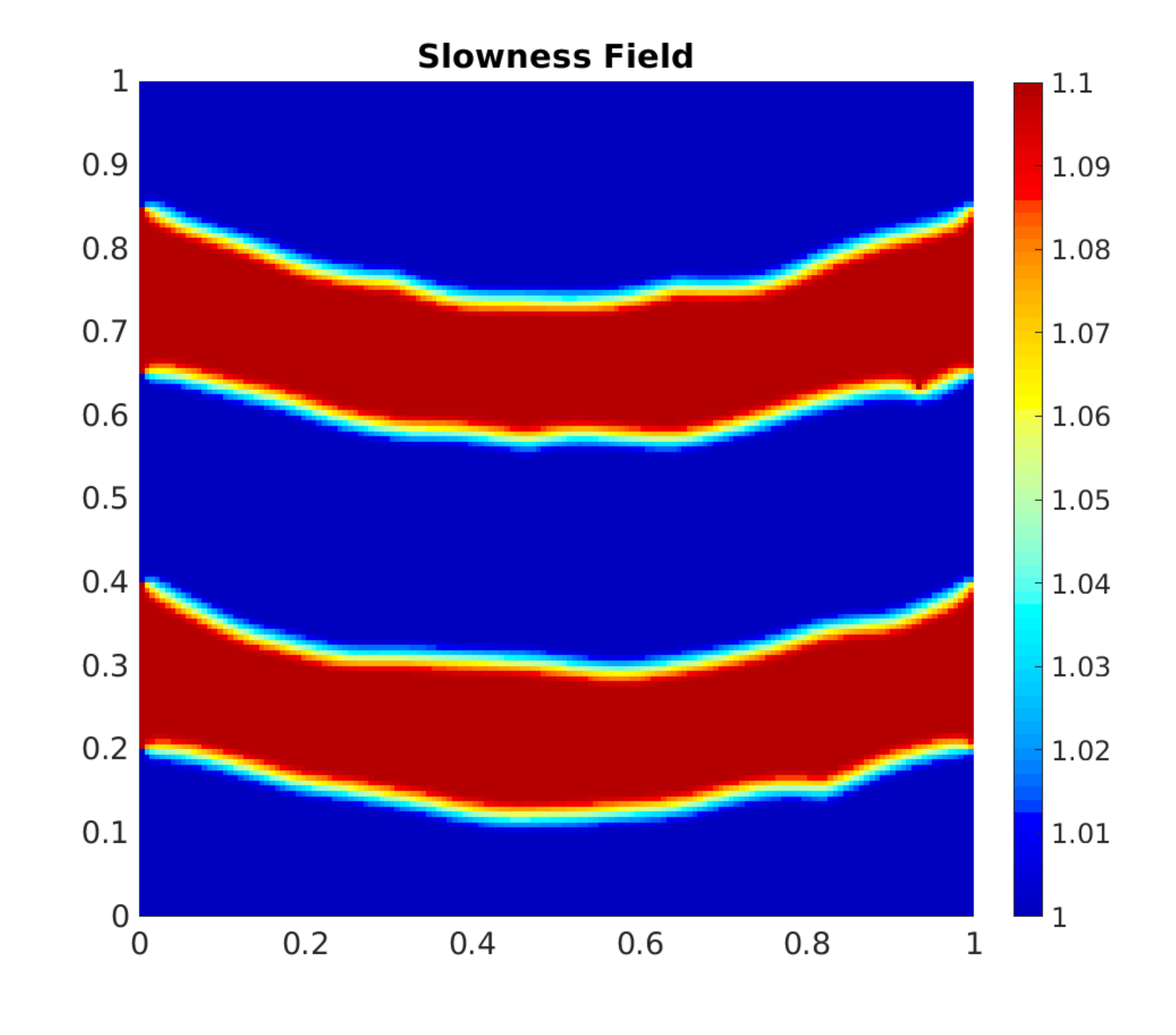}
         \includegraphics[width=0.32\textwidth]{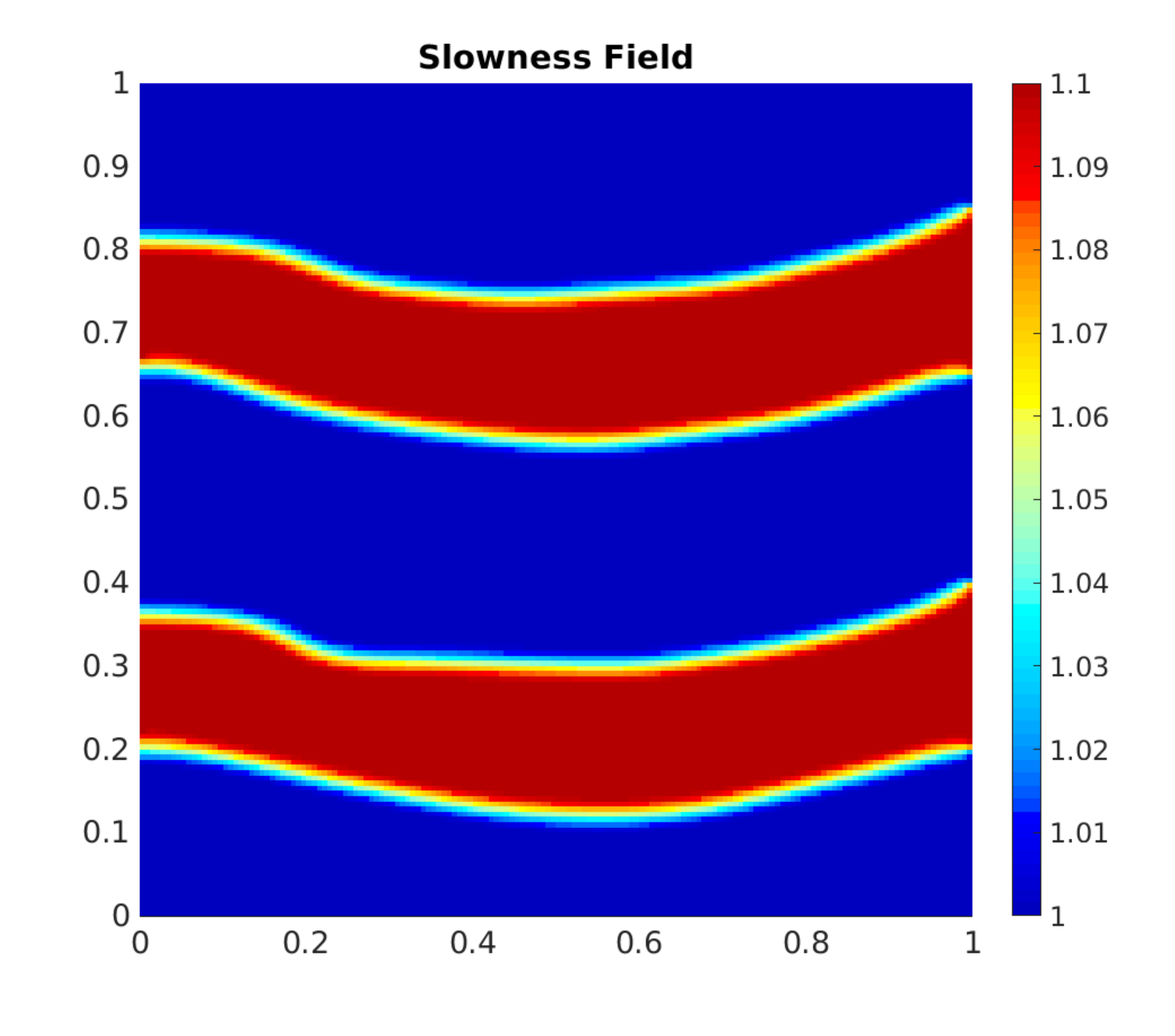}
        
        \caption{\label{fig:band}Right angle recovery. (a) true slowness field, $s_{\min}=1,\ s_{\max}=1.1$. (b) slowness field from random configuration. (c) slowness field from wells configuration}
\end{figure}

\begin{figure}[ht]
        \centering
        \includegraphics[width=0.32\textwidth]{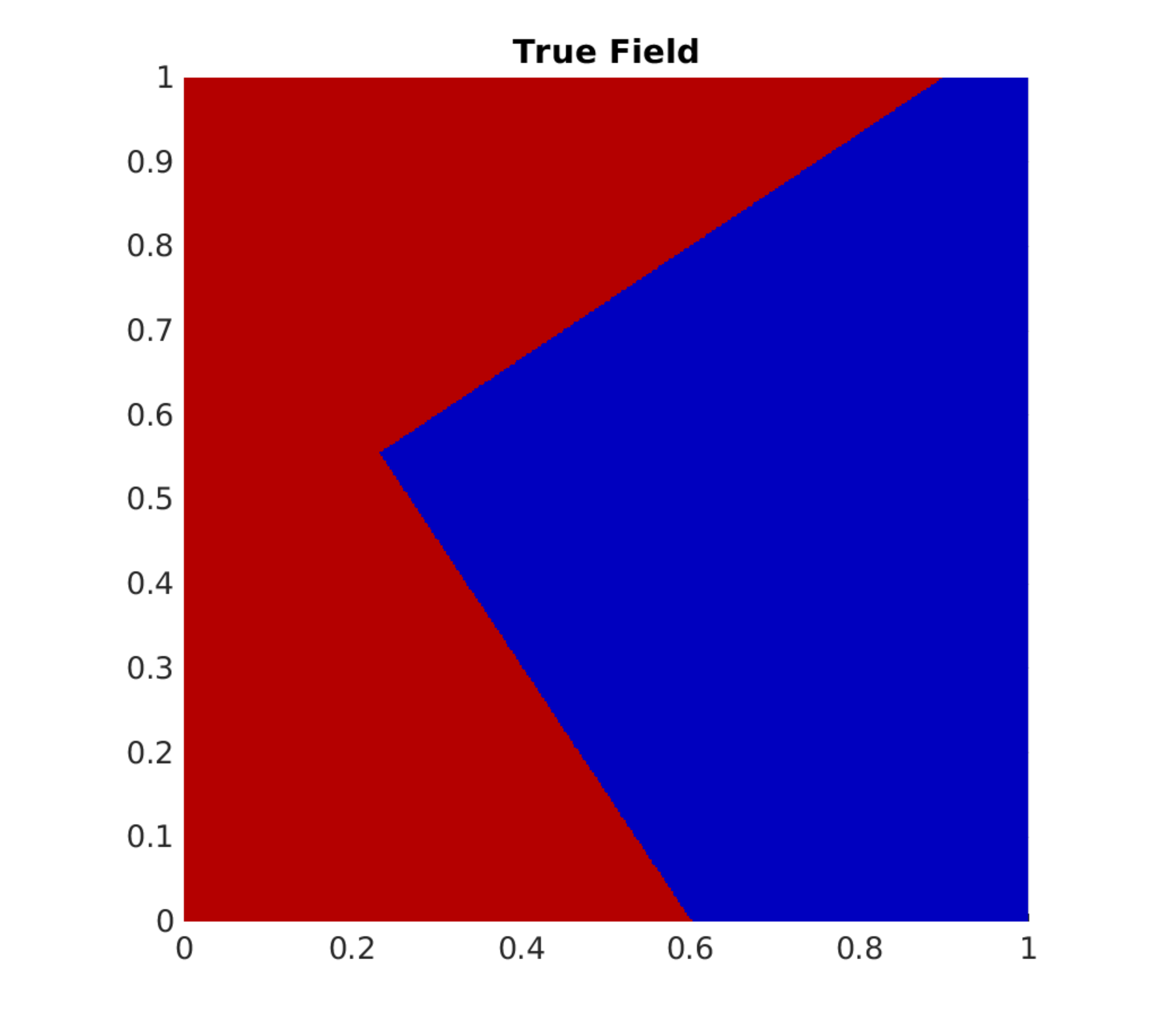}
        \includegraphics[width=0.32\textwidth]{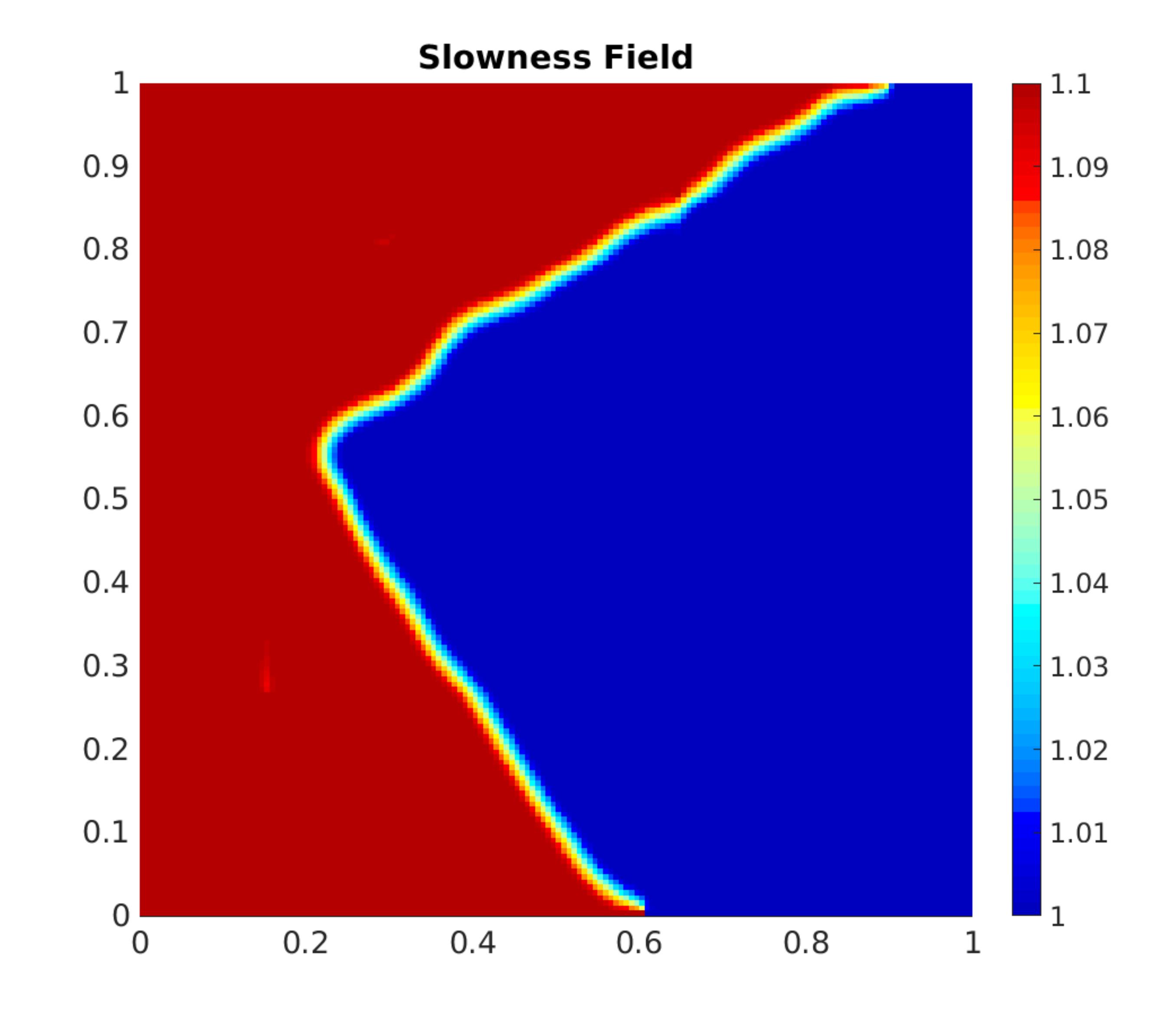}
        \includegraphics[width=0.32\textwidth]{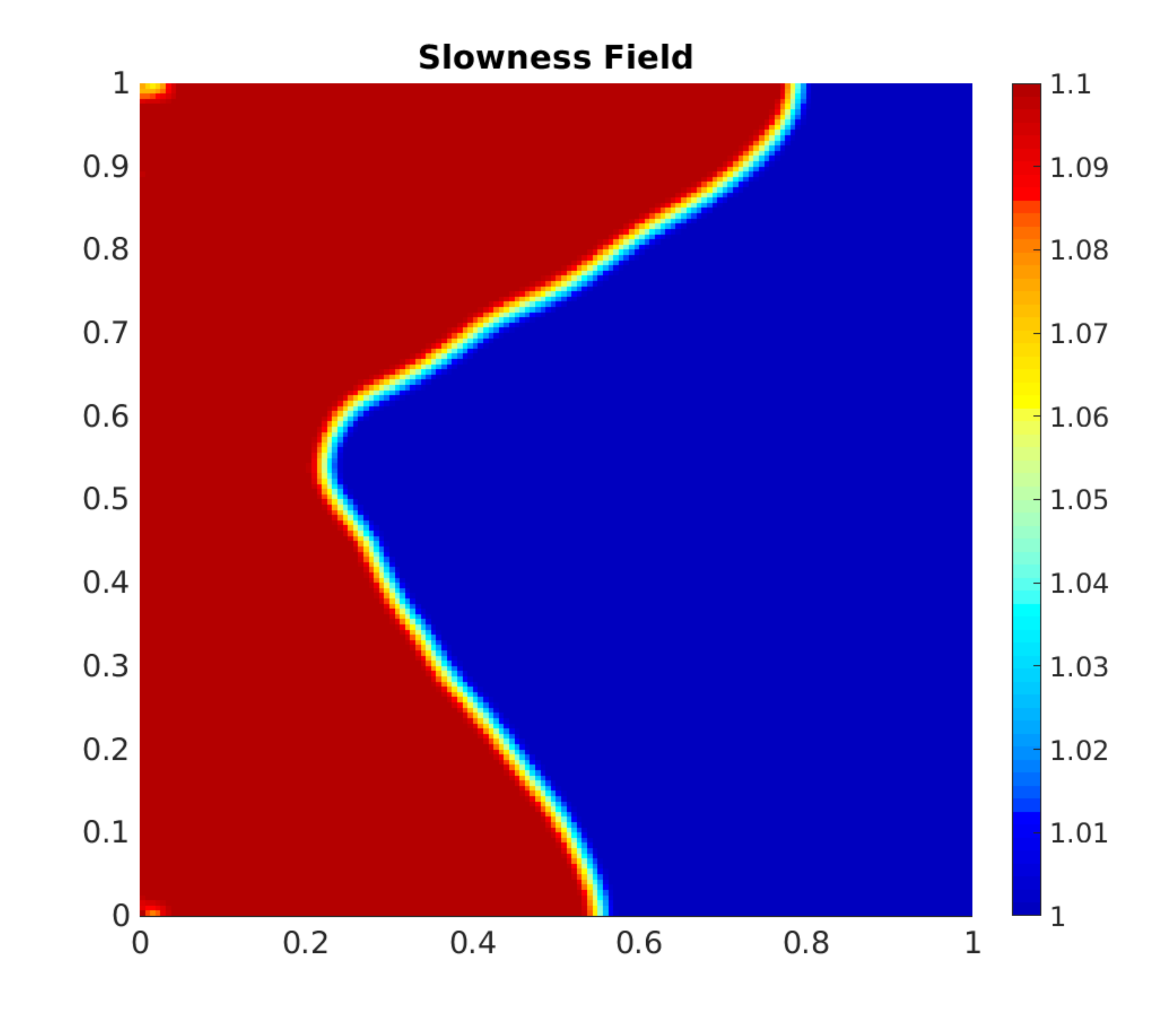}
        
        \caption{\label{fig:jagged}Angular boundary recovery. (a) true slowness field $s_{\min}=1, \ s_{\max}=1.1$. (b) slowness field from random configuration. (c) slowness field from wells configuration}
\end{figure}
        
\begin{figure}[ht]
        \centering
        \includegraphics[width=0.32\textwidth]{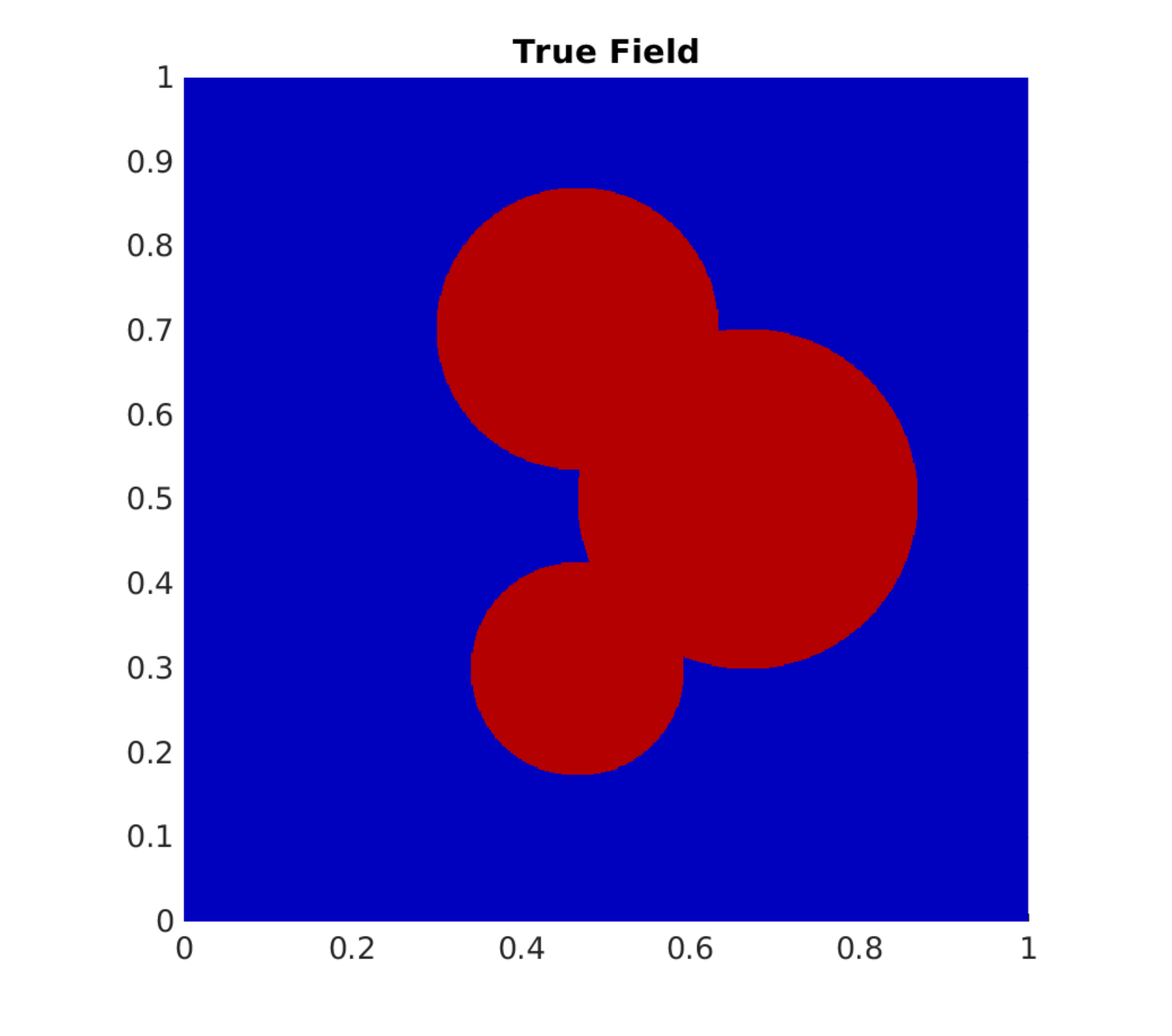}
        \includegraphics[width=0.32\textwidth]{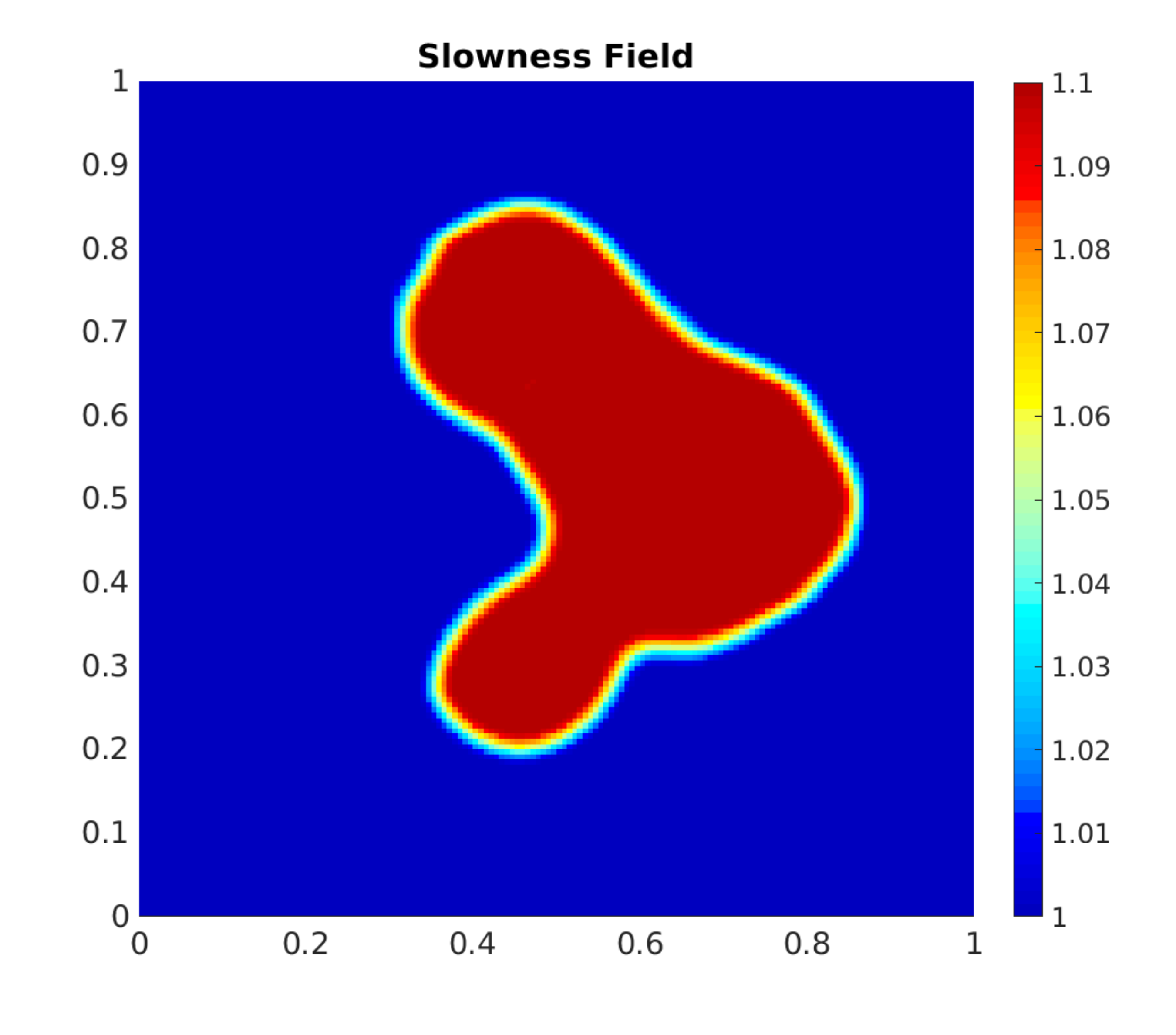}
        \includegraphics[width=0.32\textwidth]{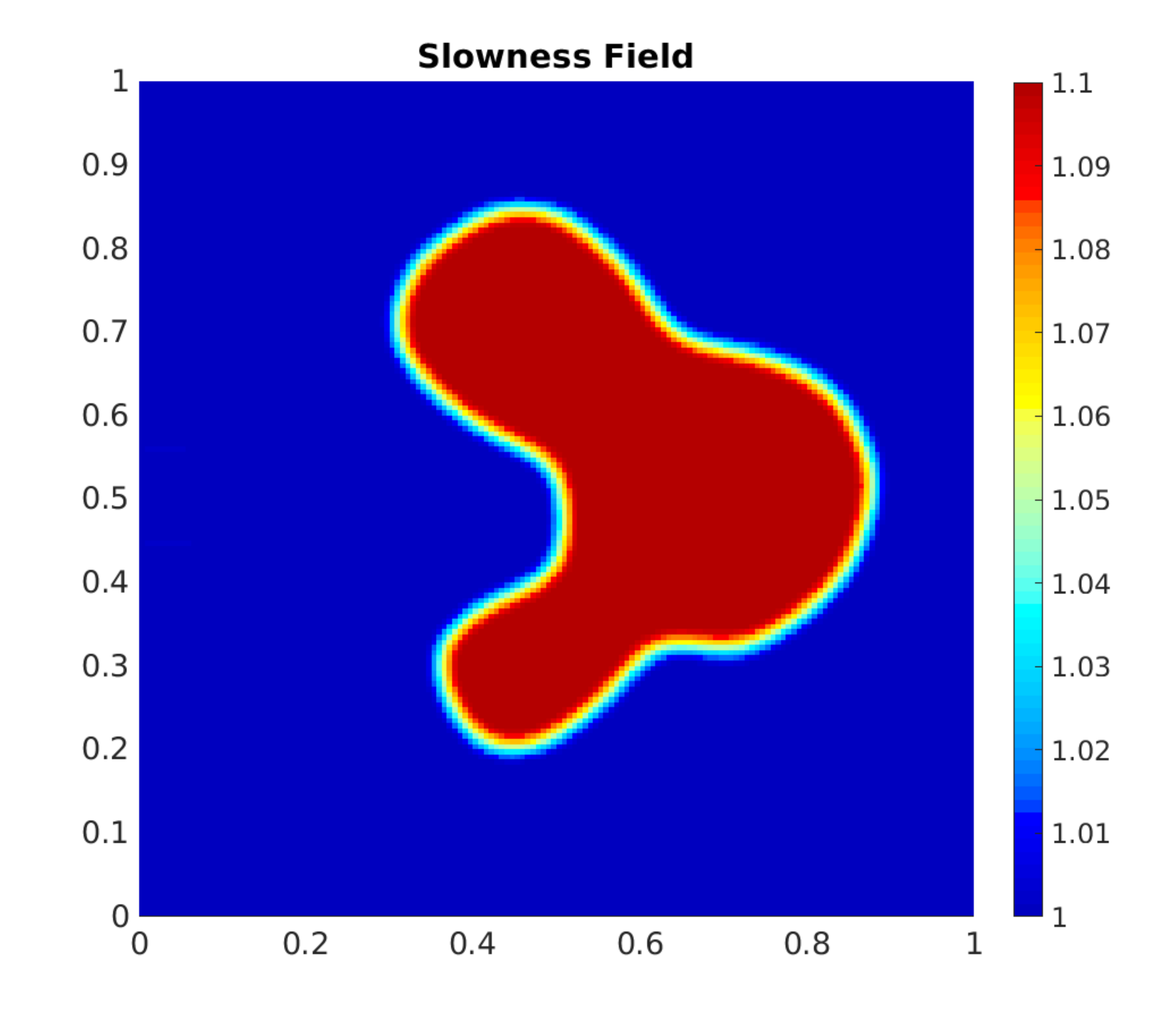}
        
        \caption{\label{fig:splodge}Arbitrary shape recovery. (a) true slowness field  $s_{\min}=1,\ s_{\max}=1.1$. (b) slowness field from random configuration. (c) slowness field from wells configuration }
\end{figure}
        
\begin{figure}[ht]
        \centering   
        \includegraphics[width=0.32\textwidth]{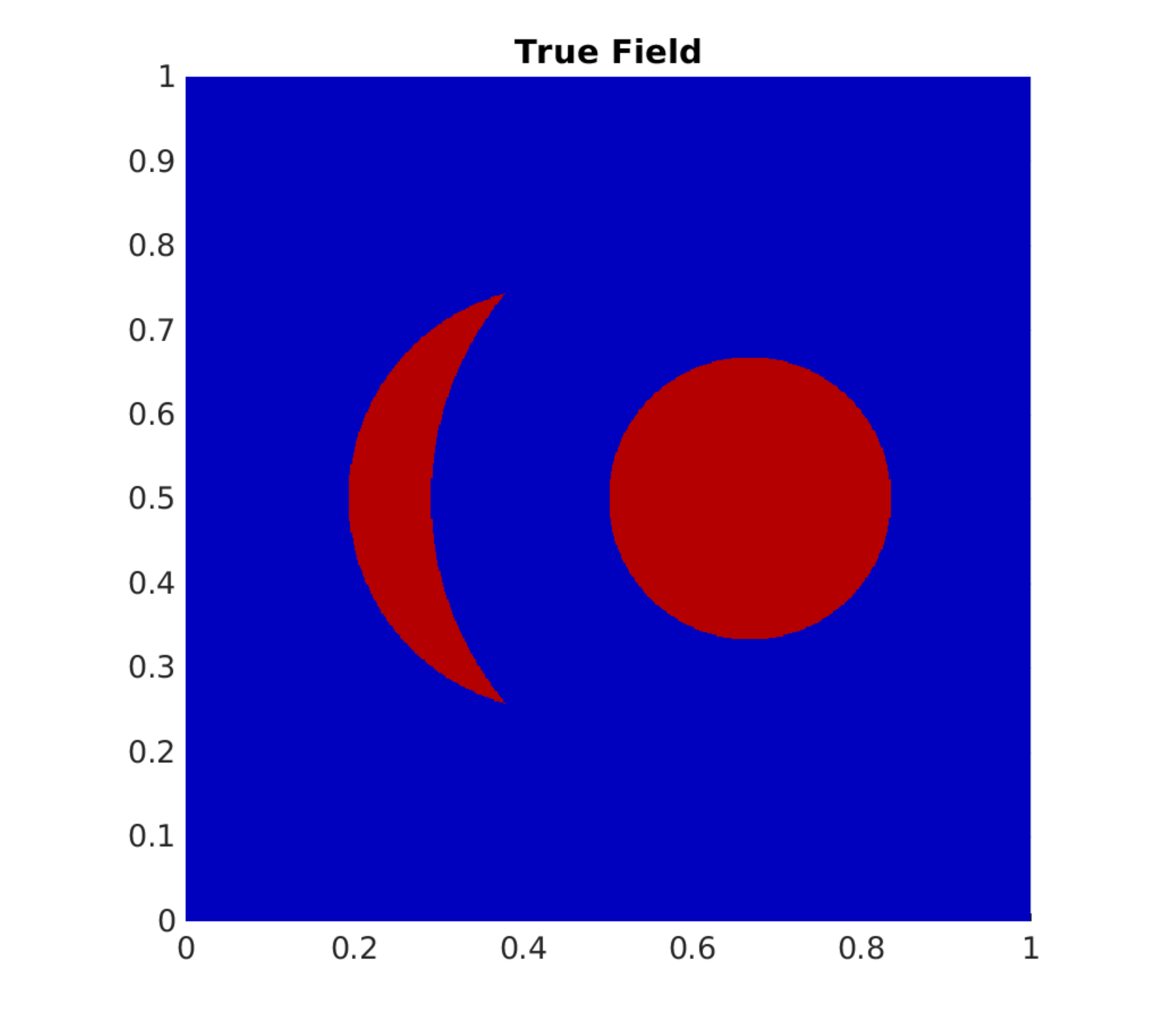}
        \includegraphics[width=0.32\textwidth]{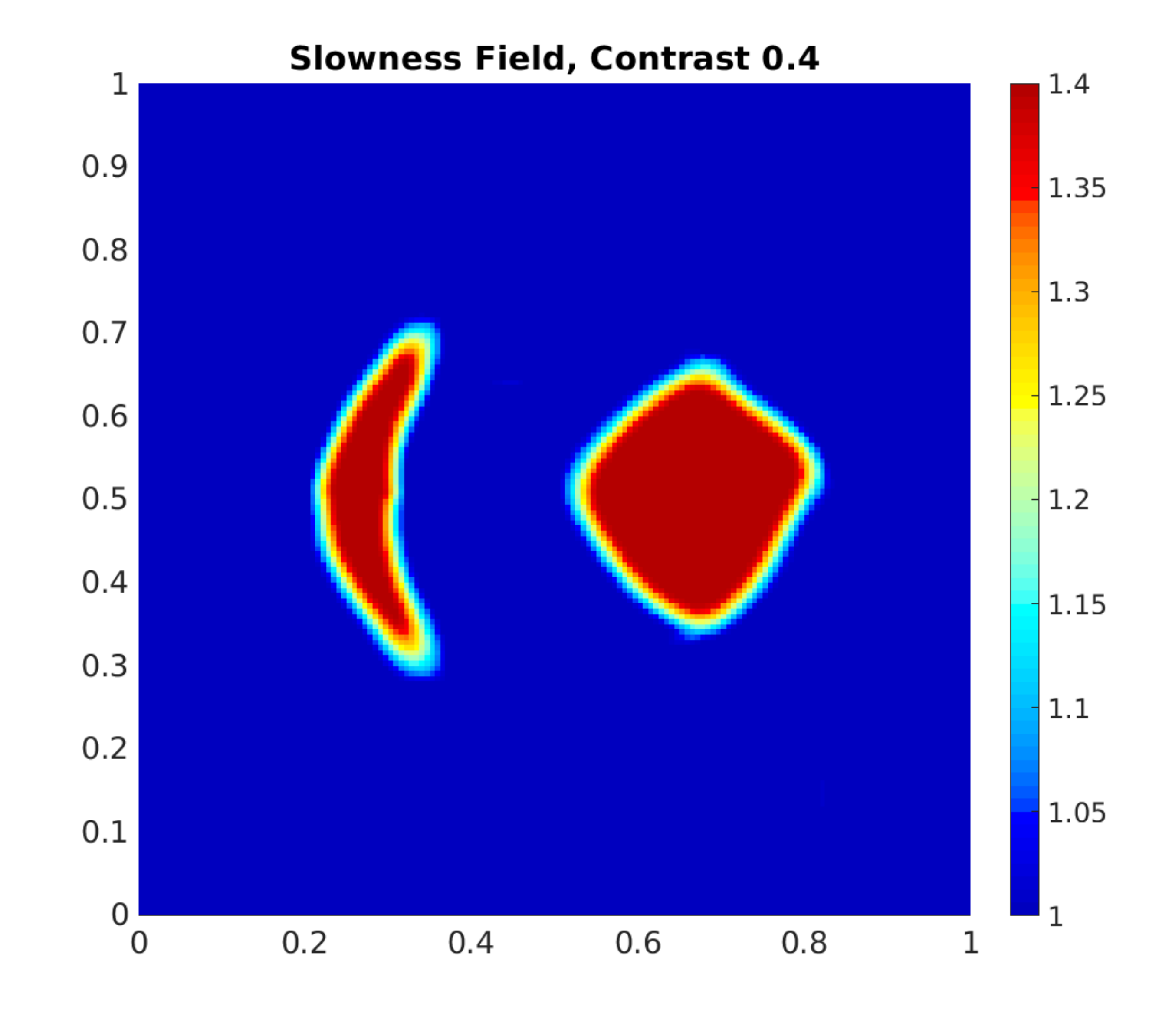}
        \includegraphics[width=0.32\textwidth]{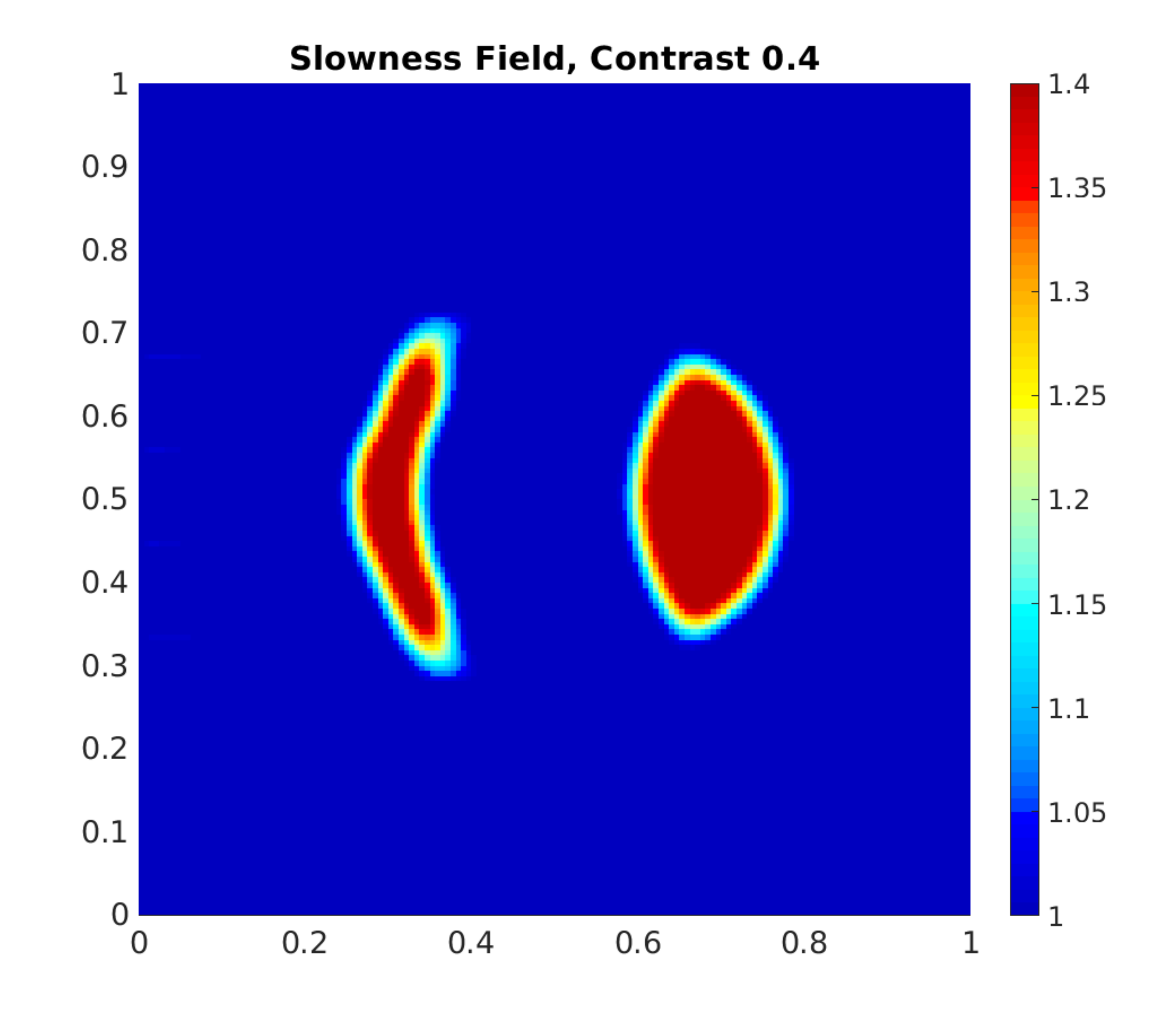}
        
        \caption{\label{fig:circban}Shielded disk recovery. (a) true slowness field $s_{\min}=1, \ s_{\max}=1.4$. (b) slowness field from random configuration. (c) slowness field from wells configuration}
\end{figure}

The different geometries provide a variety of challenges for the first traveltime binary recovery problem. The first recovery is of the circular disk, in \Fref{fig:circ2}. We see good recovery in both configurations for this simple geometry. For more complicated inclusions such as \Fref{fig:splodge} we similarly obtain reasonable recovery so long as the interfacial layer is thin relative to the lengthscale of geometric features. 

We observe two features of the underlying forward problem in \Fref{fig:band}. Firstly we see in locations where wave sources are near to interfaces we obtain geometric asymmetry. Secondly, the scheme is efficient if ray paths travel through relatively homogeneous structures and so we found the recovery in the wells configuration is quickly resolved.

The right angle of \Fref{fig:jagged} is well recovered, and we see the effects of the regularization on the boundary conditions. If one takes $\dD_\hbar = \p\Omega$ then interface positions match the truth (see \Fref{fig:jagged}(b)), but when the set $\{y = 0 \} \cup \{y=1\}$ has Neumann conditions (see \Fref{fig:jagged}(c)) we see interfaces meet domain boundaries at right angles. In this simulation it is vital for $\dD_\hbar \neq \emptyset$ to obtain a correct local minimizer.

The shielded disk of \Fref{fig:circban} performs well in the random configuration. In the wells configuration, the disk is shielded by the crescent inclusion and yet the inverse solver still distinguishes two objects and shapes are well recovered. We note this simulation performed better with contrast $s_{\min} =1$, $s_{\max}=1.4$. 

\begin{rem}  \label{rem:contrast}
There is a maximal contrast ratio of the binary slowness function for which first traveltime tomography is effective. Unless detectors are well placed (for example within every inclusion in the domain) then above this critical ratio, first hitting times will be (almost) independent of the contrast. We display traveltime fields in \Fref{fig:traveltimecontrast}, due to a source at $[0,1/2]$, and the (continuous approximation of the) shielded disk slowness field \Fref{fig:circban}(a), with a fixed binary value $s_{\min}=1$. For high contrasts (\Fref{fig:traveltimecontrast}(b) and (c)), the maximum traveltime occurs within the obstacle thus the wave's first hitting time at $[1,1/2]$ is interpreted as not having a ray path through the obstacle. This loss of information affects the recovery, see \Fref{fig:recovcontrast}. Impenetrable obstacles will be investigated in a forthcoming work.
\end{rem} 

\begin{figure}[ht]
 \centering   
         \includegraphics[width=0.32\textwidth]{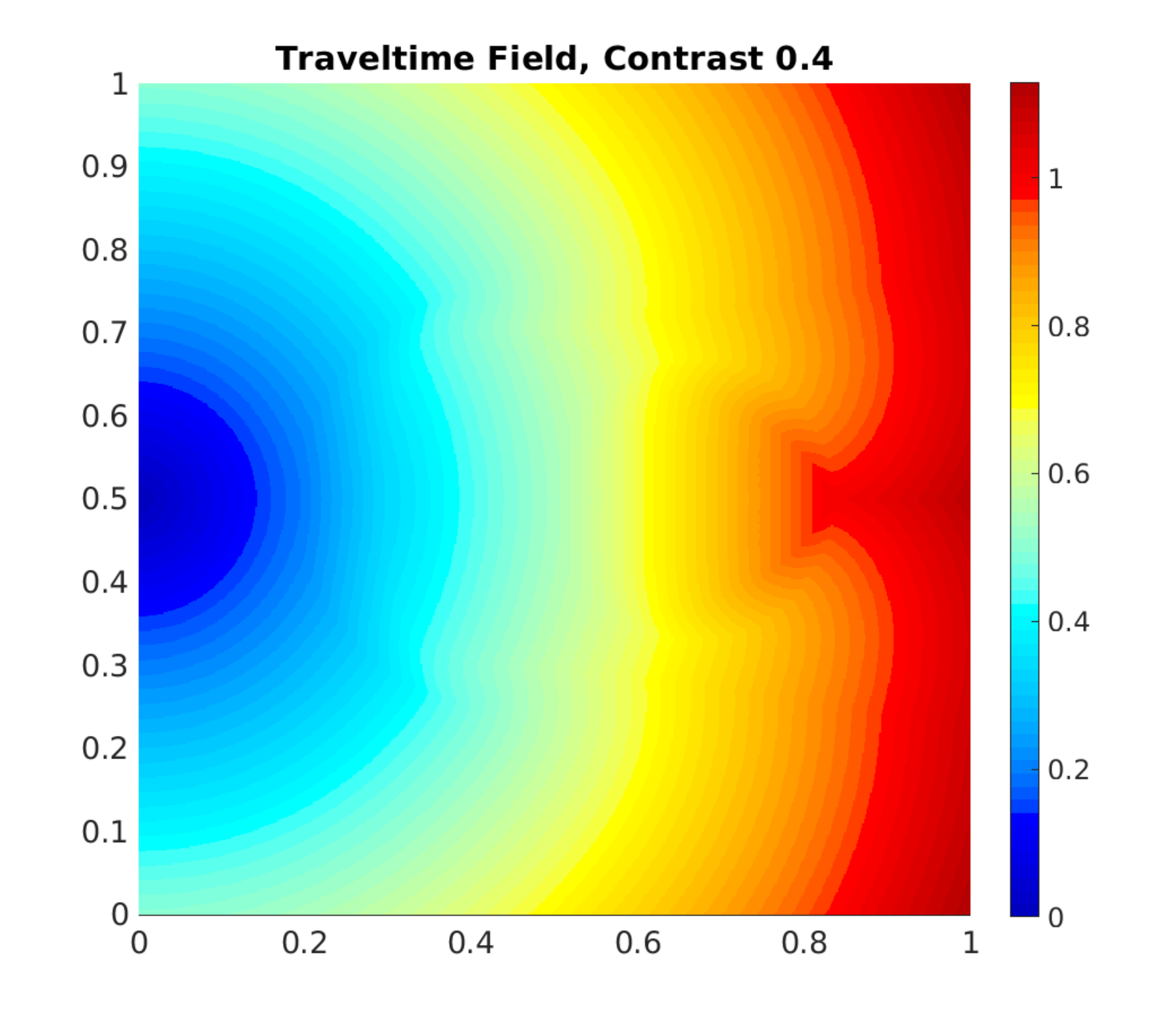}   
       \includegraphics[width=0.32\textwidth]{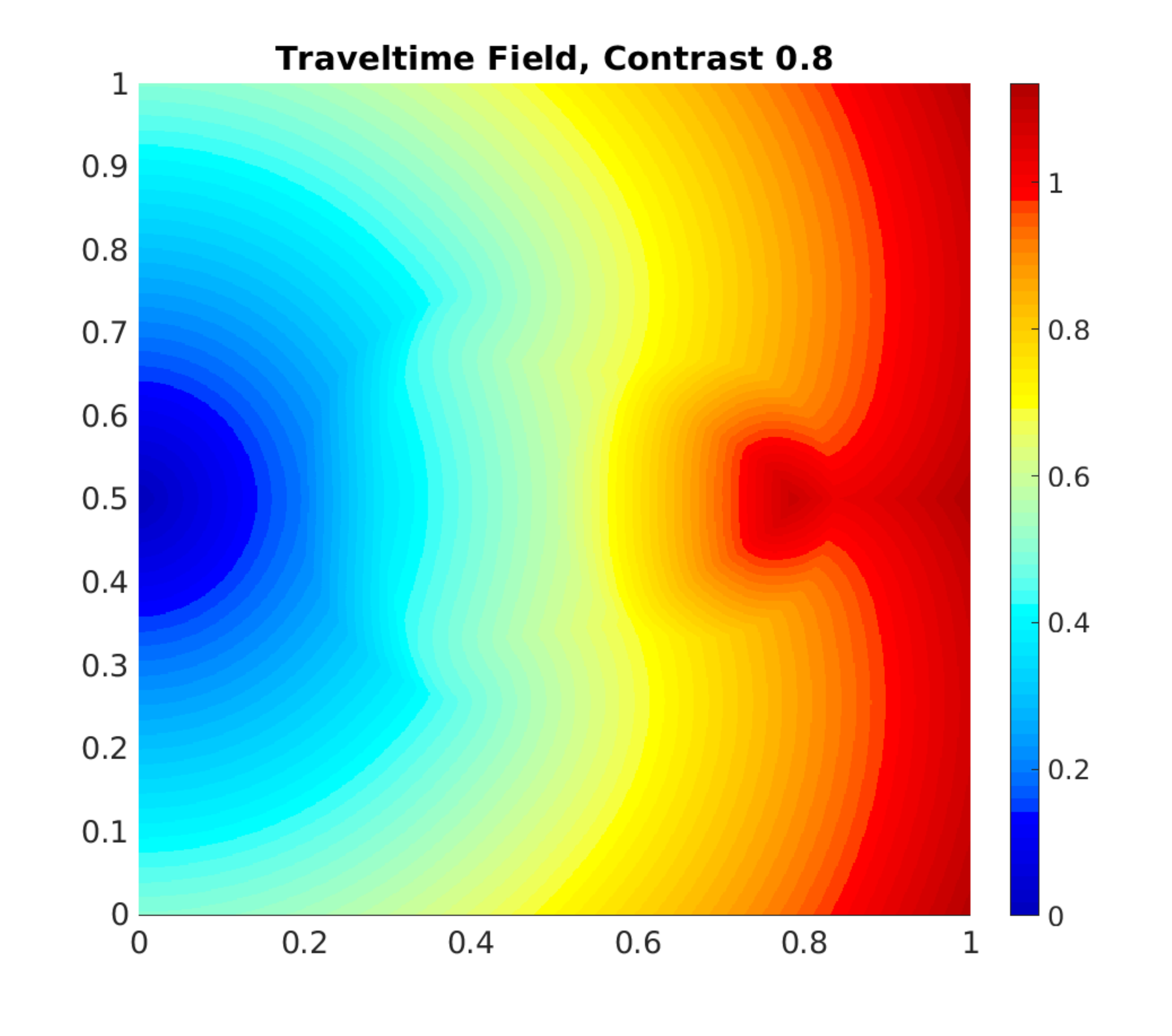}  
       \includegraphics[width=0.32\textwidth]{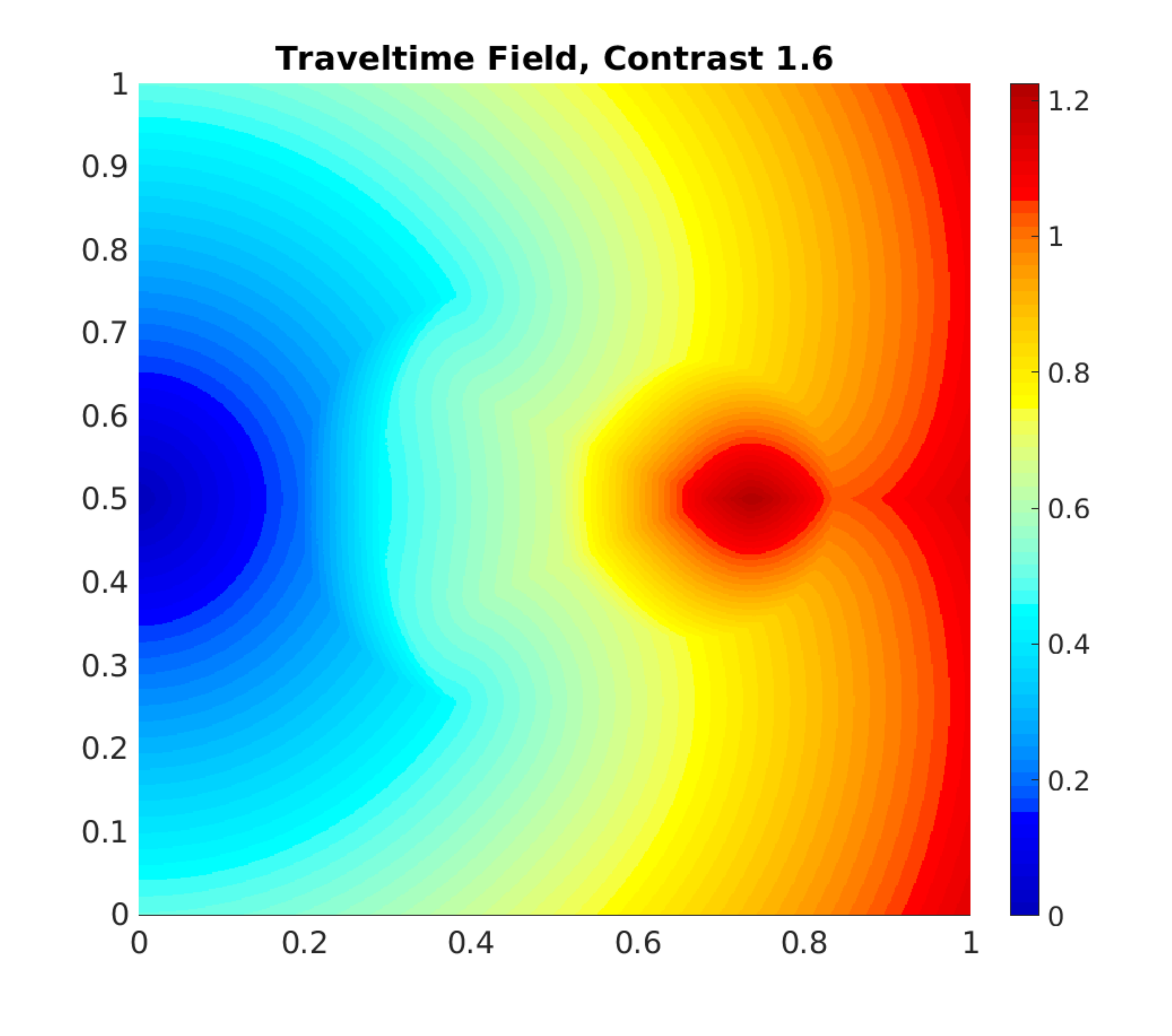}

        \caption{\label{fig:traveltimecontrast}Traveltimes for shielded disk. Source located at $[0,1/2]$ in the domain. (a) to (c): increasing contrast ($(s_{\max}-s_{\min})/s_{\min}$) values 0.4, 0.8, 1.6. Contrast $\geq 0.8$ has maximal traveltime in the domain.}
\end{figure}

\begin{figure}[ht]
 \centering 
      \includegraphics[width=0.32\textwidth]{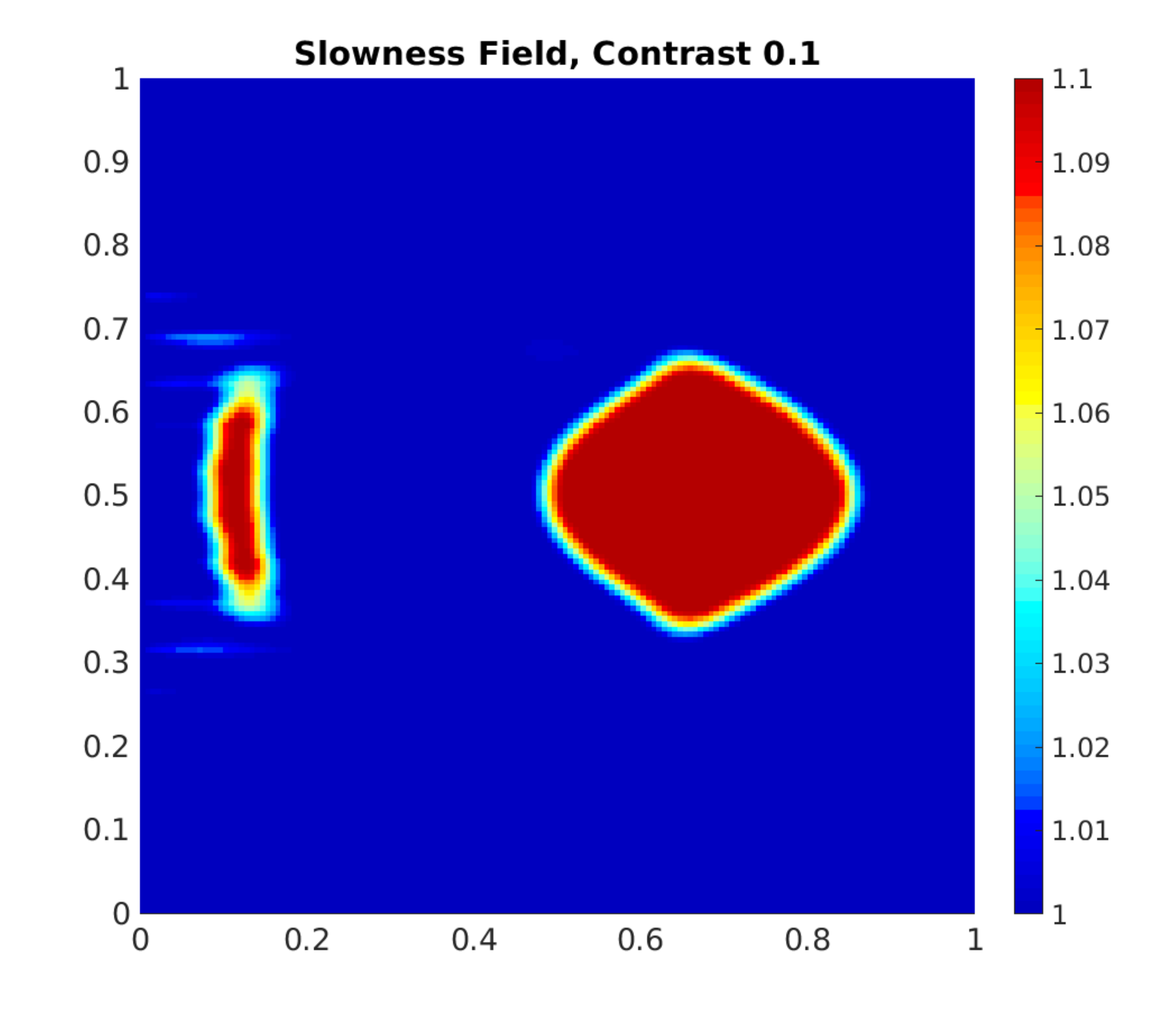}
 \includegraphics[width=0.32\textwidth]{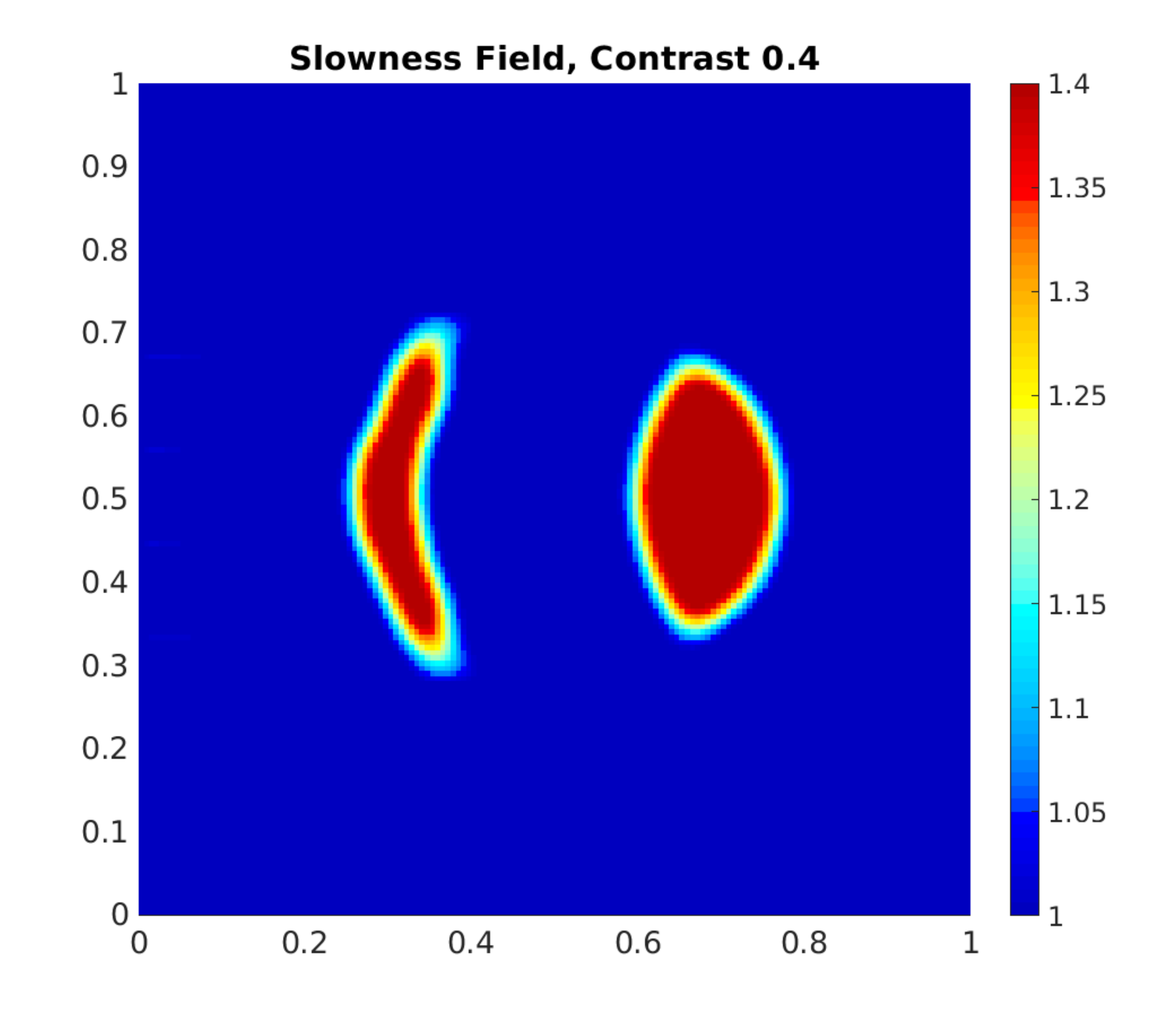}  
 \includegraphics[width=0.32\textwidth]{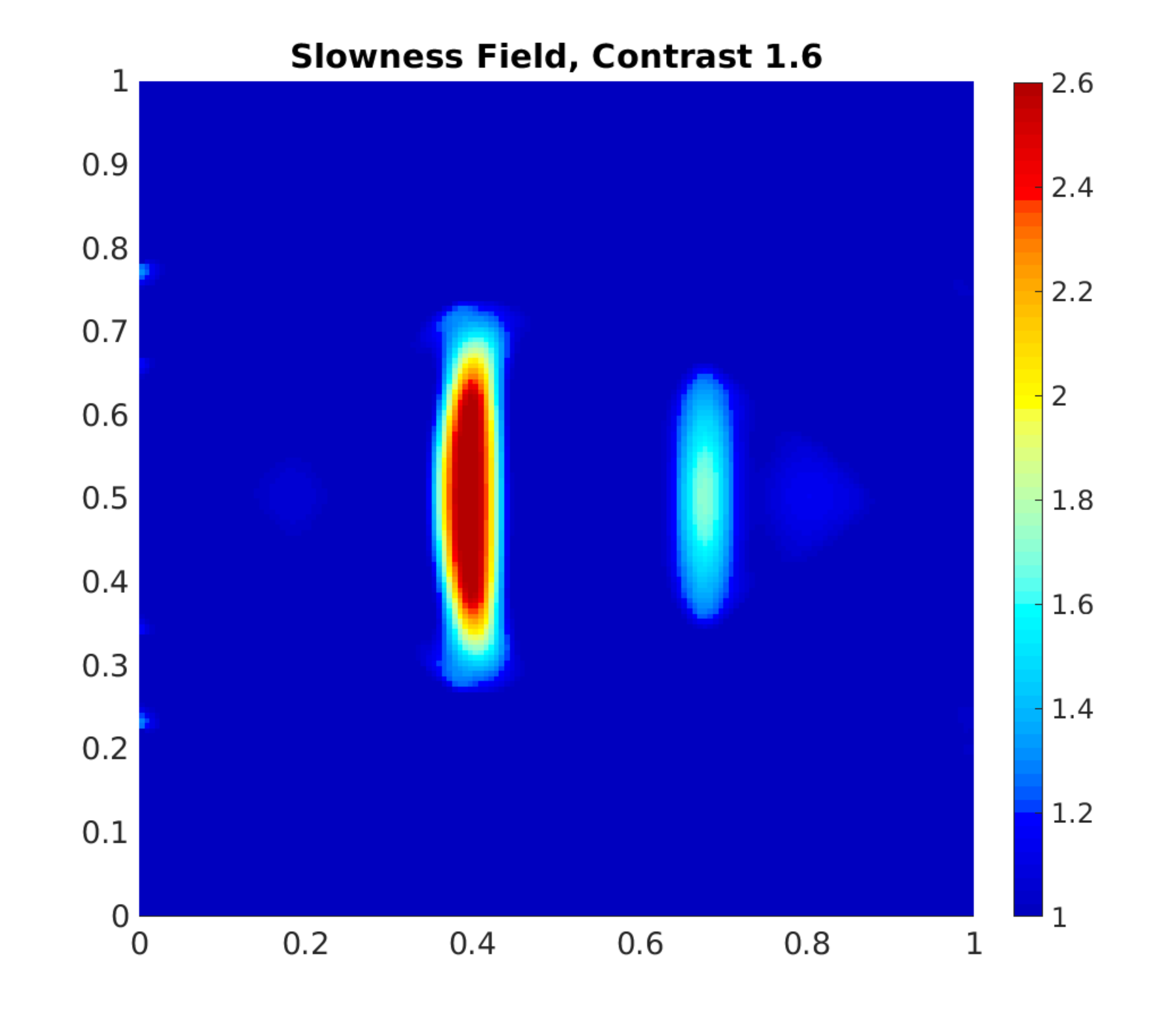}
 \caption{\label{fig:recovcontrast}Different contrast ratio for  shielded disk, with wells configuration and $s_{\min}=1$. (a) $s_{\max}= 1.1$, (b) $s_{\max}=1.4$, (c) $s_{\max} = 2.6$. Situations of too low, acceptable, and too high contrast respectively.  }
 
\end{figure}

\section{Conclusion}
\subsection{Closing remarks}
We have presented a technique for binary recovery based on the phase field methodology with an emphasis of the presence of an underlying mathematical theory.

We proved that the method ensures that the solutions exist, and that the forward problem remains well posed for them. We validated the technique with a $\Gamma-$convergence of the phase field regularization to a perimeter penalization technique. We have created a mixed formulation of the problem, and shown that minimizers of this solve the original problem. We have constructed a convergent discrete formulation relying on a monotone finite difference method for the forward problem and mixed finite element method for the inverse problem. Due to our careful treatment of the derivative we were able to use a descent algorithm and we demonstrated its effectiveness of recovery in many geometries and for different configurations of source -- receiver pairs, including one found in crosswell tomography.

\subsection{Outlook}
The framework we have set up suggests several developments worthy of future study. For example, investigation of a hierarchical model in which $s_{\max}$ is unknown, as  in \cite{LeuLi13}, or if $s_{\max}$ is known up to fluctuations, for example by setting  $s_{\max}(x) = \tilde s_{\max} + \delta f(x)$  for a continuous function $f$   to be determined with  $\delta \ll (\tilde s_{\max} - s_{\min})$. 

A natural extension is to investigate a piecewise constant slowness function formulated with a multi-obstacle potential. Modelling impenetrable obstacles ($s_{\max}\to \infty$) - as noted in Remark \ref{rem:contrast}, could also extend the applicability of the tomography. 

In practice, gravity acceleration data is also collected in geophysical surveys and one can perform a joint inversion for the slowness and subsurface density. It is a viable extension as the slowness and density are structurally related, and will share curves of discontinuity. Inversion is performed a weighted sum of the misfit functionals for slowness and gravity acceleration \cite{LiQia16}.

\section*{Acknowledgments}

The work of CME was partially supported by the Royal Society via a Wolfson Research Merit Award; the work of ORAD and CME by the EPSRC Programme Grant EQUIP.  
\section*{References}

\bibliographystyle{siam}


\appendix
\section{Proof of $\Gamma$ -- convergence}\label{sec:app}

We prove Theorem \ref{thm:gammaconv} by first proving a key result about the transition energy. 

\begin{applem}\label{lem:profsol}
There exists a solution to the minimization problem
 \[
 P^\gamma \coloneqq \stackrel[v \in {\mmm[V]}]{}{\inf} j^\gamma(z) =\stackrel[z \in {\mmm[V]}]{}{\inf} 
 \int_{\R}\frac{1}{2}\gamma (z'')^2 + \frac{1}{2}(z')^2+\Psi(z) \dd x.
\]
\end{applem}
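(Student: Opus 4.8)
The plan is to run the direct method of the calculus of variations, and then to add a regularity/structure argument that forces the minimizer to lie in the class $\mmm[V]$ itself (which is not closed under the natural weak convergence) rather than only in a relaxation of it.

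First I would record that $P^\gamma$ is finite and nonnegative. Nonnegativity is immediate, since on $\{z\in[-1,1]\}$ every term of $j^\gamma$ is nonnegative. Finiteness follows by exhibiting a single competitor $v_0\in\mmm[V]$ of finite energy, e.g. a fixed smooth, odd, nondecreasing profile equal to $\pm1$ outside $[-1,1]$: its integrand is supported in $[-1,1]$ and bounded, so $j^\gamma(v_0)<\infty$. Hence a minimizing sequence $(z_n)\subset\mmm[V]$ with $j^\gamma(z_n)\to P^\gamma$ exists. From $\sup_n j^\gamma(z_n)<\infty$ and $|z_n|\le1$ I obtain uniform bounds on $\|z_n'\|_{L^2(\R)}$ and $\|z_n''\|_{L^2(\R)}$, while oddness pins $z_n(0)=0$. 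On each interval $I_k=[-k,k]$ the sequence is bounded in $H^2(I_k)$, so the compact embedding $H^2(I_k)\hookrightarrow C^1(\overline{I_k})$ together with a diagonal argument yields a subsequence and a limit $z\in H^2_{loc}(\R)$ with $z_n\to z$ in $C^1_{loc}(\R)$ and $z_n''\weakto z''$ in $L^2_{loc}(\R)$. The limit inherits the pointwise constraints $z(0)=0$, $|z|\le1$, $z'\ge0$ and $z(x)=-z(-x)$.

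Lower semicontinuity is handled interval-by-interval: on each $I_k$ the first-derivative and potential terms converge because of the strong $C^1(\overline{I_k})$ convergence and the continuity of $\Psi$ on $[-1,1]$, whereas $\int_{I_k}(z'')^2$ is controlled by weak $L^2(I_k)$ lower semicontinuity; passing $k\to\infty$ monotonically gives $j^\gamma(z)\le\liminf_n j^\gamma(z_n)=P^\gamma$. Since $\int_\R\Psi(z)<\infty$ and $z$ is bounded and monotone, $z(x)\to\pm1$ as $x\to\pm\infty$. The main obstacle is that $z$ need not a priori belong to $\mmm[V]$: a weak $H^2$ limit is only $C^{1,1/2}$, not $C^2$, and the asymptotic condition $z\to\pm1$ is weaker than the required $z\equiv\pm1$ outside a compact set. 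I would resolve both points by passing through the relaxed class $\overline{\mmm[V]}$ of $H^2_{loc}$ functions obeying the same symmetry, monotonicity and pointwise bounds: repeating the compactness and lower-semicontinuity argument with a minimizing sequence for $\inf_{\overline{\mmm[V]}}j^\gamma$ produces a minimizer $z^\ast$ of $j^\gamma$ over $\overline{\mmm[V]}$.

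It remains to upgrade $z^\ast$ into $\mmm[V]$. As a minimizer of a fourth-order double-obstacle problem, $z^\ast$ solves the associated variational inequality; on the open non-coincidence set $\{-1<z^\ast<1\}$ it satisfies the linear Euler--Lagrange ODE $\gamma z''''-z''-z=0$ and is therefore $C^\infty$ there, and the smooth-fit analysis of the transition profile as in \cite{BloEll91} raises $z^\ast$ to $C^2(\R)$. Monotonicity forces the coincidence set $\{z^\ast=1\}$ to be a half-line $[\delta,\infty)$; were the transition never completed, $z^\ast$ would solve $\gamma z''''-z''-z=0$ on an unbounded interval while remaining bounded, but the bounded solutions of this equation are oscillatory or exponentially decaying and cannot tend to $1$, contradicting $z^\ast\to1$. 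Hence $\delta<\infty$, so $z^\ast\in\mmm[V]$, and from $\mmm[V]\subset\overline{\mmm[V]}$ the chain $j^\gamma(z^\ast)=\inf_{\overline{\mmm[V]}}j^\gamma\le P^\gamma\le j^\gamma(z^\ast)$ closes, showing $z^\ast$ attains $P^\gamma$. I expect the $C^2$ regularity across the free boundary and the finiteness of the transition width to be the genuine difficulty; the remaining steps are the routine direct method.
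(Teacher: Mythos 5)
Your argument is essentially correct, but it takes a genuinely different route from the paper. The paper does not run the direct method at all: it writes the Euler--Lagrange variational inequality, observes that on the transition region the profile solves the \emph{linear} fourth-order ODE $\gamma z''''-z''-z=0$ (since $\Psi'(z)=-z$ for the double obstacle), and then solves this ODE in closed form via the characteristic equation $\gamma\lambda^4-\lambda^2-1=0$. The odd solution $\tilde C_1\sinh(\lambda_1^\gamma t)+\tilde C_2\sin(\lambda_2^\gamma t)$ is matched $C^2$ to the constants $\pm1$ at $\pm\delta^\gamma$, with $\delta^\gamma$ determined by the transcendental smooth-fit condition $\lambda_2^\gamma\tan(\lambda_2^\gamma\delta^\gamma)=-\lambda_1^\gamma\tanh(\lambda_1^\gamma\delta^\gamma)$; membership in $\mmm[V]$ is then immediate by construction. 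This explicitness is not incidental: the paper later uses the numerically computed $\delta^\gamma$ to calibrate $\e$ against the mesh size, and $P^\gamma$ to check interfacial lengths, so the closed form is needed downstream. Your direct-method argument buys robustness (it would survive a nonlinear potential with no explicit profile) and, notably, it actually establishes \emph{minimality}, which the paper's construction does not address -- the potential term $\frac{1}{2}(1-z^2)$ is concave in $z$, so exhibiting a critical point of the variational inequality does not by itself show it is the infimum. The price you pay is that the two steps you flag as the genuine difficulty -- $C^2$ regularity across the free boundary of the fourth-order obstacle problem, and finiteness of the transition width -- must be argued; your half-line argument for the latter (no bounded solution of the linear ODE on an unbounded interval can tend to $1$, since the bounded modes are $e^{-\lambda_1 x}$ and oscillations) is sound, while the former is asserted with a citation rather than proved. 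In the paper both come for free from the explicit solution. The two proofs are complementary rather than equivalent.
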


\begin{proof}
For $\gamma>0$, we consider the Euler-Lagrange equations for $j^\gamma$. Due to the obstacle constraint, this can be written as a variational inequality. Find $z\in H^2(\R,[-1,1])$, such that
\[
 \gamma (z'',\eta'' - z'') + (z',\eta' - z') -(z,\eta-z)\geq 0, \qquad \forall\eta \in H^2(\R,[-1,1]).
\]
If we are considering solutions $z \in \mmm[V]\subset C^2(\R)$ then we have $z\in H^4(\R)$ and so by integrating by parts we obtain
\[
(\gamma z'''' - z'' -z,\eta-z)\geq 0,\qquad \forall \eta \in L^2(\R,[-1,1]).
\]
Hence for $z\in\mmm[V]$, we can relate $z$ satisfying the critical point, to solutions $Z$ of the following Fisher-Kolmogorov equation. Let $\delta^\gamma >0$, then find $Z \in C^2(\R)$ such that,  
\begin{equation}\label{eq:FKeqn}
 \gamma Z''''-Z''+\psi(Z)=0  \ \textrm{ on } (-\delta^\gamma,\delta^\gamma),\qquad \textrm{where }  \psi(s)\coloneqq -s,
\end{equation}
subject to the constraints and boundary conditions imposed by $\mmm[V]$:
\[\fl \indent
Z(x<-\delta^\gamma)=-1, \qquad Z(x>\delta^\gamma) = 1,   \qquad Z^{(k)}(-\delta^\gamma) =Z^{(k)}(\delta^\gamma)=0, \ \  \textrm{for } k=1,2.
\]
and $|Z|\leq 1$, and $Z'(x)\geq 0,\  \forall x \in (-\delta^\gamma,\delta^\gamma)$, $Z$ odd. To find this, we look at the characteristic equation: $\gamma \lambda^4 - \lambda^2 -1 = 0$. 
For $\gamma>0$ we have two real $\pm\lambda^\gamma_1$
and two imaginary roots  $\pm\lambda^\gamma_2i$.
\[
 \lambda^\gamma_1 = \sqrt{\frac{1}{2\gamma}\big(1+\sqrt{1+4\gamma}\big) },\qquad \lambda^\gamma_2 = \sqrt{\frac{1}{2\gamma}\big(\sqrt{1+4\gamma}-1\big) }.
\]
Thus, for all $\gamma>0$ we have a general form of solution
\[
 Z^\gamma(x) = C^\gamma_1e^{\lambda^\gamma_{1}x} + C^\gamma_2e^{-\lambda^\gamma_{1}x} + C^\gamma_3\cos(\lambda^\gamma_{2}x)+C^\gamma_4\sin(\lambda^\gamma_{2}x).
\]
The constants $C^{\gamma}_k$ depend on $\delta^\gamma$. A general odd solution must therefore be of the form $Z^\gamma_{odd}(x) = \tilde C^\gamma_1 \sinh(\lambda_1x) + \tilde C^\gamma_2 \sin(\lambda_2 x)$. Using the boundary conditions, one finds coefficients of the form:
\[
  \tilde C^\gamma_1 =\frac{(\lambda_2^\gamma)^2}{((\lambda_1^\gamma)^2+(\lambda_2^\gamma)^2)\sinh(\lambda^\gamma_1\delta^\gamma)}, \qquad \tilde C^\gamma_2 =\frac{(\lambda_1^\gamma)^2}{((\lambda_1^\gamma)^2+(\lambda_2^\gamma)^2)\sin(\lambda^\gamma_2\delta^\gamma)}.
\]
We require solutions to be $C^2(\R)$, so we seek $\delta^\gamma$ so that $(U^\gamma)''(\delta^\gamma) =0$. This holds where 
\begin{equation}\label{eq:delta}
  \lambda^\gamma_2 \tan(\lambda^\gamma_2 \delta^\gamma)= -\lambda^\gamma_1\tanh(\lambda^\gamma_1\delta^\gamma).
\end{equation}
The first positive solution of \eref{eq:delta} is $\delta^\gamma = \frac{\pi}{2\lambda_2}+\alpha$, for some $\alpha \in (0,\frac{\pi}{2\lambda_2})$, and it is unique over the interval range.

To summarize we have a found a unique strictly monotonic solution for the 1D extended obstacle Fisher-Kolmogorov equation. It is bounded on $[-1,1]$ and for $\gamma>0$, can be extended to a function $z^\gamma\in C^2(\R; [-1,1])$ by 
\[
 z^\gamma(t) \coloneqq \cases{
                      -1,&  if $ t<- \delta^\gamma,$\\
                      \tilde C^\gamma_1 \sinh(\lambda^\gamma_1 t ) + \tilde C^\gamma_2 \sin(\lambda^\gamma_2 t), &  if $ -\delta^\gamma \leq t \leq \delta^\gamma,$\\
                      1, &  if  $t>\delta^\gamma$.
                     }
\]
and so $z^\gamma \in \mmm[V]$. 
\end{proof}

\begin{apprem}
 As $\gamma \to 0$, we have $\delta^\gamma \searrow \frac{\pi}{2}$ and $\tilde C^\gamma_2 \to 1$ and $\tilde C^\gamma_1 \to 0$ and therefore $z^\gamma(t)$ converges to $\sin(t)$ on $(-\frac{\pi}{2},\frac{\pi}{2})$. This is as expected as the fourth order problem reduces to the second order problem seen in \cite{BloEll91}.
\end{apprem}

To prove Theorem \ref{thm:gammaconv}, we must prove two inequalities. In \cite{HilPelSch02}, the authors have a complete proof for the double well potential, so we provide detail of where the double obstacle theory differs.
\begin{applem}(liminf inequality) 
 Given a sequence $\{u_\e\}$ with $u_\e \to u$ as $\e \to 0$ strongly in $L^1(\Omega)$, then 
 \[
 \mmm[J]^\gamma_0\leq \stackrel[\e\to0]{}{\lim\inf} \mmm[J]^\gamma_\e(u_\e).
  \]
\end{applem}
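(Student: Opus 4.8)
The plan is to follow the blow-up method of Fonseca and Müller, adapting the double-well argument of \cite{HilPelSch02} and substituting for its profile the compactly supported obstacle profile $z^\gamma$ constructed in Lemma \ref{lem:profsol}.

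First I would dispose of the trivial case: if $\liminf_{\e\to0}\mmm[J]^\gamma_\e(u_\e)=+\infty$ there is nothing to prove, so I pass to a subsequence (not relabelled) along which the liminf is attained and finite, say bounded by $C$. Finiteness of the energy forces $u_\e\in[-1,1]$ a.e.\ by the definition of $\Psi$ in \eref{eq:doubobs}, and the bound $\frac1\e\int_\Omega\Psi(u_\e)\le C$ gives $\int_\Omega\Psi(u_\e)\to0$; since $u_\e\to u$ in $L^1(\Omega)$ and $\Psi\ge0$ vanishes only at $\pm1$, this yields $u\in\{-1,1\}$ a.e. It remains to show (i) that $u\in BV(\Omega,\{-1,1\})$ and (ii) the sharp lower bound; I would extract both from the blow-up analysis, the point being that if $u\notin BV$ the same analysis returns $+\infty$, contradicting the assumed bound.

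The core step is the blow-up. I would introduce the nonnegative Radon measures $\mu_\e$ whose densities are the integrand of $\mmm[J]^\gamma_\e(u_\e)$ and, up to a further subsequence, take a weak-$*$ limit $\mu_\e\weakto\mu$, so that $\mu(\Omega)\le\liminf_\e\mmm[J]^\gamma_\e(u_\e)$. Writing $S_u$ for the jump set of $u$ (which is $(d-1)$-rectifiable with measure-theoretic normal $\nu$), the goal is the density estimate $\frac{d\mu}{d(\mathcal H^{d-1}\text{ on }S_u)}(x_0)\ge P^\gamma$ for $\mathcal H^{d-1}$-a.e.\ $x_0\in S_u$. Fixing such an $x_0$, rescaling around it at the natural length scale $\e$ and using that the blow-up of $u$ is the pure jump from $-1$ to $1$ across the hyperplane $\{y\cdot\nu=0\}$, I would reduce the lower bound to a one-dimensional transition problem in the direction $\nu$. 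By lower semicontinuity of the convex one-dimensional integrand and the characterization of $P^\gamma$ as the minimal transition energy over $\mmm[V]$ (Lemma \ref{lem:profsol}), each blow-up limit costs at least $P^\gamma$ per unit interfacial area. Integrating over $S_u$ then gives $\mu(\Omega)\ge P^\gamma\,\mathcal H^{d-1}(S_u)=\frac12 P^\gamma\int_\Omega|\nabla u|\,\dd x=\mmm[J]^\gamma_0(u)$, using that a $\{-1,1\}$-valued $BV$ function has total variation equal to twice the interfacial area.

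The hardest step will be the one-dimensional reduction of the fourth-order term: unlike $|\nabla u_\e|^2$, the quantity $(\lap u_\e)^2$ does not slice cleanly, since $\lap u_\e$ mixes the normal second derivative $\p_{\nu\nu}u_\e$ with transverse and cross derivatives, and one cannot simply bound $(\lap u_\e)^2$ below by $(\p_{\nu\nu}u_\e)^2$. Following \cite{HilPelSch02}, I would control this by working in coordinates adapted to $\nu$ after blow-up and arguing that the transverse contributions are asymptotically negligible, so that only the normal profile survives and contributes exactly $j^\gamma$. Two genuine differences from the double-well case are worth isolating: the obstacle potential makes compactness free of charge, since $|u_\e|\le1$ automatically with no truncation needed, whereas the optimal profile $z^\gamma$ now reaches $\pm1$ on a \emph{compact} interval $[-\delta^\gamma,\delta^\gamma]$ rather than only asymptotically, so the matching of the blown-up limit to the constants $\pm1$ is exact and the free boundary $\{|z^\gamma|=1\}$ must be accommodated through the variational-inequality form of the Euler--Lagrange equation used in Lemma \ref{lem:profsol} rather than through the differential equation itself.
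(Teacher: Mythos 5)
Your overall strategy is consistent with the paper's: the authors dispose of this lemma in one line by citing the liminf proof of \cite[Proposition~3.2]{HilPelSch02} and observing that the only property of the optimal profile it uses is oddness (supplied by Lemma~\ref{lem:profsol}), so in substance both you and they outsource the hard fourth-order slicing estimate to the same reference. Your reconstruction of the blow-up skeleton, and your identification of the two obstacle-specific modifications (compactness is free since $|u_\e|\le 1$ a.e.\ whenever the energy is finite; the profile saturates $\pm1$ on a compact interval, so the matching to the constants is exact), are accurate and would be the right way to write the argument out in full.

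There is, however, one step as you describe it that does not work: you propose to extract $u\in BV(\Omega,\{-1,1\})$ from the blow-up analysis itself, ``the point being that if $u\notin BV$ the same analysis returns $+\infty$.'' This is circular. The density estimate of $\mu$ with respect to $\mathcal{H}^{d-1}$ restricted to the jump set $S_u$ is formulated on a $(d-1)$-rectifiable set with a measure-theoretic normal, and these objects are only available once you already know that $\{u=1\}$ is a set of finite perimeter; the blow-up machinery cannot by itself rule out non-$BV$ limits. The standard (and necessary) preliminary is the Modica--Mortola chain-rule estimate, which uses only the first-order part of the energy: setting $\Phi(t)\coloneqq\int_{-1}^{t}\sqrt{2\Psi(s)}\dd s$ for $t\in[-1,1]$, one has
\[
\frac{\e}{2}|\nabla u_\e|^2+\frac{1}{\e}\Psi(u_\e)\ \ge\ \sqrt{2\Psi(u_\e)}\,|\nabla u_\e|\ =\ |\nabla(\Phi\circ u_\e)|,
\]
so the energy bound gives $\sup_\e\|\Phi\circ u_\e\|_{BV(\Omega)}<\infty$; lower semicontinuity of the total variation along the $L^1$-convergent sequence, together with the fact that $\Phi$ is a continuous strictly increasing bijection of $[-1,1]$ with $\Phi(-1)\neq\Phi(1)$ and that $u\in\{-1,1\}$ a.e., then yields $u\in BV(\Omega,\{-1,1\})$. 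This works verbatim for the double obstacle potential \eref{eq:doubobs} (indeed slightly more cleanly, since no truncation of $u_\e$ is needed). With that inserted before the blow-up, the rest of your outline goes through, modulo the fourth-order one-dimensional reduction that you correctly flag as the delicate point and defer to \cite{HilPelSch02}, exactly as the paper does.
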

\begin{proof}
This follows immediately from the proof of \cite[Proposition 3.2]{HilPelSch02}, which does not explicitly rely upon the double well or obstacle, we only require the profile solution ($z^\gamma$) is odd, as shown in Lemma \ref{lem:profsol}.
\end{proof}

\begin{applem}(limsup inequality) 
 For any $u \in L^1(\Omega)$, there exists a sequence $\{u_\e\}$ such that:
 \begin{enumerate}
  \item $u_\e \to u$ as $\e \to 0$ strongly in $L^1(\Omega)$.
  \item $\mmm[J]^\gamma_0(u) \geq \stackrel[\e\to0]{}{\lim\sup}\mmm[J]^\gamma_\e(u_\e)$.
 \end{enumerate}
\end{applem}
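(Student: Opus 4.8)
The plan is to run the standard recovery-sequence construction of Modica--Mortola type in the second-order setting of \cite{HilPelSch02}, taking advantage of the fact that the optimal profile $z^\gamma$ from Lemma \ref{lem:profsol} is \emph{compactly supported} in its transition (it equals $\pm1$ exactly outside $[-\delta^\gamma,\delta^\gamma]$), which simplifies both the $L^1$ convergence and the localization of the energy compared with the double-well case. First I would dispose of the trivial case: if $u\in L^1(\Omega)\setminus BV(\Omega,\{-1,1\})$ then $\mathcal{J}^\gamma_0(u)=\infty$ and the constant sequence $u_\e=u$ satisfies (i) and (ii) vacuously. So assume $u\in BV(\Omega,\{-1,1\})$ and write $E=\{u=1\}$, a set of finite perimeter with $\int_\Omega|\nabla u|=2\,\mathrm{Per}(E;\Omega)$, so that $\mathcal{J}^\gamma_0(u)=P^\gamma\,\mathrm{Per}(E;\Omega)$. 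I would first treat the case in which $\Sigma:=\partial E\cap\Omega$ is a $C^2$ hypersurface. Letting $d(x)$ be the signed distance to $\Sigma$ (as in Remark \ref{rem:gamma1}), which is $C^2$ with $|\nabla d|=1$ in a tubular neighbourhood, I set $u_\e(x):=z^\gamma(d(x)/\e)$. Since $z^\gamma\equiv\pm1$ off $[-\delta^\gamma,\delta^\gamma]$, $u_\e$ agrees with $u$ outside the shrinking tube $\{|d|\le\e\delta^\gamma\}$, giving $u_\e\to u$ in $L^1(\Omega)$ immediately, with no tail estimates required.

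Next I would compute the energy. Using $\nabla u_\e=\e^{-1}(z^\gamma)'(d/\e)\nabla d$ and $\Delta u_\e=\e^{-2}(z^\gamma)''(d/\e)+\e^{-1}(z^\gamma)'(d/\e)\Delta d$, the square of the Laplacian term expands, after multiplication by $\tfrac{\gamma\e^3}{2}$, into a leading part $\tfrac{\gamma}{2\e}((z^\gamma)'')^2$, a cross term $\gamma(z^\gamma)''(z^\gamma)'\Delta d$, and a term $\tfrac{\gamma\e}{2}((z^\gamma)')^2(\Delta d)^2$, where $\Delta d$ is controlled by the principal curvatures of the level sets near $\Sigma$. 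Applying the coarea formula $\int_{\mathrm{tube}}g\,\dd x=\int_{-\e\delta^\gamma}^{\e\delta^\gamma}\!\int_{\{d=t\}}g\,\dd\mathcal{H}^{d-1}\,\dd t$ and rescaling $t=\e\tau$, the leading contributions of the gradient, potential and second-order terms combine to
\[
\Big(\int_{-\delta^\gamma}^{\delta^\gamma}\tfrac{\gamma}{2}((z^\gamma)'')^2+\tfrac12((z^\gamma)')^2+\Psi(z^\gamma)\,\dd\tau\Big)\,\mathcal{H}^{d-1}(\Sigma)=j^\gamma(z^\gamma)\,\mathrm{Per}(E;\Omega)=P^\gamma\,\mathrm{Per}(E;\Omega),
\]
using $\mathcal{H}^{d-1}(\{d=t\})\to\mathcal{H}^{d-1}(\Sigma)=\mathrm{Per}(E;\Omega)$ as $\e\to0$ and the optimality $j^\gamma(z^\gamma)=P^\gamma$ from Lemma \ref{lem:profsol}. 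I then check that the remaining terms vanish: after the same change of variables the cross and curvature-square corrections each carry an extra power of $\e$ relative to the leading term, so over the tube of width $O(\e)$ they contribute $O(\e)$ and $O(\e^2)$ respectively. Hence $\limsup_{\e\to0}\mathcal{J}^\gamma_\e(u_\e)=P^\gamma\,\mathrm{Per}(E;\Omega)=\mathcal{J}^\gamma_0(u)$, which is even stronger than the required inequality (ii).

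Finally, to reach an arbitrary finite-perimeter $E$, I would invoke the standard approximation of sets of finite perimeter by smooth sets (\cite{book:Fed96}): choose $E_k$ with $C^2$ boundary such that $\mathbf 1_{E_k}\to\mathbf 1_E$ in $L^1(\Omega)$ and $\mathrm{Per}(E_k;\Omega)\to\mathrm{Per}(E;\Omega)$, so that the associated binary functions $u_k$ satisfy $u_k\to u$ in $L^1$ and $\mathcal{J}^\gamma_0(u_k)\to\mathcal{J}^\gamma_0(u)$. Combining the smooth-case recovery sequences with a diagonal extraction over the indices $(k,\e)$ produces a single sequence $u_\e\to u$ in $L^1(\Omega)$ with $\limsup_{\e\to0}\mathcal{J}^\gamma_\e(u_\e)\le\mathcal{J}^\gamma_0(u)$. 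The main obstacle is the second-order (Laplacian) term: one must verify that its curvature-dependent corrections genuinely localize to $\Sigma$ and vanish in the limit, and confirm that the profile entering the energy is precisely the minimizer of $j^\gamma$. The explicit, compactly supported profile $z^\gamma$ of Lemma \ref{lem:profsol} makes both the localization and the $L^1$ convergence transparent, and this is exactly the point where the double-obstacle argument is cleaner than the double-well treatment of \cite{HilPelSch02}.
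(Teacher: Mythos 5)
Your construction is essentially the paper's own proof: both take the recovery sequence $u_\e = z^\gamma(d(\cdot)/\e)$ built from the compactly supported optimal profile of Lemma \ref{lem:profsol}, reduce to sets with smooth boundary (the paper cites Modica's construction where you spell out the density-plus-diagonal argument), and localize the energy to an $O(\e)$ tube via the coarea formula, with the curvature cross terms disposed of by order counting where the paper uses a weighted Young inequality to absorb them into a $(1+\nu)$ factor. These are stylistic differences only; the argument is correct and matches the paper's.
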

\begin{proof}

 We follow the proof of \cite{HilPelSch02}: Let $u \in L^1(\Omega)$, we must construct a sequence $\{u_\e\}$ such that $\lim_{\e\to0} u_\e = u$ in $L^1(\Omega)$ and 
 \[
  \stackrel[\e\to0]{}{\lim\sup} \mmm[J]^\gamma_\e ( u_\e ) \leq \mmm[J]^\gamma_0(u).
 \]
Due to constructions in \cite{Mod87} there exists a set $D \subset\R^d$ open and bounded with $\p D \in C^\infty$ and $\mmm[H]^{d-1} ( \p D \cap \p\Omega) = 0$, such that 
\[
 u = \chi_D - \chi_{\R^d \setminus D}.
\] 
Now, Let $U\in \mmm[V]$ minimize the functional $P^\gamma$, such a function exists due to Lemma \ref{lem:profsol}. and so $ P^\gamma = j^\gamma(U)$. With the specific form for $u$, (and $|\nabla\cdot|$ in the sense of total variation), we rewrite the limit $\mmm[J]^\gamma_0(u)$ 
\begin{equation}\label{eq:J0}
 \mmm[J]^\gamma_0(u) = \frac{1}{2}P^\gamma \int_\Omega |\nabla u| = P^\gamma \int_\Omega |\nabla \chi_D| = j^\gamma(U) \mmm[H]^{d-1}(\p D \cap \Omega).
\end{equation}
Let $d$ be the signed distance function to $\p D$, 
\[
 d(x) = \cases{
         \inf_{y \in \p D} |x-y|, &  if $ x \in D$,\\
         -\inf_{y \in \p D} |x-y|, &  if $ x \not\in D$.
        }
\]
There exists a neighbourhood $N_h$ of width $h$ to $\p D$ where $d$ is $C^2(N_h)$, and we define a function $\eta\colon \bar \Omega \to \R$
\[
\eta(x) =   d(x), \ \textrm{ if }  x \in N_h, \qquad  |\eta(x)|\geq h  \ \textrm{ if } x \not \in N_h.\\ 
\]
We may extend outside of $N_h$ to ensure $\eta \in C^2(\bar \Omega)$.
As mentioned in Remark  \ref{rem:gamma1}, a useful rescaling is defined through the following sequence:
\[
 u_\e(x) = U\left(\frac{\eta(x)}{\e}\right), \qquad x \in \Omega.
\]
We see that $u_\e\in C^2(\bar \Omega)$ and $\lim_{\e \to 0} u_\e (x)= u(x)$ $\forall x \in \Omega$ in $L^1(\Omega)$ by construction of $U \in \mmm[V]$. By dominated convergence theorem this converges in $L^1(\Omega)$. We must now show that this sequence provides the limsup inequality of the Lemma. The chain rule leads to 
\begin{eqnarray}\fl \indent
 \mmm[J]^\gamma_\e(u_\e) = \frac{1}{\e} \left(\int_ {\Omega \cap N_h} + \int_{\Omega\setminus N_h} \right)& \frac{1}{2}\gamma \Big|U''\left(\frac{\eta}{\e}\right)|\nabla \eta|^2 + \e U'\left(\frac{\eta}{\e}\right)\lap \eta\Big|^2 \nonumber\\
& +\frac{1}{2} \Big|U'\left(\frac{\eta}{\e}\right)\Big|^2|\nabla \eta|^2 + \Psi \Big(U \left(\frac{\eta}{\e}\right)\Big) \dd x. \label{eq:Jgamma}
\end{eqnarray}
By construction, $U(z) = 1\textrm{ or } -1 $ for $|z| > \delta^\gamma$, and so for $\e$ small enough, the integral in \eref{eq:Jgamma} over $\Omega \setminus N_h$ is $0$ and so this term is done. For the other integral, by construction $\eta = d$ here and so $|\nabla \eta|=1$, thus $|\lap \eta|\leq C_\eta$. 
\begin{eqnarray*}
|U''(z) + \e C_\eta U'(z)|^2 &= |U''(z)|^2 + 2\e C_\eta |U'(z) U''(z)|+  C^2_\eta\e^2|U''(z)|^2, 
\end{eqnarray*}
and we apply Young's inequality to the second term with weight $\frac{\nu}{2\gamma}$, where $\nu>0$: 
\begin{eqnarray*}\fl \indent
|U''(z) + \e C_\eta U'(z)|^2&\leq |U''(z)|^2+\frac{\nu}{\gamma}|U'(z)|^2+ \Big(\frac{\gamma(2\e C_\eta)^2}{\nu}+C^2_\eta\e^2 \Big)|U''(z)|^2 \\
&\leq (1+\nu)|U''(z)|^2+\frac{\nu}{\gamma}|U'(z)|^2 + C(\nu)\e^2,
\end{eqnarray*}
for constant $C(\nu)$. Notice that after passing to the limit $\e \to 0$, we could take $\nu>0$ as arbitrarily small without blowup. We may now bound the integral in \eref{eq:Jgamma} by 
\begin{eqnarray*}\fl 
 \mmm[J]^\gamma_\e \leq\ \frac{(1+\nu)}{\e} \int_{\Omega \cap N_h} \Big(\frac{1}{2} \gamma |U''\left(\frac{\eta}{\e}\right)|^2 + \frac{1}{2}|U'\left(\frac{\eta}{\e}\right)|^2 + \Psi(U(\omega(x))) \Big) \dd x+ C(\nu)\e\\ \fl 
 \phantom{\mmm[J]^\gamma_\e}=\  (1+\nu) \int_{\Omega \cap N_h} \Big(\frac{1}{2} \gamma |U''(\omega(x))|^2 + \frac{1}{2}|U'(\omega(x))|^2 + \Psi(U(\omega(x))) \Big) |\nabla \omega(x)| \dd x + C(\nu)\e,\\
 \end{eqnarray*}
 where $\omega(x) = \frac{d(x)}{\e}$. Note we have used $|\nabla \omega(x)| = \frac{1}{\e}$, then using the co-area formula \cite[Theorem 3.2.12]{book:Fed96} on $t = \omega(x)$ we obtain
 \begin{eqnarray*}\fl 
 \mmm[J]^\gamma_\e \ \leq& (1+\nu) \int_\R\int_{\omega^{-1}(t)\cap\Omega \cap N_h} \Big(\frac{1}{2} \gamma |U''(\omega(x))|^2 + \frac{1}{2}|U'(\omega(x))|^2 + \Psi(U(\omega(x))) \Big) |\nabla \omega(x)| \dd x\\ \fl
 &\ + C(\nu)\e.
\end{eqnarray*}
 Now we wish to rewrite these integrals. Firstly $\omega^{-1}(t) = \{ x \ | \ \frac{d(x)}{\e} = t \}$, so
\[
 \omega^{-1}(t) \cap N_h = \cases{
  \{ x \ | \ \frac{d(x)}{\e} = t \}, & if $t\leq\frac{h}{\e},$ \\
  \emptyset, & if  $t>\frac{h}{\e}$. 
 }
\]
We may now restate the limits.
\begin{eqnarray*}\fl 
 \mmm[J]^\gamma_\e  \leq\ (1+\nu) \int^{\frac{h}{\e}}_{- \frac{h}{\e}}\int_{d(x) = \e t} \Big(\frac{1}{2} \gamma |U''(t)|^2 + \frac{1}{2}|U'(t)|^2 + \Psi(U(t)) \Big) \dd \mmm[H]^{d-1}(x) \dd t \\ \fl 
 \phantom{\mmm[J]^\gamma_\e \leq} \ + C(\nu)\e\\ \fl 
 \phantom{\mmm[J]^\gamma_\e }=\ (1+\nu) \int^{\frac{h}{\e}}_{- \frac{h}{\e}}\Big(\frac{1}{2} \gamma |U''(t)|^2 + \frac{1}{2}|U'(t)|^2 + \Psi(U(t)) \Big) \dd \mmm[H]^{d-1}\{x\ |\ d(x) = \e t\} \dd t \\ \fl
 \phantom{\mmm[J]^\gamma_\e \leq}\ + C(\nu)\e.
\end{eqnarray*}
As $\e \to 0$, this converges to 
\[
 (1+\nu) j^\gamma(U) \dd \mmm[H]^{d-1}\{x\ |\ d(x) = 0\} = (1+\nu) P^\gamma \mmm[H]^{d-1}(\p D \cap \Omega) .
\]
Therefore we have shown that, in view of \eref{eq:J0},
\[
 \stackrel[\e\to0]{}{\lim\sup}(\mmm[J]^\gamma_\e(u_\e)) \leq (1+\nu)P^\gamma \mmm[H]^{d-1}(\p D \cap \Omega) = (1+\nu) \mmm[J]^\gamma_0(u),
\]
where the choice of $\nu$ may be arbitrarily small. Hence the limsup inequality is satisfied
\end{proof}

With both inequalities established, the proof of Theorem \ref{thm:gammaconv} is complete.  

\qed
\end{document}